\numberwithin{equation}{section}
\numberwithin{equation}{section}
\newcommand{\BE}{{\mathbb{E}}}
\newcommand{\BCov}{{\mathbb{C}\mathrm{ov}}}
\newcommand{\BVar}{{\mathbb{V}\mathrm{ar}}}
\newtheorem{lemma}{Lemma}
\newtheorem{theorem}{Theorem}
\newtheorem{remark}{Remark}[section]
\title{\textbf{Assessing bivariate independence: Revisiting Bergsma's covariance}}
\date{May 29, 2023}
\begin{document}
		\maketitle
	\def\shortauthors{ABC}
	\title{\textbf{\Large \sc
	}}
	\author{
		\parbox[t]{0.28\textwidth}{{Divya Kappara}
			\\ {\small 	School of Math. \& Stat. \\
				      	University of Hyderabad\\  
			}}
		\parbox[t]{0.28\textwidth}{{Arup Bose}
			\\ {\small 	Stat.-Math. Unit, Kolkata\\
					Indian Statistical Institute\\
			}}
		\parbox[t]{0.35\textwidth}{{{Madhuchhanda Bhattacharjee}
			\\ {\small School of Math. \& Stat. \\
				        University of Hyderabad\\
			}}
		}
	}

\begin{abstract} 
\noindent Bergsma (2006) proposed a covariance $\kappa (X,Y)$ between random variables $X$ and $Y$. He derived their asymptotic distributions under the null hypothesis of independence between $X$ and $Y$. The non-null (dependent) case does not seem to have been studied in the literature.
		
We derive several alternate expressions for $\kappa$. One of them leads us to a very intuitive estimator of $\kappa (X, Y)$ that is a nice function of four naturally arising $U$-statistics. We derive the exact finite sample relation between all three estimates.

The asymptotic distribution of our estimator, and hence also of the other two estimators, in the non-null (dependence) case, is then obtained by using the $U$-statistics central limit theorem. 

For specific parametric bivariate distributions, the value of $\kappa$ can be derived in terms of the natural dependence parameters of these distributions. In particular, we derive the formula for $\kappa$ when $(X,Y)$ are distributed as Gumbel's bivariate exponential.

We bring out various aspects of these estimators through extensive simulations from several prominent bivariate distributions. In particular, we investigate the empirical relationship between $\kappa$ and the dependence parameters, the distributional properties of the estimators, and the accuracy of these estimators. We also investigate the powers of these measures for testing independence, compare these among themselves, and with other well known such measures. Based on these exercises, the proposed estimator seems as good or better than its competitors both in terms of power and computing efficiency.

\end{abstract}

{\footnotesize \noindent \textbf{Keywords:} Bergsma's covariance, eigenvalues, $U$ and $V$-statistic, degenerate $U$-statistics, measures of dependence, distance covariance, powers of tests of independence.}
\vskip10pt
{\footnotesize \noindent\textbf{AMS Classification} Primary 62G05, Secondary 62E20.}

\section{Introduction}\label{sec:intro} 

\cite{ bergsma2006new} introduced a new covariance $\kappa:=\kappa(X,Y)$, and correlation $\rho:=\rho(X,Y)= \kappa(X,Y)/\sqrt{\kappa (X,X) \kappa(Y,Y)}$, between variables $X$ and $Y$. Suppose $F_i$, $i=1, 2$ are the marginal distributions of $X$ and $Y$ respectively.	Let $(X_i, Y_i)$, $i=1, 2$ be i.i.d. copies of $(X, Y)$. Then $\kappa=\mathbb{E}[h_{F_{1}}(X_1,X_2)h_{F_{2}}(Y_1,Y_2)]$ for some kernel function $h_{F_i}$, $i=1, 2$ which is defined later.

It is known that $\kappa=0$ if and only if $X$ and $Y$ are independent. Based on $n$ i.i.d. observations $(X_i, Y_i)$, $1\leq i \leq n$, Bergsma provided two estimates $\tilde \kappa$ and $\hat \kappa$ of $\kappa$,  which are $U$ and $V$ statistics with \textit{estimated} kernels $h_{\hat F_{1}}$ and $h_{\hat F_{2}}$. Using the theory of degenerate $U$-statistics,  their limit distributions were obtained under independence of $X$ and $Y$. In Section \ref{sec:bergsma} we recall these developments. 

It appears that the limit distributions of these statistics under dependence are not available in the literature. The functional form of $\kappa$ has also not been provided for any standard parametric family. We provide few examples of such derivations enabling us to understand this new concept of correlation in the context of such parametric families of distributions.
	
In Section \ref{sec:altberg}, we first derive several alternate expressions for $\kappa$. One of them leads us to a very intuitive estimator of $\kappa$ that is based on four $U$-statistics which we define. We also derive the exact algebraic relation between the three estimates and the four $U$-statistics.  

The exact value of true $\kappa$ is hard to calculate for most distributions. For the non-null case, we give detailed derivations leading to simplified formulae for $\kappa$ for the bivariate exponential and the bivariate normal families of distributions as examples in Section \ref{sec:altberg}. We further prove the convexity of $\kappa$ under bivariate normal, and under bivariate exponential $\kappa$ is a non-decreasing function of $\theta$.  

Next we construct a framework for estimating $\kappa$. The asymptotic normality of our estimator, and hence that of Bergsma's estimators, then follow by using the $U$ statistics central limit theorem and are presented in Section \ref{sec:asymp}. In the null case, this limit is degenerate. With a higher scaling, the non-degenerate limit distribution of the three estimators are obtained using the theory of degenerate $U$-statistics. The necessary background in the asymptotic distribution for $U$-statistics is covered in Appendix \ref{sec:ustat}.
	
For the null asymptotic distribution, one needs the infinite sequence of eigenvalues of kernels which depend on the marginal distributions of $X$ and $Y$. Again, these cannot be obtained explicitly, and we show how to compute them approximately. In Appendix  \ref{sec:ev} we give the procedure to compute the eigenvalues required for the limit distribution under independence under different distributional assumptions.

In Section \ref{sec:simu}, we present some simulations and computational aspects of the estimators, under various dependence scenarios as portrayed by different bivariate distributions. See Appendix \ref{sec:bvd} for details. We also compare the performances of these estimators for testing independence, among themselves and with other established measures of dependency. See \cite{ma2022}, \cite{chatterjee2021} and \cite{tjostheim2022} for more information on, and further discussions of, such methods.

Under independence/degeneracy, we investigate the empirical distributions of the estimates of $\kappa$ when the data is from specific dependent bivariate distributions, and compare these with the theoretical limit distribution (see Section \ref{subsec:independent}). Note that the computation of the limit distributions involve eigenvalues of certain kernels which depend on the underlying marginal distributions of $X$ and $Y$. 

We explore the relationship between $\kappa(\theta)$ and the natural dependence parameter (say $\theta$) through the three non-parametric estimates of $\kappa$. The changes in the distributional behavior for the three estimators with the six
bivariate distributions, under varying degree of dependence is presented in Section \ref{subsec:dependent}.

For the cases where the exact relationship between $\kappa$ and $\theta$ are available, we explore the qualities of the estimators in capturing the true value, by 
varying the sample size, the value of the dependence parameter, etc.

Measures of dependence are, quite commonly, also used to test independence. We carry out extensive simulations, emulating various scenarios, to study the performance of the three $\kappa$ based measures among themselves, as well as with other known measures from this perspective. We present these results in Section \ref{subsec:power}.

We have also explored computational efficiency aspects of these estimated measures from samples, and details are presented in Section \ref{subsec:comptime}.

In Section \ref{sec:disc} we conclude by taking a critical look at the estimators and their properties. We also discuss some future directions for potential exploration.

\section{Bergsma's covariance and correlation}\label{sec:bergsma}

\cite{ bergsma2006new} introduced a covariance $\kappa$ and the corresponding correlation coefficient $\rho$, which is defined whenever the variables have finite second moments. It can be defined as follows. 

Suppose that $Z$, $Z_1$ and $Z_2$ are i.i.d.~real valued random variables with distribution function $F$ which has finite mean. Let
\begin{equation}
h_F(z_1,z_2):=-\frac{1}{2}\mathbb{E}\big(|z_1-z_2|-|z_1-Z_2|-|Z_1-z_2|+|Z_1-Z_2|\big).\label{hFBrgsm}
\end{equation}

Note that 
\begin{equation}\label{eq:iidcase}
\mathbb{E}h_F(Z_1,Z_2)=0\ \ \text{if}\ \ Z_1, Z_2\ \ \text{are i.i.d.} \ \ F.
\end{equation}

Now, let $F_1$ and $F_2$ be the marginal distributions of a bivariate random variable $(X,Y)$ which have finite variances. Let $(X_1,Y_1)$ and $(X_2,Y_2)$ be i.i.d.~copies of $(X,Y)$. Then Bergsma's covariance and correlation between $X$ and $Y$ are defined respectively by
\begin{equation}
\kappa(X,Y) :=\mathbb{E}\big[h_{F_{1}}(X_1,X_2)h_{F_{2}}(Y_1,Y_2)\big], \ \ \text{and}\ \ \rho(X,Y) :=\frac{\kappa(X,Y)}{\sqrt{\kappa(X,X)\kappa(Y,Y)}}. 
\label{kappa}
\end{equation}

It is known that, $\kappa \geq 0$ and $0 \leq \rho\leq1$. Further, $\kappa=0$ if and only if $X$ and $Y$ are independent, and $\rho=1$ iff $X$ and $Y$ are linearly related, either positively or negatively.

We now describe the two estimates of $\kappa$ given by \cite{ bergsma2006new}. Suppose we have $n$ observations $(x_i,y_i)$, $1\leq i \leq n$ from a bivariate distribution $F_{12}$. We first define the sample analogues of the kernel functions $h_{F_1}$ and $h_{F_2}$. 	Let 
\begin{eqnarray*}
A_{1(x_i)}&:=&\frac{1}{n}\sum_{k=1}^{n}|x_i-x_k|, 	\hspace{0.5cm}A_{2(y_i)}:=\frac{1}{n}\sum_{k=1}^{n}|y_i-y_k|,\\ 
B_{1}&:=&\frac{1}{n}\sum_{i=1}^{n}A_{1(x_i)},  	\hspace{1.4cm}B_{2}:=\frac{1}{n}\sum_{i=1}^{n}A_{2(y_i)}.
\end{eqnarray*}

Let $\hat{F}_1$ and $\hat{F}_2$ be the empirical distribution functions (edfs) of $\{x_i\}$ and $\{y_i\}$ respectively.  
Then the sample analogues of the kernel functions $h_{F_1}$ and $h_{F_2}$ are defined as: 
\begin{eqnarray*}
{h}_{\hat{F}_1}(x_i,x_j) &:=&-\frac{1}{2}\left(|x_i-x_j|-A_{1(x_i)}-A_{1(x_j)}+B_1\right),\\
{h}_{\hat{F}_2}(y_i,y_j)&:=&-\frac{1}{2}\left(|y_i-y_j|-A_{2(y_i)}-A_{2(y_j)}+B_2\right).
\end{eqnarray*}
\cite{bergsma2006new} defined the following $V$-statistics type estimator of $\kappa$: 
\begin{equation}
\hat{\kappa}:=\frac{1}{n^2}\sum_{i,j=1}^{n}{h}_{\hat{F}_1}(x_i,x_j){h}_{\hat{F}_2}(y_i,y_j).
\label{kappav}
\end{equation}

Note that the kernels ${h}_{\hat{F}_i}$, $i=1,2$ depend on the entire sample, and hence $\hat{\kappa}$ is not a $V$-statistic in the usual sense. A second estimate of $\kappa$ was defined by Bergsma as a $U$-statistic type estimator, using a slightly different kernel function.
Let
\begin{eqnarray*}
\tilde{h}_{\hat{F}_1}(x_i,x_j)&:=&-\frac{1}{2}\big(|x_i-x_j|-\frac{n}{n-1}A_{1(x_i)}-\frac{n}{n-1}A_{1(x_j)}+\frac{n}{n-1}B_1\big),\\
\tilde{h}_{\hat{F}_2}(y_i,y_j)&:=&-\frac{1}{2}\big(|y_i-y_j|-\frac{n}{n-1}A_{2(y_i)}-\frac{n}{n-1}A_{2(y_j)}+\frac{n}{n-1}B_2\big).
\end{eqnarray*}

Then Bergsma's $U$-statistics type estimate of $\kappa$ is 
\begin{equation}
\tilde{\kappa}:={n \choose 2}^{-1}\sum_{1\leq i < j \leq n}\tilde{h}_{\hat{F}_1}(x_i,x_j)\tilde{h}_{\hat{F}_2}(y_i,y_j).
\label{kappau}
\end{equation}

Again, the above is not exactly a $U$-statistics. The corresponding estimators of $\rho$ are then:
\begin{eqnarray}
\hat{\rho}(x,y)&:=&\frac{\hat{\kappa}(x,y)}{\sqrt{\hat{\kappa}(x,x)\hat{\kappa}(y,y)}},\label{rhov}\\
\tilde{\rho}(x,y)&:=&\frac{\tilde{\kappa}(x,y)}{\sqrt{\tilde{\kappa}(x,x)\tilde{\kappa}(y,y)}}.\label{rhou}
\end{eqnarray}

Note that $\hat{\kappa}$ is non-negative, but $\tilde{\kappa}$ may be negative. Moreover, if $X$ and $Y$ are independent, then $\BE[\tilde{\kappa}]=0$. \cite{bergsma2006new} obtained the asymptotic distributions of $\tilde{\kappa}$ and $\hat{\kappa}$ only under the independence of $X$ and $Y$ (that is, when $\kappa=0$), using the well known Theorem \ref{theo:standard_degen_u} on degenerate $U$-statistics given in Appendix \ref{sec:ustat}.
We shall give a more detailed proof later. We shall also deal with the case where $X$ and $Y$ are not independent.

\begin{theorem}[\cite{bergsma2006new}]\label{kappatildadegen}
Suppose the observations $\{X_i\}$ and $\{Y_i\}$ are independent with distributions $F_1$ and $F_2$ respectively. Suppose 
$\BE_{F_{1}}[ h^2_{F_1}(X_1, X_2)]+ 
\BE_{F_{2}}[ h^2_{F_2}(Y_1, Y_2)]
< \infty$,
and
\begin{equation*}
	h_{F_1}(x_1, x_2)=\sum_{k=0}^{\infty}\lambda_kg_{1k}(x_1)g_{1k}(x_2), \  \text{and}\ \ 
	h_{F_2}(y_1,y_2)=\sum_{k=0}^{\infty}\eta_k g_{2k}(y_1)g_{2k}(y_2), 
\end{equation*}
where the equality is in the $L^2$ sense. 
Then  
\begin{eqnarray}\label{eqn:tildekapplimit}
	n\tilde{\kappa}\xrightarrow D \sum_{i,j=0}^{\infty}\lambda_i\eta_j(Z_{ij}^2-1),
\end{eqnarray}
and
\begin{eqnarray}\label{eqn:hatkapplimit}
	n\hat{\kappa}\xrightarrow D \sum_{i,j=0}^{\infty}\lambda_i\eta_jZ_{ij}^2,
\end{eqnarray}
where \{$Z_{ij}$\} are i.i.d.~standard normal variables. 
\end{theorem}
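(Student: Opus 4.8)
The plan is to reduce both statistics to genuine degenerate $U$-statistics built from the \emph{true} kernels $h_{F_1}, h_{F_2}$ and then invoke Theorem \ref{theo:standard_degen_u}. First I would dispose of the complication that $\tilde{\kappa}$ and $\hat{\kappa}$ are formed from the \emph{estimated} kernels $h_{\hat F_i}$, which depend on the entire sample through $A_{1(x_i)}, A_{2(y_i)}, B_1, B_2$. Writing $a_1(x) := \mathbb{E}|x - X|$ and $b_1 := \mathbb{E}|X_1 - X_2|$ (and analogously $a_2, b_2$), the true kernel is $h_{F_1}(x_i, x_j) = -\tfrac{1}{2}\bigl(|x_i - x_j| - a_1(x_i) - a_1(x_j) + b_1\bigr)$, and $h_{\hat F_1}$ is obtained by substituting the empirical averages $A_{1(x_i)}, B_1$ for $a_1, b_1$. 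By a uniform law of large numbers these deviations are $O_p(n^{-1/2})$, and I would show that the difference between the statistic formed with $h_{\hat F_i}$ and the one formed with $h_{F_i}$, after multiplication by $n$, is $o_p(1)$; the $n/(n-1)$ factors appearing in $\tilde h_{\hat F_i}$ are asymptotically negligible bias corrections.

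With the true kernels in hand, the product $\Phi\bigl((x_1,y_1),(x_2,y_2)\bigr) := h_{F_1}(x_1,x_2)\, h_{F_2}(y_1,y_2)$ is symmetric and, under independence of $X$ and $Y$, \emph{degenerate}: since $\mathbb{E}[h_{F_1}(X_1, x_2)] = 0$ by \eqref{eq:iidcase}, the first-order projection $\mathbb{E}[\Phi((X_1,Y_1),(x_2,y_2))]$ factors into a product with a vanishing factor and is therefore zero. Substituting the given Mercer expansions gives
\[
\Phi\bigl((x_1,y_1),(x_2,y_2)\bigr) = \sum_{i,j=0}^{\infty} \lambda_i \eta_j\, \bigl(g_{1i}(x_1) g_{2j}(y_1)\bigr)\bigl(g_{1i}(x_2) g_{2j}(y_2)\bigr),
\]
so the eigenvalues of $\Phi$ are exactly the products $\{\lambda_i \eta_j\}$ with tensor eigenfunctions $g_{1i}\otimes g_{2j}$. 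The hypothesis $\mathbb{E}[h^2_{F_1}] + \mathbb{E}[h^2_{F_2}] < \infty$ forces $\sum_i \lambda_i^2 < \infty$ and $\sum_j \eta_j^2 < \infty$, whence $\sum_{i,j}(\lambda_i\eta_j)^2 < \infty$, guaranteeing square-integrability of $\Phi$ and almost-sure convergence of the limit series. Because $X \perp Y$, the eigenfunctions $\{g_{1i}\otimes g_{2j}\}$ are orthonormal in $L^2(F_1 \times F_2)$ and the associated Gaussian coefficients $Z_{ij}$ are mutually independent standard normals.

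Applying Theorem \ref{theo:standard_degen_u} to the reduced degenerate $U$-statistic with eigenvalues $\{\lambda_i \eta_j\}$ then yields $n\tilde{\kappa} \xrightarrow{D} \sum_{i,j} \lambda_i \eta_j (Z_{ij}^2 - 1)$, which is \eqref{eqn:tildekapplimit}. For the $V$-statistic $\hat\kappa$, the only structural difference from $\tilde\kappa$ is the inclusion of the diagonal terms $i=j$; their scaled contribution converges to $\sum_{i,j} \lambda_i \eta_j\, \mathbb{E}\bigl[g_{1i}^2(X) g_{2j}^2(Y)\bigr] = \sum_{i,j}\lambda_i\eta_j$ by independence and orthonormality, and adding this back exactly cancels the $-1$ per term, producing \eqref{eqn:hatkapplimit}.

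I expect the main obstacle to be the first step. Since the kernels are estimated and depend on the whole sample, $\tilde\kappa$ and $\hat\kappa$ are not genuine $U$- or $V$-statistics, and one must argue carefully that the plug-in corrections $A_{1(x_i)} - a_1(x_i)$ and $B_1 - b_1$ do not survive at the scaling $n$, where a priori they could contribute first-order terms. Establishing this negligibility—rather than the subsequent, essentially mechanical, appeal to the degenerate $U$-statistic theorem—is where the real work lies.
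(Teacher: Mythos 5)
Your overall strategy --- replace the estimated kernels by the true ones, recognize the product kernel $h_{F_1}h_{F_2}$ as degenerate under independence with tensor eigenstructure $\{\lambda_i\eta_j\}$, and invoke Theorem \ref{theo:standard_degen_u} --- is the classical direct route (essentially Bergsma's original one) and is genuinely different from the paper's. The paper instead introduces $\kappa^*=\tfrac{1}{4}\big[U_{12n}+U_{1n}U_{2n}-2U_{3n}\big]$ built from genuine $U$-statistics, proves the degenerate limit for $n\kappa^*$ by an explicit Hoeffding decomposition whose second projection reassembles into $h_{F_1}(x_i,x_j)h_{F_2}(y_i,y_j)$ plus a vanishing product term, and then transfers the result to $\tilde\kappa$ and $\hat\kappa$ through the \emph{exact finite-sample identities} of Lemma \ref{kappacurlstar}; the correction terms there converge to $0$ for $\tilde\kappa$ and to $\tfrac{1}{4}\mu_1\mu_2=\sum_{i,j}\lambda_i\eta_j$ for $\hat\kappa$, which is exactly where the shift between the two limits comes from. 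What the paper's route buys is that it never has to control the plug-in error of the estimated kernels at all.

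That plug-in error is where your argument has a genuine gap. You assert that because $A_{1(x_i)}-a_1(x_i)$ and $B_1-b_1$ are $O_P(n^{-1/2})$, the difference between the statistic formed with $h_{\hat F_i}$ and the oracle statistic is $o_P(1)$ after multiplication by $n$. This does not follow from the order bound alone: writing $h_{\hat F_1}h_{\hat F_2}-h_{F_1}h_{F_2}=(h_{\hat F_1}-h_{F_1})h_{F_2}+h_{F_1}(h_{\hat F_2}-h_{F_2})+(h_{\hat F_1}-h_{F_1})(h_{\hat F_2}-h_{F_2})$, the quadratic term is $O_P(n^{-1})$ per pair and remains $O_P(n^{-1})$ after averaging, i.e.\ exactly of order one at the scaling $n$; the linear terms are of order $n^{-1}$ only because the induced $U$-statistics are themselves degenerate under independence, which must be verified rather than assumed. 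So these corrections are \emph{a priori} first-order at scale $n$ and have to be computed, not crudely bounded --- indeed the analogous diagonal and normalization corrections are precisely what produce the nonzero constant $\sum_{i,j}\lambda_i\eta_j$ separating $n\hat\kappa$ from $n\tilde\kappa$. You correctly flag this step as ``where the real work lies,'' but you supply no mechanism for it, and as stated the step would fail. The remainder of your outline (degeneracy of the product kernel via \eqref{eq:iidcase}, orthonormality and independence of the $Z_{ij}$, square-summability of $\{\lambda_i\eta_j\}$, and the diagonal contribution $\mathbb{E}[h_{F_1}(X,X)h_{F_2}(Y,Y)]=\sum_{i,j}\lambda_i\eta_j$ for the $V$-statistic) is correct.
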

For some illustration of the relations in (\ref{eqn:tildekapplimit}) and (\ref{eqn:hatkapplimit}) see 
Subsection  \ref{subsec:independent}.

\section{Another look at estimating $\kappa$}\label{sec:altberg}	

When $X$ and $Y$ are dependent, we might expect $\hat\kappa$ and  $\tilde\kappa$ to be asymptotically normal, but such results do not seem to be available in the literature. It is also expected that a proof could be built around the $U$-statistics central limit Theorem \ref{UCLT} given in Appendix \ref{sec:ustat}.
However, some effort would be needed to identify the first projection of an  underlying $U$-statistics in the definitions of these estimators. 

We take a slightly different approach. We express $\kappa$ in a different way. That leads to a natural estimate, say $\kappa^*$ of $\kappa$, which is a straightforward function of four simple $U$-statistics that we define. We exploit this to obtain the asymptotic distribution of $\kappa^*$ in the null (dependence) and the non-null 
(independence) cases using Theorems \ref{UCLT} and \ref{theo:standard_degen_u}. When $X$ and $Y$ are dependent,  $\kappa^*$ is asymptotic normal, and when they are independent, $\kappa^*$ with a different scaling, has the same distribution as given in (\ref{eqn:tildekapplimit}). Finally, we relate Bergsma's two estimators $\tilde\kappa$ and $\hat\kappa$, to $\kappa^*$. This helps us to show that they are also asymptotically normal under dependence, while  under independence we recover Theorem \ref{kappatildadegen}.

\subsection{Alternate expression for $\kappa$}

We rewrite the kernel function of  \eqref{hFBrgsm} as follows. Let $Z_1, Z_2$ be i.i.d. with distribution $F$. Define 
\begin{eqnarray}g_F(z)&:=& \BE_F[ |z-Z|], \label{eq:g_F}\\
g(F)&:=&\BE_{F}[|Z_1-Z_2|]=\BE_{F}[g_F(Z)], \label{eq:gF}\\
h_F(z_1, z_2)&=&-\dfrac{1}{2} \big[|z_1-z_2|-g_F(z_1)-g_F(z_2)+g(F)\big].\label{eq:hdef}
\end{eqnarray}
We have from Equation \eqref{kappa}
$$\kappa=\BE_{F_{12}}\big[h_{F_{1}}(X_1, X_2)h_{F_{2}}(Y_1, Y_2)\big].$$
Let $(X_i, Y_i)$ , $i=1, 2$ be i.i.d.~bivariate random variables where $X_1$ and $Y_1$ have distributions $F_1$ and $F_2$ respectively, and joint distribution $F_{12}$. Suppose that 
$\BE_{F_{12}}\big[|X_1Y_1|\big] < \infty$. Define 
\begin{eqnarray*} g_{F_{12}}(x,y)&:=&\BE_{F_{12}} \big[|x-X_1| \  |y-Y_1|\big], \\
g({F_{12}}) &:=& \BE_{F_{12}} \big[ g_{F_{12}} (X_2, Y_2)\big]= \BE_{F_{12}} \big[|X_2-X_1| \  |Y_2-Y_1|\big].
\end{eqnarray*}
Note that 
\begin{equation}
g_{F_{12}}(x,y)=g_{F_{1}}(x)\  g_{F_{2}}(y) \ \ \text{and} \ \ g (F_{12})=g(F_1)\ g(F_2), \ \text{if}\  \ F_{12}=F_1\otimes F_2.\label{ind} 
\end{equation}

For the ease of writing, we shall use the shorthand notation 
$$\mu_{1}:=g(F_1), \ \mu_{2}:=g(F_2), \ \mu_{12}:=g (F_{12}), \ \text{and} \ \ \mu_{3}:=\BE_{F_{12}}\big[g_{F_{1}}(X)g_{F_{2}}(Y)\big].$$

Then $\kappa$ has the following alternate expressions in terms of the above quantities.

\begin{theorem}\label{theo:altkappa} The parameter $\kappa$ can be expressed as:
\begin{eqnarray}\kappa&=&\dfrac{1}{4}
	\big[
	\BE_{F_{12}} \big[g_{F_{12}}(X, Y)]
	-2\BE_{F_{12}}\big[g_{F_{1}}(X)g_{F_{2}}(Y)\big]+\big(\BE_{F_{1}}[ g_{F_{1}}(X)]\big)\big(\BE_{F_{2}}[ g_{F_{2}}(Y)]\big)\big]\nonumber\\
	&=& \dfrac{1}{4}
	\big[\BCov_{F_{12}} \big(|X_1-X_2|, |Y_1-Y_2|\big)-2\BCov_{F_{12}} \big( g_{F_{1}}(X_1),  g_{F_{2}}(Y_1)\big)\big]\nonumber\\
	&=&\dfrac{1}{4}\big[
	\mu_{12}-2\mu_{3}+ \mu_1\mu_2\big]\nonumber\\
		&=&\int[F_{12}(x,y)-F_1(x)F_2(y)]^2dxdy, \label{kappaalt}
	\end{eqnarray} 
where $(X,Y)$ is distributed as $F_{12}$ with marginal distributions $F_1$ and $F_2$ respectively.  In particular  $F_{12}(x,y)=F_1(x) F_2(y)$ for all $x, y$,  if and only if $\kappa=0$.
\end{theorem}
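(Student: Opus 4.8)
The first three displayed forms are equivalent by elementary algebra, so I would dispose of them quickly and reserve the real work for the integral formula \eqref{kappaalt}. Starting from $\kappa=\BE_{F_{12}}[h_{F_1}(X_1,X_2)h_{F_2}(Y_1,Y_2)]$ and substituting \eqref{eq:hdef}, I would expand the product
$$4\,h_{F_1}(X_1,X_2)h_{F_2}(Y_1,Y_2)=\big(|X_1-X_2|-g_{F_1}(X_1)-g_{F_1}(X_2)+\mu_1\big)\big(|Y_1-Y_2|-g_{F_2}(Y_1)-g_{F_2}(Y_2)+\mu_2\big)$$
into sixteen terms and take expectations over the two i.i.d.\ copies. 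Three facts settle every term: $\BE|X_1-X_2|=\mu_1=\BE_{F_1}[g_{F_1}(X)]$ (and its $Y$-analogue); a term coupling both copies through one factor and a single copy through the other, e.g.\ $\BE[|X_1-X_2|\,g_{F_2}(Y_1)]$, collapses to $\mu_3$ after conditioning on the shared copy (using $\BE[|X_1-X_2|\mid X_1]=g_{F_1}(X_1)$); and a term whose two factors depend on disjoint copies factorizes into $\mu_1\mu_2$. Collecting gives $4\kappa=\mu_{12}-2\mu_3+\mu_1\mu_2$, the third line. The covariance line follows from $\BCov(|X_1-X_2|,|Y_1-Y_2|)=\mu_{12}-\mu_1\mu_2$ and $\BCov(g_{F_1}(X_1),g_{F_2}(Y_1))=\mu_3-\mu_1\mu_2$, and the first line is merely the same three quantities written as expectations.

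For the integral formula I would set $H(x,y):=F_{12}(x,y)-F_1(x)F_2(y)=\BCov\big(\mathbf{1}(X\le x),\mathbf{1}(Y\le y)\big)$, so that with two i.i.d.\ copies
$$H(x,y)=\tfrac12\,\BE\big[(\mathbf{1}(X_1\le x)-\mathbf{1}(X_2\le x))(\mathbf{1}(Y_1\le y)-\mathbf{1}(Y_2\le y))\big].$$
The engine is the scalar isometry $|a-b|=\int_{\mathbb{R}}(\mathbf{1}(a\le t)-\mathbf{1}(b\le t))^2\,dt$ together with its polarised (bilinear) version
$$\int_{\mathbb{R}}(\mathbf{1}(a\le t)-\mathbf{1}(b\le t))(\mathbf{1}(c\le t)-\mathbf{1}(d\le t))\,dt=\tfrac12\big(|a-d|+|b-c|-|a-c|-|b-d|\big),$$
obtained from the isometry by the polarisation identity, and whose integrand is compactly supported, hence integrable.

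I would then square $H(x,y)$ using a second independent pair of copies (four i.i.d.\ copies $(X_k,Y_k)$, $k=1,\dots,4$, in all), integrate over $x$ and $y$, and interchange $\int dx\,dy$ with $\BE$ by Fubini. Applying the bilinear identity to the $x$- and $y$-integrals separately turns $\int\!\!\int H^2$ into $\tfrac1{16}\BE\big[(|X_1-X_4|+|X_2-X_3|-|X_1-X_3|-|X_2-X_4|)(\text{same expression in }Y)\big]$. Expanding this product of two four-term alternating sums and classifying the sixteen expectations by how the two index-pairs overlap — an identical pair gives $\mu_{12}$, exactly one shared index gives $\mu_3$ (again by conditioning on the common copy), disjoint pairs give $\mu_1\mu_2$ — the signed tally is $4\mu_{12}-8\mu_3+4\mu_1\mu_2$. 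Thus $\int\!\!\int H^2=\tfrac1{16}\cdot4(\mu_{12}-2\mu_3+\mu_1\mu_2)=\kappa$, matching the third line.

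Finally, the ``if and only if'' is read off the integral form: $\kappa=\int\!\!\int H^2\ge0$ vanishes exactly when $H=0$ Lebesgue-a.e., and since $H$ is right-continuous in each coordinate (a difference of distribution functions) this forces $H\equiv0$, i.e.\ $F_{12}\equiv F_1F_2$; the converse is trivial. The step demanding genuine care, rather than bookkeeping, is the analytic justification: the pieces $\int F_{12}^2\,dx\,dy$ and the like individually diverge, so $H$ must be kept intact throughout, and one must verify under the finite-variance hypothesis both that $\int\!\!\int H^2<\infty$ and that the Fubini interchange is legitimate — which is precisely where the compact support of the four-copy integrand, for each fixed realisation, does the work.
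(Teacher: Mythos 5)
Your proof is correct, and for the identities the paper actually proves it follows essentially the same route: expand $4\,h_{F_1}(X_1,X_2)h_{F_2}(Y_1,Y_2)$ into sixteen terms, reduce each by conditioning on the shared copy (one shared index gives $\mu_3$, disjoint copies factorize to $\mu_1\mu_2$), and collect to get $\kappa=\tfrac14[\mu_{12}-2\mu_3+\mu_1\mu_2]$, with the covariance form following by algebra. Where you genuinely diverge is the integral formula: the paper does not prove it at all — it simply states that ``the last relation appears in Bergsma (2006)'' — whereas you derive it from the isometry $|a-b|=\int(\mathbf{1}(a\le t)-\mathbf{1}(b\le t))^2\,dt$, its polarised form, and a four-copy expansion whose signed tally $\tfrac1{16}(4\mu_{12}-8\mu_3+4\mu_1\mu_2)$ indeed matches $\kappa$ (I checked the sign bookkeeping: the four coincident pairs contribute $+4\mu_{12}$, the eight single-overlap pairs all carry sign $-$, and the four disjoint pairs all carry $+$). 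This buys a self-contained proof, a clean justification of the Fubini interchange (via the compactly supported integrand, which the citation route leaves implicit), and a genuine proof of the ``if and only if'' claim from nonnegativity of $\int H^2$ plus right-continuity of $H$ — a claim the paper also asserts without argument. The only nitpick is that you could state explicitly that $\BE|X_1-X_2|\,|Y_1-Y_2|<\infty$ follows from the finite-variance hypothesis by Cauchy--Schwarz, which is what licenses all the expectations you split.
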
 

\begin{proof}
The last relation appears in \cite{bergsma2006new}. To prove the other relations, note that 
\begin{eqnarray*}
	h_{F_{1}}(X_1, X_2) h_{F_{2}}(Y_1, Y_2)&=&
	\dfrac{1}{4}
	\big(|X_1-X_2|-g_{F_{1}}(X_1)-g_{F_{1}}(X_2)+\mu_{1}\big)\\
	&&\ \ \  \times \big(|Y_1-Y_2|-g_{F_{2}}(Y_1)-g_{F_{2}}(Y_2)+\mu_{2}\big).
\end{eqnarray*}
Opening the product, we have
	\begin{eqnarray*}
	 \kappa&=&\BE[h_{F_1}(X_1,X_2)h_{F_2}(Y_1,Y_2)]\\
	 &=&\dfrac{1}{4}\BE\bigg[
	\Big(|X_1-X_2|-g_{F_{1}}(X_1)-g_{F_{1}}(X_2)+g(F_1)\Big) \times \Big(|Y_1-Y_2|-g_{F_{2}}(Y_1)-g_{F_{2}}(Y_2)+g(F_2)\Big)\bigg]\\
	&=&\frac{1}{4}\big[\BE[|X_1-X_2||Y_1-Y_2|]-\BE[|X_1-X_2|g_{F_2}(Y_1)]-\BE[|X_1-X_2|g_{F_2}(Y_2)]+\BE[|X_1-X_2|g(F_2)]\\&& \ -\BE[|Y_1-Y_2|g_{F_1}(X_1)]+\BE[g_{F_1}(X_1)g_{F_2}(Y_1)]+\BE[g_{F_1}(X_1)g_{F_2}(Y_2)]-\BE[g_{F_1}(X_1)g(F_2)]\\&& \ -\BE[|Y_1-Y_2|g_{F_1}(X_2)]+\BE[g_{F_1}(X_2)g_{F_2}(Y_1)]+\BE[g_{F_1}(X_2)g_{F_2}(Y_2)]-\BE[g_{F_1}(X_2)g(F_2)]\\&& \ -\BE[|Y_1-Y_2|g(F_1)]-\BE[g_{F_2}(Y_1)g(F_1)]-\BE[g_{F_2}(Y_2)g(F_1)]+\BE[g(F_1)g(F_2)]\big].
\end{eqnarray*}
By succesive conditioning we obtain,
\begin{eqnarray*}
	\BE[|X_1-X_2|g_{F_2}(Y_1)]&=&\BE\big[\BE[|X_1-X_2|g_{F_2}(Y_1)\mid (X_1,Y_1)]\big]\nonumber\\ 
	&=&\BE(g_{F_2}(Y_1)g_{F_1}(X_1)). 
\end{eqnarray*}
Similarly,
\begin{eqnarray*}
	\BE[|Y_1-Y_2|g_{F_1}(X_1)]&=&\BE(g_{F_1}(X_1)g_{F_2}(Y_1)).\\
	\BE[|X_1-X_2|g_{F_2}(Y_2)]&=&\BE(g_{F_2}(Y_2)g_{F_1}(X_2)).\\
	\BE[|Y_1-Y_2|g_{F_1}(X_2)]&=&\BE(g_{F_1}(X_2)g_{F_2}(Y_2)).\\
\end{eqnarray*}
Also,
\begin{eqnarray*}
	\BE[|X_1-X_2|g(F_2)]&=&	\BE\big[\BE[|X_1-X_2|g(F_2)\mid (X_1,Y_1)]\big]\nonumber\\ 
	&=&\BE(g(F_2)g_{F_1}(X_1))\\ &=&g(F_1)g(F_2).
\end{eqnarray*}
similarly,
\begin{equation*}
	\BE[|Y_1-Y_2|g(F_1)]=g(F_1)g(F_2).
\end{equation*}
Since $(X_i,Y_i) ,i,j=1,2$ are i.i.d we get
$$\BE[g_{F_1}(X_2)g_{F_2}(Y_1)]=\BE[g_{F_1}(X_1)g_{F_2}(Y_2)]=g(F_1)g(F_2).$$
By the above equations we can write,
\begin{eqnarray}
	\kappa
	&=&\frac{1}{4}\big[\BE[|X_1-X_2||Y_1-Y_2|]-\BE[g_{F_1}(X_1)g_{F_2}(Y_1)]-\BE[g_{F_1}(X_2)g_{F_2}(Y_2)]{+g(F_1)g(F_2)}\nonumber\\ && -{\BE[g_{F_1}(X_1)g_{F_2}(Y_1)]+\BE[g_{F_1}(X_1)g_{F_2}(Y_1)]}{+\BE[g_{F_1}(X_1)g_{F_2}(Y_2)]}{-g(F_1)g(F_2)}\nonumber\\&& \ -{\BE[g_{F_1}(X_2)g_{F_2}(Y_2)]}{+\BE[g_{F_1}(X_2)g_{F_2}(Y_1)]}+{\BE[g_{F_1}(X_2)g_{F_2}(Y_2)]}{-g(F_1)g(F_2)}\nonumber\\&& \ +{g(F_1)g(F_2)}{-g(F_1)g(F_2)}{-g(F_1)g(F_2)+g(F_1)g(F_2)}\big].\nonumber
\end{eqnarray}
Upon cancellations we will be left with the terms
\begin{eqnarray}
	\kappa&=&\frac{1}{4}\big[\BE[|X_1-X_2||Y_1-Y_2|]-\BE[g_{F_1}(X_1)g_{F_2}(Y_1)]-\BE[g_{F_1}(X_2)g_{F_2}(Y_2)]\nonumber\\&& \ + \ \BE[g_{F_1}(X_2)g_{F_2}(Y_1)]+\BE[g_{F_1}(X_1)g_{F_2}(Y_2)]-g(F_1)g(F_2)\big]\nonumber\\
	&=&\frac{1}{4}\big[\BE[|X_1-X_2||Y_1-Y_2|]-\BE[g_{F_1}(X_1)g_{F_2}(Y_1)]-\BE[g_{F_1}(X_2)g_{F_2}(Y_2)]\nonumber\\&& \ + \ g(F_1)g(F_2)+g(F_1)g(F_2)-g(F_1)g(F_2)\big]\nonumber\\
	&=&\frac{1}{4}\big[\BE[|X_1-X_2||Y_1-Y_2|]-\BE[g_{F_1}(X_1)g_{F_2}(Y_1)]-\BE[g_{F_1}(X_2)g_{F_2}(Y_2)]+g(F_1)g(F_2)\big]\nonumber\\ 
	&=&\frac{1}{4}\big[\BE[g_{F_{12}}(X_1,Y_1)]-2\BE[g_{F_1}(X_1)g_{F_2}(Y_1)]+g(F_1)g(F_2)\big] \nonumber\\
	&=&\frac{1}{4}\big[g(F_{12})-2\BE[g_{F_1}(X)g_{F_2}(Y)]+g(F_1)g(F_2)\big]\nonumber\\
	&=&\frac{1}{4}\big[\mu_{12}-2\mu_{3}+\mu_{1}\mu_{2}\big].\nonumber
\end{eqnarray}

\noindent Further the second relation is obtained as follows
\begin{eqnarray*}\kappa&=&\dfrac{1}{4}
	\Big[
	\BE g_{F_{12}}(X_1, Y_1)
	-2\BE\big[g_{F_{1}}(X_1)g_{F_{2}}(Y_1)\big]+\big(\BE g_{F_{1}}(X_1)\big)\big(\BE g_{F_{2}}(Y_1)\big)\Big]\\
	&=& \dfrac{1}{4}
	\Big[
	\BE g_{F_{12}}(X_1, Y_1)
	-\BE\big[g_{F_{1}}(X_1)g_{F_{2}}(Y_1)\big]
	-\big(\BE\big[g_{F_{1}}(X_1)g_{F_{2}}(Y_1)\big]-\big(\BE g_{F_{1}}(X_1)\big)\big(\BE g_{F_{2}}(Y_1)\big)\big)\Big]\\
	&=& \dfrac{1}{4}
	\Big[
	\BE g_{F_{12}}(X_1, Y_1)
	-\big(\BCov \big( g_{F_{1}}(X_1),  g_{F_{2}}(Y_1)\big)+\BE g_{F_{1}}(X_1)\BE g_{F_{2}}(Y_1)\big)\big)
	-2\BCov \big( g_{F_{1}}(X_1),  g_{F_{2}}(Y_1)\big)\Big]\\
	&=& \dfrac{1}{4}
	\Big[\BE[|X_1-X_2||Y_1-Y_2|]- \BE[|X_1-X_2|]\BE[|Y_1-Y_2|]-2\BCov \big( g_{F_{1}}(X_1),  g_{F_{2}}(Y_1)\big)\Big]\\
	&=& \dfrac{1}{4}
	\Big[\BCov \big(|X_1-X_2|, |Y_1-Y_2|\big)-2\BCov \big( g_{F_{1}}(X_1),  g_{F_{2}}(Y_1)\big)\Big].
\end{eqnarray*}
%
This completes the proof.
\end{proof}
\noindent \textbf{Example 1}. 
Consider the bivariate exponential model GBED-I of  
\cite{gumbel1960bivariate}. In this model, $(X,Y)$ has the joint distribution function and the joint density as
\begin{eqnarray*}F_{12}(x,y)&=&1-e^{-x}-e^{-y}+e^{-(x+y+\theta xy)}, \hspace{0.5cm} 0<x, y<\infty, \\
f_{12}(x,y)&=&[(1+\theta x)(1+\theta y)]e^{-(x+y+\theta xy)},\hspace{0.5cm} 0<x, y<\infty.
\end{eqnarray*}
The marginal distributions of $X$ and $Y$ are standard exponentials, and $0 \leq \theta \leq 1$ is the dependence parameter. Let us denote the value of $\kappa$ in this model by $\kappa(\theta)$. Note that, $X$ and $Y$ independent if and only if $\theta=0$, and in that case $\kappa (0)=0$. For this model, it is convenient to use Equation (\ref{kappaalt}).
Some easy computation using this equation leads to:
\begin{eqnarray*}
	\kappa(\theta)&=&\int_{0}^{\infty}\int_{0}^{\infty}[1-e^{-x}- e^{-y}+e^{-x-y-\theta xy}-(1-e^{-x})(1-e^{-y})]^2dxdy\\
	&=&\int_{0}^{\infty}\int_{0}^{\infty}[e^{-(x+y+\theta xy)}-e^{-(x+y)}]^2dxdy\\
	&=&\int_{0}^{\infty}\int_{0}^{\infty}[e^{-2(x+y+\theta xy)}+e^{-2(x+y)}-2e^{-(2(x+y)+\theta xy)}]dxdy\\&=&\frac{1}{2}\int_{0}^{\infty}\frac{\exp(-2y)}{1+\theta y}dy+\frac{1}{4}-2\int_{0}^{\infty}\frac{e^{-2y}}{2+\theta y}dy
\end{eqnarray*}
Let us define the function
$$G(x):=\int_{1}^{\infty}\frac{e^{-xt}}{t}dt.$$
On simplification we get
$$\frac{1}{2}\int_{0}^{\infty}\frac{\exp(-2y)}{1+\theta y}dy=\frac{1}{2\theta}e^{\frac{2}{\theta}}\int_{1}^{\infty}\frac{e^{-\frac{2}{\theta}t}}{t}dt=\frac{1}{2\theta}e^{\frac{2}{\theta}}G(2/\theta),$$
and similarly
$$\int_{0}^{\infty}\frac{e^{-2y}}{2+\theta y}=\frac{1}{\theta}e^{\frac{4}{\theta}}G(4/\theta).$$
Therefore

	\[\kappa(\theta)=
\begin{cases}
	0& \text{if}\ \   \theta=0,\\
	\frac{1}{2\theta}e^{\frac{2}{\theta}}G(2/\theta)+\frac{1}{4}-\frac{2}{\theta}e^{\frac{4}{\theta}}G(4/\theta), & \text{if} \ \ \theta \neq 0.			\end{cases}
\]
	\begin{figure}[H]
	\centering
	\includegraphics[width=0.5\linewidth]{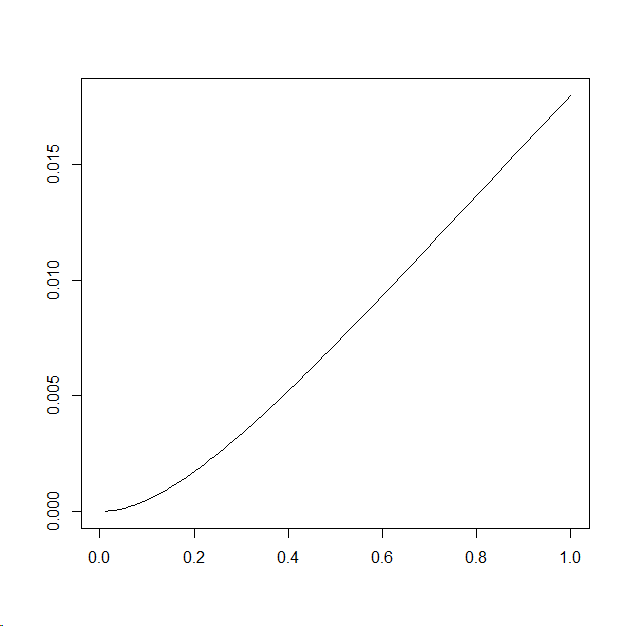}
	\caption{Plot of ($\theta$, $\kappa(\theta)$) for Bivariate exponential distribution}
	\label{fig:kappathethaexp}
\end{figure}
\vspace{0.2cm}
\begin{lemma}
	$\kappa(\theta)$ is an increasing function of $\theta\in (0, \ 1)$ when $(X, Y)$ follows GBED-I.
\end{lemma}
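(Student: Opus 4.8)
The plan is to work directly with the integral representation of $\kappa(\theta)$ derived in Example 1, namely
\[
\kappa(\theta)=\int_{0}^{\infty}\int_{0}^{\infty}\big[e^{-(x+y+\theta xy)}-e^{-(x+y)}\big]^{2}\,dx\,dy,
\]
rather than with the closed form involving the exponential-integral function $G$, since the latter is awkward to differentiate. Writing $g(x,y,\theta):=e^{-(x+y)}\big(e^{-\theta xy}-1\big)=e^{-(x+y+\theta xy)}-e^{-(x+y)}$ for the base of the integrand, I would first record the two sign facts that drive the whole argument: for $x,y>0$ and $\theta>0$ one has $g<0$ (since $\theta xy>0$ forces $e^{-\theta xy}<1$), while $\partial_\theta g=-xy\,e^{-(x+y+\theta xy)}<0$. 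Consequently the $\theta$-derivative of the integrand,
\[
\frac{\partial}{\partial\theta}\,g(x,y,\theta)^{2}=2\,g(x,y,\theta)\,\frac{\partial}{\partial\theta}g(x,y,\theta)=-2xy\,e^{-(x+y+\theta xy)}\big[e^{-(x+y+\theta xy)}-e^{-(x+y)}\big],
\]
is a product of two nonpositive factors and is therefore strictly positive on the open quadrant $\{x>0,\,y>0\}$.

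Next I would justify differentiating under the integral sign. Since $0\le e^{-(x+y+\theta xy)}\le e^{-(x+y)}$ for every $\theta\ge0$, the derivative above admits the bound, uniform over $\theta\in[0,1]$,
\[
\Big|\frac{\partial}{\partial\theta}g^{2}\Big|\le 2xy\,e^{-2(x+y)},\qquad \int_{0}^{\infty}\!\!\int_{0}^{\infty}2xy\,e^{-2(x+y)}\,dx\,dy=2\Big(\int_{0}^{\infty}x e^{-2x}\,dx\Big)^{2}=\tfrac18<\infty .
\]
This integrable, $\theta$-free dominating function licenses the standard theorem on differentiation under the integral sign, yielding
\[
\kappa'(\theta)=\int_{0}^{\infty}\int_{0}^{\infty}\frac{\partial}{\partial\theta}g(x,y,\theta)^{2}\,dx\,dy .
\]

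Finally, because the integrand is strictly positive on $\{x>0,\,y>0\}$, a set of full Lebesgue measure in the domain of integration, the integral is strictly positive for each $\theta\in(0,1)$; hence $\kappa'(\theta)>0$ and $\kappa$ is strictly increasing there. I do not anticipate a genuine obstacle: the single point requiring care is the interchange of derivative and integral over the unbounded quadrant, and the explicit bound $2xy\,e^{-2(x+y)}$ settles it cleanly. As an independent check one could instead differentiate the closed-form expression and reduce the resulting combination of $G(2/\theta)$ and $G(4/\theta)$ to the same conclusion, but the integral route avoids manipulating the exponential-integral terms and makes the monotonicity transparent through the pointwise sign computation.
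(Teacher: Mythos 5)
Your proof is correct, and it takes a genuinely different and substantially simpler route than the paper. The paper differentiates the closed-form expression $\kappa(\theta)=\frac{1}{2\theta}e^{2/\theta}G(2/\theta)+\frac14-\frac{2}{\theta}e^{4/\theta}G(4/\theta)$, substitutes $x=2/\theta$, and then must (i) establish positivity of the resulting function $F(x)$ for large $x$ via a four-term integration-by-parts expansion of $e^xG(x)$, and (ii) prove $F$ is decreasing on $[2,\infty)$ using a two-sided rational bound on $xe^x\int_x^\infty t^{-1}e^{-t}\,dt$ quoted from Luke (1969). You instead differentiate under the integral sign in the representation $\kappa(\theta)=\int_0^\infty\int_0^\infty g(x,y,\theta)^2\,dx\,dy$ with $g=e^{-(x+y+\theta xy)}-e^{-(x+y)}$, observe that $g<0$ and $\partial_\theta g<0$ on the open quadrant so that $\partial_\theta(g^2)=2g\,\partial_\theta g>0$ pointwise, and justify the interchange with the $\theta$-free dominating function $2xy\,e^{-2(x+y)}$, whose integral you correctly compute as $1/8$. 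All steps check out: the domination is uniform over $\theta\in[0,1]$ (indeed over all $\theta\ge 0$), so the standard differentiation-under-the-integral theorem applies, and strict positivity of the integrand on a set of full measure gives $\kappa'(\theta)>0$. What your approach buys is a short, transparent, and strictly stronger conclusion (monotonicity on all of $(0,\infty)$, not just $(0,1)$) with no special-function machinery; what the paper's approach buys is explicit information about $\kappa'(\theta)$ in terms of $G$, which could be reused elsewhere, though at the cost of a considerably more delicate argument.
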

\begin{proof}
	For this it is enough to show that $\kappa^\prime(\theta)>0$. It is easy to check that,
	\begin{equation*}
		G^\prime(2/\theta)=\dfrac{1}{\theta}e^{{-2/\theta}} \ \ \text{and} \ \ G^\prime(4/\theta)=\dfrac{1}{\theta}e^{{-4/\theta}}.
	\end{equation*}
	Hence,
	\begin{eqnarray}
		\dfrac{d}{d\theta}\bigg(\frac{1}{2\theta}e^{\frac{2}{\theta}}G(2/\theta)\bigg)&=&\frac{1}{2\theta}e^{\frac{2}{\theta}}(\dfrac{1}{\theta}e^{{-2/\theta}})+\frac{1}{2\theta}\left( e^{2/\theta}\times\frac{-2}{\theta^2}\right)G(2/\theta)-\frac{1}{2\theta^2}e^{\frac{2}{\theta}}G(2/\theta)\nonumber\\
		&=&\frac{1}{2\theta^2}-e^{{2/\theta}}G(2/\theta)\left(\frac{1}{\theta^3}+\frac{1}{2\theta^2}\right).
	\end{eqnarray}
	Similarly,
	\begin{eqnarray}
		\dfrac{d}{d\theta}\left(\frac{2}{\theta}e^{\frac{4}{\theta}}G(4/\theta)\right)&=&\frac{2}{\theta}e^{\frac{4}{\theta}}(\dfrac{1}{\theta}e^{{-4/\theta}})+\frac{2}{\theta}\left( e^{4/\theta}\times\frac{-4}{\theta^2}\right)G(4/\theta)-\frac{2}{\theta^2}e^{\frac{4}{\theta}}G(4/\theta)\nonumber\\
		&=&\frac{2}{\theta^2}-e^{\frac{4}{\theta}}G(4/\theta)\left(\frac{8}{\theta^3}+\frac{2}{\theta^2}\right).
	\end{eqnarray}
	Combining the above two expression we get the first derivative of $\kappa$ as:
	\begin{eqnarray}
		\kappa^\prime(\theta)
		&=&\frac{1}{2\theta^2}-e^{{2/\theta}}G(2/\theta)\left(\frac{1}{\theta^3}+\frac{1}{2\theta^2}\right)-\left[\frac{2}{\theta^2}-e^{\frac{4}{\theta}}G(4/\theta)\left(\frac{8}{\theta^3}+\frac{2}{\theta^2}\right)\right]\nonumber\\
		&=&\left(\frac{1}{2\theta^2}-\frac{2}{\theta^2}\right)+\bigg(\frac{8}{\theta^3}+\frac{2}{\theta^2}\bigg)e^{\frac{4}{\theta}}G(4/\theta)-\bigg(\frac{1}{\theta^3}+\frac{1}{2\theta^2}\bigg)e^{\frac{2}{\theta}}G(2/\theta)\nonumber\\
		&=&-\frac{3}{2\theta^2}+\bigg(\frac{8}{\theta^3}+\frac{2}{\theta^2}\bigg)e^{\frac{4}{\theta}}G(4/\theta)-\bigg(\frac{1}{\theta^3}+\frac{1}{2\theta^2}\bigg)e^{\frac{2}{\theta}}G(2/\theta)\label{kappadash}
	\end{eqnarray}
From Figure \ref{fig:kappadrvtvbve}, we can observe that the values of $\kappa^\prime(\theta)>0$ for $\theta \in (0,1)$. The analytical proof of this is given as follows.
\begin{figure}[H]
	\centering
	\includegraphics[width=0.5\linewidth]{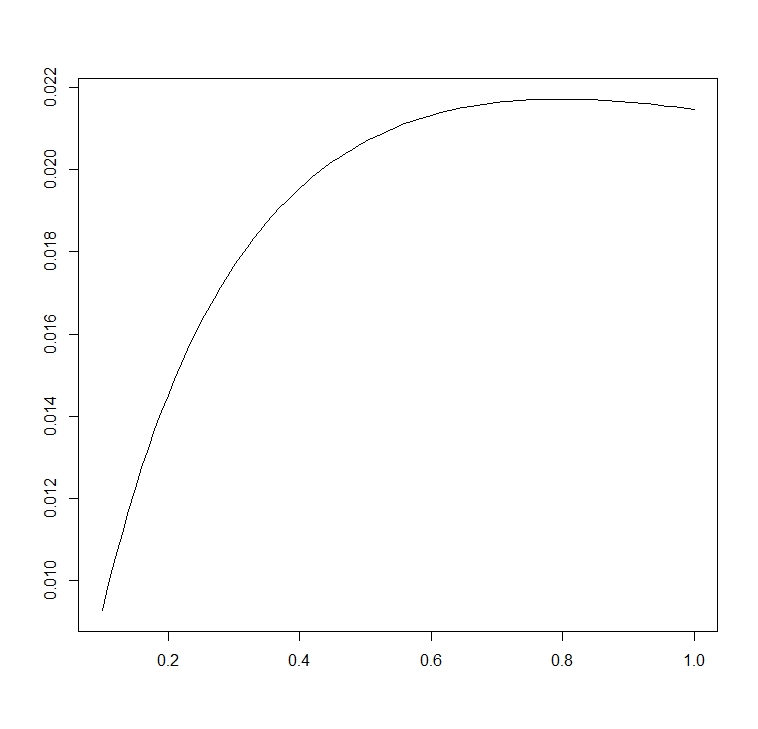}
	\caption{Plot of ($\theta$, $\kappa^\prime(\theta)$), $\theta \in (0,1)$ for GBED-I}
	\label{fig:kappadrvtvbve}
\end{figure}
\noindent	Let $$f(\theta)=2\theta^2 \kappa^\prime(\theta)=\left[-3+\bigg(\frac{16}{\theta}+4\bigg)e^{\frac{4}{\theta}}G(4/\theta)-\bigg(\frac{2}{\theta}+1\bigg)e^{\frac{2}{\theta}}G(2/\theta)\right].$$
	We substitute $\frac{2}{\theta}=x$ and write
	\begin{equation}
		F(x)=\left[-3+(8x+4)e^{2x}G(2x)-(x+1)e^{x}G(x)\right].
	\end{equation}	
	
	\noindent Note that $x\geq2$. We have to show that $F(x)>0$ for all $x \geq 2$.
	\subsubsection*{$F(x)>0$ for large $x$:}
	By repeated integration by parts, we can write,
	$$G(x)=\frac{e^{-x}}{x}-\frac{e^{-x}}{x^2}+\frac{2e^{-x}}{x^3}-6\frac{e^{-x}}{x^4}+24\int_{x}^{\infty}\frac{e^{-t}}{t^5}dt,$$
	$$e^xG(x)=\frac{1}{x}-\frac{1}{x^2}+\frac{2}{x^3}-\frac{6}{x^4}+24\int_{x}^{\infty}\frac{e^{-(t-x)}}{t^5}dt.$$
	Hence we have
	\begin{eqnarray*}
		F(x)&=&\left[-3+(8x+4)e^{2x}G(2x)-(x+1)e^{x}G(x)\right]\\
		&=&-3+(8x+4)\left(\frac{1}{2x}-\frac{1}{4x^2}+\frac{2}{8x^3}-\frac{6}{16x^4}+24\int_{2x}^{\infty}\frac{e^{-(t-2x)}}{t^5}dt\right)\\ && -(x+1)\left(\frac{1}{x}-\frac{1}{x^2}+\frac{2}{x^3}-\frac{6}{x^4}+24\int_{x}^{\infty}\frac{e^{-(t-x)}}{t^5}dt\right)\\
		&=&\frac{2}{x^3}+\frac{9}{2x^4}+(8x+4)24\int_{2x}^{\infty}\frac{e^{-(t-2x)}}{t^5}dt-24(x+1)\int_{x}^{\infty}\frac{e^{-(t-x)}}{t^5}dt>0
	\end{eqnarray*}
	as $ x \rightarrow \infty$. Since the last terms are of $O(x^{-4})$, $F(x)>0$ for large $x$. Note from Figure \ref{fig:kappafx}, that $F(x)$ is a decreasing function of $x$. We prove this using the first derivative of $F(x)$. 
	\begin{figure}[H]
		\centering
		\includegraphics[width=0.5\linewidth]{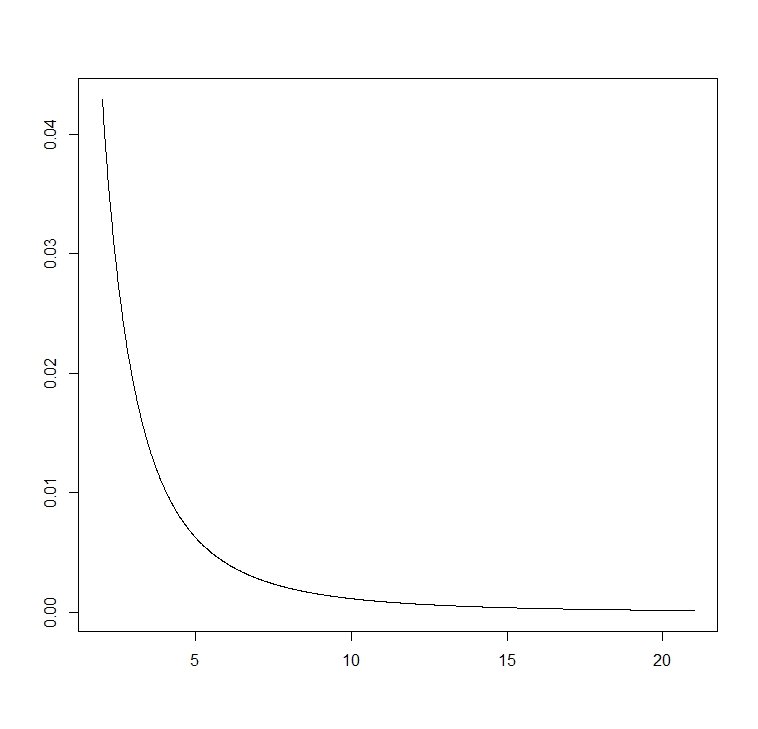}
		\caption{Plot of ($x$,$F(x)$) for $x\geq 2$.}
		\label{fig:kappafx}
	\end{figure}
	\subsubsection*{$F$ is decreasing:}
	Now consider the first derivative of $F(x)$. Clearly,
	$$G'(x)=-\dfrac{e^{-x}}{x}.$$
	Define,
	\begin{equation}
		H(x)=(x+1)e^{x}G(x)
	\end{equation}
	Then
	\begin{eqnarray}
		H'(x)&=&(x+1)e^{x}\left(\frac{-e^{-x}}{x}\right)+(x+1)e^{x}G(x)+e^{x}G(x)\nonumber\\
		&=&(x+2)e^{x}G(x)-\left(\frac{x+1}{x}\right)
	\end{eqnarray}
	We have
	\begin{eqnarray*}
		F(x)&=&-3+(8x+4)e^{2x}G(2x)-(x+1)e^{x}G(x)\\
		&=&-3+4(2x+1)e^{2x}G(2x)-(x+1)e^{x}G(x)\\
		&=&-3+4H(2x)-H(x)
	\end{eqnarray*}
	Hence,
	\begin{eqnarray*}
		F'(x)&=&8H'(2x)-H'(x)\\
		&=&8\left[(2x+2)e^{2x}G(2x)-\left(\frac{2x+1}{2x}\right)\right]-\left[(x+2)e^{x}G(x)-\left(\frac{x+1}{x}\right)\right]\\
		&=&16((x+1)e^{2x}G(2x))-8\left(\frac{2x+1}{2x}\right)-(x+2)e^{x}G(x)+\left(\frac{x+1}{x}\right)\\
		&=&16((x+1)e^{2x}G(2x))-(x+2)e^{x}G(x)-8-\frac{4}{x}+1+\frac{1}{x}\\
		&=&16((x+1)e^{2x}G(2x))-(x+2)e^{x}G(x)-7-\frac{3}{x}
	\end{eqnarray*}
	We use the following inequality from \cite{Luke1969} (see pg-$201$)
	\begin{equation}
		\dfrac{x(x+3)}{x^2+4x+2}<xe^{x}\int_{x}^{\infty}\left(t^{-1}e^{-t}\right)dt<\dfrac{x^2+5x+2}{x^2+6x+6}, \ \ 0<x<\infty.
	\end{equation}
	\begin{eqnarray*}
		xF'(x)&=&\frac{16}{2}(x+1)\left(2xe^{2x}G(2x)\right)-(x+2)\left(xe^{x}G(x)\right)-7x-3\\
		&>&8(x+1)\dfrac{2x(2x+3)}{(2x)^2+4(2x)+2}-(x+2)\left(\dfrac{x^2+5x+2}{x^2+6x+6}
		\right)-7x-3\\
		&=&8(x+1)\dfrac{4x^2+6x}{4x^2+8x+2}-(x+2)\left(1-\dfrac{x+4}{x^2+6x+6}\right)-7x-3\\
		&=&8(x+1)\left(1-\dfrac{2x+2}{4x^2+8x+2}\right)-(x+2)+\left(\dfrac{(x+2)(x+4)}{x^2+6x+6}\right)-7x-3\\
		&=&8x+8-x-2-7x-3-\left(\dfrac{8(x+1)2(x+1)}{2(2x^2+4x+1)}\right)+\left(\dfrac{(x^2+6x+8)}{x^2+6x+6}\right)\\
		&=&3-8\left(\dfrac{x^2+2x+1}{2x^2+4x+1}\right)+\left(1+\dfrac{2}{x^2+6x+6}\right)\\&=&4-4\left(\dfrac{2x^2+4x+2}{2x^2+4x+1}\right)+\left(\dfrac{2}{x^2+6x+6}\right)\\&=&4-4\left(1+\dfrac{1}{2x^2+4x+1}\right)+\left(\dfrac{2}{x^2+6x+6}\right)\\
		&=&\dfrac{-4}{2x^2+4x+1}+\dfrac{2}{x^2+6x+6}<0, \ \ \ x\geq 2.
	\end{eqnarray*}
	Hence $F'(x)<0, \ \ x\geq 2$. Therefore the result, follows immediately since $F$ is decreasing, and $F(x) > 0$ for all large $x$.
\end{proof}


\noindent \textbf{Example 2}. Now let us consider the case where $F_{12}$ is the bivariate normal distribution. We need the following lemma for the calculation of the various components of $\kappa$.
\begin{lemma}\label{bvncalc}Suppose $(Z_1,Z_2)~N(0,0,1,1,\theta)$. Then 
	\begin{enumerate}[label=(\roman*)]
	\item $\BE\phi(Z_1)=\BE\phi(Z_2)=\dfrac{1}{2\sqrt{\pi}},$\label{part11}\\
	\item $\BE(Z_1Z_2)=\theta,$\label{part12}\\
	\item $\BE(Z_2\phi(Z_1))=\BE(Z_1\phi(Z_2))=0,$\label{part13}\\
	\item  $\BE[Z_1Z_2\Phi(Z_2)]=\BE[Z_1Z_2\Phi(Z_1)]=\dfrac{\theta}{2},$\label{part14}\\
	\item $\BE(\phi(Z_1)\phi(Z_2))=\dfrac{1}{2\pi}\sqrt{\dfrac{1}{(4-\theta^2)}},$\label{part15}\\
	\item $\BE[Z_2\Phi(Z_2)\phi(Z_1)=\dfrac{2-\theta^2}{4\pi\sqrt{4-\theta^2}},$\label{part16}\\
	\item $\BE(Z_1Z_2\Phi(Z_1)\Phi(Z_2))=
	\dfrac{1}{4}+\dfrac{1}{2\pi}\sin^{-1}\bigg(\dfrac{\theta}{2}\bigg)+\dfrac{\theta}{4\pi\sqrt{4-\theta^2}}+\dfrac{\theta(2-\theta^2)}{4\pi\sqrt{4-\theta^2}}+\dfrac{(1-\theta^2)(2-\theta^2)}{4\pi\sqrt{4-\theta^2}}.$\label{part17}\\
\end{enumerate}
\end{lemma}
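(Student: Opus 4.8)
My plan is to prove all seven identities from one small toolkit and to build the harder parts out of the earlier ones. The basic ingredients are: the conditional law $Z_2\mid Z_1\sim N(\theta Z_1,\,1-\theta^2)$ and its mirror image; the two standard Gaussian averages $\BE[\Phi(a+bV)]=\Phi\big(a/\sqrt{1+b^2}\big)$ and $\BE[V\Phi(a+bV)]=\tfrac{b}{\sqrt{1+b^2}}\phi\big(a/\sqrt{1+b^2}\big)$ for $V\sim N(0,1)$, the latter being Stein's identity; the Gaussian integration-by-parts identity for a correlated pair, $\BE[Z_1 f(Z_1,Z_2)]=\BE[\partial_1 f]+\theta\,\BE[\partial_2 f]$; and Sheppard's orthant formula $P(W_1>0,W_2>0)=\tfrac14+\tfrac{1}{2\pi}\sin^{-1}r$ for a centred bivariate normal of correlation $r$.

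Parts \ref{part11}--\ref{part14} then fall out quickly. For \ref{part11}, $\BE\phi(Z_1)=\int\phi(z)^2\,dz=\tfrac{1}{2\sqrt\pi}$ is a one-line Gaussian integral, and \ref{part12} is just the definition of the correlation. For \ref{part13}, conditioning on $Z_1$ replaces $Z_2$ by $\theta Z_1$, leaving $\theta\,\BE[Z_1\phi(Z_1)]$, which vanishes by oddness. For \ref{part14}, conditioning on $Z_2$ gives $\theta\,\BE[Z_2^2\Phi(Z_2)]$, and writing $\Phi=\tfrac12+(\Phi-\tfrac12)$ kills the odd part, so $\BE[Z_2^2\Phi(Z_2)]=\tfrac12\BE[Z_2^2]=\tfrac12$ and the value is $\theta/2$.

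For \ref{part15} I would multiply the two marginal densities by the joint density, collect the exponent into a single quadratic form $-\tfrac12 z^\top M z$, and use $\int e^{-\frac12 z^\top M z}\,dz=2\pi/\sqrt{\det M}$; since $\det M=(4-\theta^2)/(1-\theta^2)$, the normalising constants collapse to $\tfrac{1}{2\pi\sqrt{4-\theta^2}}$. Part \ref{part16} is the first genuinely laborious computation: conditioning on $Z_1$ and applying the two Gaussian averages gives $\BE[Z_2\Phi(Z_2)\mid Z_1]=\theta Z_1\,\Phi(cZ_1)+\tfrac{1-\theta^2}{\sqrt{2-\theta^2}}\,\phi(cZ_1)$ with $c=\theta/\sqrt{2-\theta^2}$; integrating this against the squared density $\phi(Z_1)^2$ reduces everything to the elementary integrals $\int z\,\phi(z)^2\Phi(cz)\,dz$ and $\int\phi(z)^2\phi(cz)\,dz$, which combine to $\tfrac{2-\theta^2}{4\pi\sqrt{4-\theta^2}}$.

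The final part \ref{part17} is the crux, and I would reduce it to the pieces already in hand rather than attack it head-on. Applying the correlated integration-by-parts identity with $f(z_1,z_2)=z_2\Phi(z_1)\Phi(z_2)$ gives
\[
\BE[Z_1Z_2\Phi(Z_1)\Phi(Z_2)]=\BE[Z_2\phi(Z_1)\Phi(Z_2)]+\theta\,\BE[\Phi(Z_1)\Phi(Z_2)]+\theta\,\BE[Z_2\phi(Z_2)\Phi(Z_1)].
\]
The first term is exactly \ref{part16}; the middle term is Sheppard's formula applied to the auxiliary pair $(Z_1-U_1,\,Z_2-U_2)$ of correlation $\theta/2$ (with $U_1,U_2$ i.i.d.\ standard normal, independent of everything), which is precisely where $\sin^{-1}(\theta/2)$ enters; and the last expectation is another single-$\Phi$ integral handled by the same conditioning as in \ref{part16}. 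The main obstacle is purely the bookkeeping in assembling the $\sin^{-1}$ contribution together with the algebraic $\sqrt{4-\theta^2}$ terms and, above all, fixing the additive constant correctly. I would pin that constant down by the independence check at $\theta=0$, where the left side factorises as $\big(\BE[Z\Phi(Z)]\big)^2=\big(\tfrac{1}{2\sqrt\pi}\big)^2=\tfrac{1}{4\pi}$; matching this value is the safeguard that the several $O(\theta)$ and $O(1)$ pieces have been combined without an extraneous term.
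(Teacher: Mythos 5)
Your toolkit is sound, and parts (i)--(vi) follow essentially the paper's own route (conditioning via $Z_2\mid Z_1\sim N(\theta Z_1,1-\theta^2)$, the Gaussian averages of $\Phi(a+bV)$ and $V\Phi(a+bV)$, and a quadratic-form integral for (v)); your symmetry argument $\BE[Z^2\Phi(Z)]=\tfrac12\BE[Z^2]$ for (iv) is in fact cleaner than the paper's Fubini computation. For (vii) you take a genuinely different route: the paper conditions on $Z_2$, writes the expectation as $T_1+T_2$ with $T_1=\theta\,\BE\big[Z_2^2\Phi(Z_2)\Phi\big(\theta Z_2/\sqrt{2-\theta^2}\big)\big]$, and grinds the inner expectation into three further integrals $T_{11},T_{12},T_{13}$ (the orthant probability entering through $T_{11}$), whereas your single application of the correlated Gaussian integration-by-parts identity reduces (vii) in one line to part (vi), the orthant probability $\BE[\Phi(Z_1)\Phi(Z_2)]=\tfrac14+\tfrac1{2\pi}\sin^{-1}(\theta/2)$, and one elementary integral. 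This is shorter and less error-prone.

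Indeed, your $\theta=0$ safeguard is not merely a precaution here: carried out correctly, your computation gives
\[
\BE[Z_1Z_2\Phi(Z_1)\Phi(Z_2)]=\frac{2-\theta^2}{4\pi\sqrt{4-\theta^2}}+\theta\Big(\frac14+\frac1{2\pi}\sin^{-1}\big(\tfrac\theta2\big)\Big)+\frac{\theta^2}{4\pi\sqrt{4-\theta^2}}
=\frac{\theta}{4}+\frac{\theta}{2\pi}\sin^{-1}\big(\tfrac\theta2\big)+\frac{1}{2\pi\sqrt{4-\theta^2}},
\]
which equals $1/(4\pi)=\big(\BE[Z\Phi(Z)]\big)^2$ at $\theta=0$, while the right-hand side stated in part (vii) evaluates there to $\tfrac14+\tfrac1{4\pi}$. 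The stated formula reflects a slip in the paper's proof, where the factor $\theta$ is dropped from $T_1$ (it writes $T_1=T_{11}+T_{12}+T_{13}$ after having defined $T_1=\theta\,\BE[Z^2\Phi(Z)\Phi(bZ)]$); equivalently, the first four terms of the displayed answer in (vii) should each carry a factor $\theta$. The slip is confined to the statement and that line: in the subsequent computation of $\mu_3$ the paper multiplies exactly those terms by $\theta$ again, so the downstream expression for $\kappa(\theta)$ agrees with your (correct) value. Your plan therefore works, and its built-in consistency check is precisely what catches the discrepancy.
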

\begin{proof}
	
	For $(Z_1,Z_2) \sim N(0,0,1,1,\theta)$ it is easy to verify parts \ref{part11}, and \ref{part12}.
	Expression in part \ref{part13} can be verified by writing $Z_1=\theta Z_2+\sqrt{1-\theta^2}X$, where $X$ is independent of $Z_2$ and $X$ is standard normal random variable.\\
			\indent Proof of part \ref{part14}.		\begin{eqnarray*}
				\BE[Z_1Z_2\Phi(Z_2)]&=&\BE[(\theta Z_2+\sqrt{1-\theta^2}X)Z_2\Phi(Z_2)], \hspace{1cm}\BE(X)=0\\
				&=&\theta\BE(Z_2^2\Phi(Z_2))			\end{eqnarray*}
			\begin{eqnarray*}
				\BE(Z_2^2\Phi(Z_2))&=&\int_{-\infty}^{\infty}t^2\Phi(t)\phi(t)dt=\int_{-\infty}^{\infty}t^2\phi(t)\big[\int_{-\infty}^{t}\phi(x)dx\big]dt,\hspace{0.5cm}x\leq t\\&=&\int_{-\infty}^{\infty}\phi(x)\big(\int_{x}^{\infty}t^2\phi(t)dt\big)dx=\int_{-\infty}^{\infty}\phi(x)\big(\int_{x}^{\infty}t (t\phi(t))dt\big)dx,\hspace{1cm} \\
				&=&\int_{-\infty}^{\infty}\phi(x)\big[x\phi(x)+\int_{x}^{\infty}\phi(t)dt\big]dx\\&=&\int_{-\infty}^{\infty}x\phi^2(x)dx+\int_{-\infty}^{\infty}\phi(x)[1-\Phi(x)]dx=0+\int_{-\infty}^{\infty}\phi(x)[1-\Phi(x)]dx\\&=&\BE[1-\Phi(Z)],
			\end{eqnarray*}
			where $Z$ follows standard normal. We know that $\Phi(Z)\sim U(0,1)$
			so, $\BE[1-\Phi(Z)]=\dfrac{1}{2}$
			Therefore
			$$\BE[Z_1Z_2\Phi(Z_2)]=\theta\BE(Z_2^2\Phi(Z_2))=\dfrac{\theta}{2}$$
			
			Proof of part \ref{part15} 
			Recall multivariate normal density, $X \sim N(0,\Sigma)$. Then with $\Sigma$ positive definite
			$$f(x)=\dfrac{1}{(2\pi)^{k/2}|\Sigma|^{1/2}}e^{-\dfrac{1}{2}x'\Sigma^{-1}x} \text{and},\hspace{0.5cm}\int e^{-\dfrac{1}{2}x'\Sigma^{-1}x}dx=(2\pi)^{k/2}|\Sigma|^{1/2}.$$
			Substituting $\Sigma^{-1}=B$, we get
			\begin{eqnarray}
				\int e^{-\dfrac{1}{2}x'Bx}dx
				&=&(2\pi)^{k/2}\dfrac{1}{|B|^{1/2}}.\label{multnorm}
			\end{eqnarray}
			Let $f$ be the density of $\hat{Z}=(Z_1,Z_2)$, Let  \[
			\Sigma = 
			\begin{pmatrix}
				1 & \theta \\
				\theta & 1 
			\end{pmatrix}
			\]
			Then
			\begin{eqnarray*}
				\BE(\phi(Z_1)\phi(Z_2))&=&\int\int \phi(Z_1)\phi(Z_2)f(z)dz\\
				&=&\int\int \big(\dfrac{1}{\sqrt{2\pi}}e^{-\dfrac{{z_1}^2}{2}}\big)\big(\dfrac{1}{\sqrt{2\pi}}e^{-\dfrac{{z_1}^2}{2}}\big)\big(\dfrac{1}{(2\pi)|\Sigma|^{1/2}}e^{-\dfrac{1}{2}z'\Sigma^{-1}z}\big)dz\\
				&=&\dfrac{1}{2\pi|\Sigma|^{1/2}}\bigg(\dfrac{1}{2\pi}\int\int e^{-\dfrac{{z'(I+\Sigma^{-1})z}}{2}}dz\bigg).\\
			\end{eqnarray*}
			By defining $B=I+\Sigma^{-1}$ and by Equation \eqref{multnorm} with $k=2$ we get
			$$\BE(\phi(Z_1)\phi(Z_2))=\dfrac{1}{2\pi|\Sigma|^{1/2}}\dfrac{1}{|B|^{1/2}}$$
			\[
			B=I+\Sigma^{-1}=\dfrac{1}{(1-\theta^2)}
			\begin{pmatrix}
				(2-\theta^2) & -\theta \\
				-\theta & (2-\theta^2) 
			\end{pmatrix}
			\]
			$$|\Sigma|^{1/2}|B|^{1/2}=\sqrt{1-\theta^2}\dfrac{\sqrt{(2-\theta^2)^2-\theta^2}}{1-\theta^2}=\sqrt{\dfrac{(2-\theta^2)^2-\theta^2}{1-\theta^2}}$$
			$$\BE(\phi(Z_1)\phi(Z_2))=\dfrac{1}{2\pi}\sqrt{\dfrac{1-\theta^2}{(2-\theta^2)^2-\theta^2}}=\dfrac{1}{2\pi}\sqrt{\dfrac{1}{(4-\theta^2)}}.$$
			Proof of part \ref{part16}	To prove the equation in part \ref{part16} let us write the completing square
			\begin{eqnarray}
				(a+bt)^2+t^2&=&t^2(1+b^2)+2abt+a^2\nonumber\\
				&=&(1+b^2)(t+\dfrac{ab}{1+b^2})^2+\dfrac{a^2}{1+b^2}\label{comsq}
			\end{eqnarray}
		Now, we have
			$$\BE(Z_2\Phi(Z_2)\phi(Z_1))=\BE[Z_2\phi(Z_2)\BE\phi(\theta Z_2+\sqrt{1-\theta^2}X)|Z_2]$$
			Now, with $a=\theta Z_2$ and $b=\sqrt{1-\theta^2}$ and by Equation \eqref{comsq},
			\begin{eqnarray}
				\BE[\phi(\theta Z_2+\sqrt{1-\theta^2}X)|Z_2]&=&\BE\phi(a+bX)\nonumber\\&=&\dfrac{1}{\sqrt{2\pi}}\dfrac{1}{\sqrt{2\pi}}\int e^{-\dfrac{1}{2}(a+bt)^2} e^{-\dfrac{t^2}{2}}dt\nonumber\\&=&\dfrac{1}{\sqrt{2\pi}}\dfrac{1}{\sqrt{2\pi}}\int e^{-\dfrac{1}{2}\big((1+b^2)(t+\dfrac{ab}{1+b^2})^2+\dfrac{a^2}{1+b^2}\big)}dt\nonumber\\
				&=&\dfrac{1}{\sqrt{2\pi}}e^{-\dfrac{1}{2}\dfrac{a^2}{1+b^2}}\dfrac{1}{\sqrt{2\pi}}\int e^{-\dfrac{1}{2}(1+b^2)(t+\dfrac{a}{1+b^2})^2}dt
				\nonumber\\
				&=&\dfrac{1}{\sqrt{2\pi}}e^{-\dfrac{1}{2}\dfrac{a^2}{1+b^2}}\dfrac{1}{\sqrt{1+b^2}}\label{eq1part16}
			\end{eqnarray}
			Now, substituting $a=\theta Z_2$, $b=\sqrt{1-\theta^2}$ in \eqref{eq1part16}, we get
			\begin{eqnarray*}
				&&\BE[Z_2\Phi(Z_2)\BE\phi(\theta Z_2+\sqrt{1-\theta^2}X)]=\BE[Z_2\Phi(Z_2)\dfrac{1}{\sqrt{2\pi}}e^{-\dfrac{1}{2}\dfrac{\theta^2 Z_2^2}{2-\theta^2}}\dfrac{1}{\sqrt{2-\theta^2}}]\\&=&
				\dfrac{1}{\sqrt{2\pi}}\dfrac{1}{\sqrt{2-\theta^2}}\BE[Z_2\Phi(Z_2)e^{-\dfrac{1}{2}\dfrac{\theta^2 Z_2^2}{2-\theta^2}}]
				\\&=&\dfrac{1}{\sqrt{2\pi}}\dfrac{1}{\sqrt{2-\theta^2}}\int t \int_{-\infty}^{t} \dfrac{1}{\sqrt2\pi}e^{-\dfrac{x^2}{2}}dx e^{-\dfrac{1}{2}\dfrac{\theta^2 t^2}{2-\theta^2}} \dfrac{1}{\sqrt{2\pi}}e^{-\dfrac{t^2}{2}} dt ,\hspace{0.5cm} x\leq t\\
				&=& \dfrac{1}{2\pi}\dfrac{1}{\sqrt{2-\theta^2}}\dfrac{1}{\sqrt2\pi}\int_{-\infty}^{\infty} e^{-\dfrac{x^2}{2}} \int_{x}^{\infty} t e^{-\dfrac{1}{2}\big(\dfrac{2 }{2-\theta^2}\big)t^2}  dtdx\\&=& \dfrac{1}{2\pi}\dfrac{1}{\sqrt{2-\theta^2}}\dfrac{1}{\sqrt2\pi}\int_{-\infty}^{\infty} e^{-\dfrac{x^2}{2}} \dfrac{-(2-\theta^2) }{2}\dfrac{d}{dt}e^{-\dfrac{1}{2}\big(\dfrac{2 }{2-\theta^2}\big)t^2}  dtdx\\
				&=&\dfrac{1}{2\pi}\dfrac{1}{\sqrt{2-\theta^2}}\dfrac{1}{\sqrt2\pi}\int_{-\infty}^{\infty}\dfrac{(2-\theta^2) }{2} e^{-\dfrac{x^2}{2}}\bigg(-e^{-\dfrac{1}{2}\big(\dfrac{2 }{2-\theta^2}\big)t^2}\bigg) \Bigg|_{x}^{\infty} dx\\
				&=&\dfrac{1}{2\pi}\dfrac{1}{\sqrt{2-\theta^2}}\dfrac{(2-\theta^2) }{2}\int_{-\infty}^{\infty}\dfrac{1}{\sqrt2\pi}e^{-\dfrac{x^2}{2}\bigg(1+\dfrac{2}{2-\theta^2}\bigg)}dx\\
				&=&\dfrac{1}{4\pi}\dfrac{2-\theta^2}{\sqrt{2-\theta^2}}\sqrt{\dfrac{2-\theta^2}{4-\theta^2}}=\dfrac{2-\theta^2}{4\pi\sqrt{4-\theta^2}}.
			\end{eqnarray*}
			Proof of part \ref{part17} can be obtained by conditioning on $Z_2$ given as follows
			\begin{eqnarray}
				\BE(Z_1Z_2\Phi(Z_1)\Phi(Z_2))&=&\BE(Z_1Z_2\Phi(Z_1)\Phi(Z_2)|Z_2)\nonumber\\
				&=&\BE[Z_2\Phi(Z_2)\BE(\theta Z_2+\sqrt{1-\theta^2}X)\Phi(Z_1)|Z_2]\nonumber\\
				&=&\BE[\theta Z_2^2\Phi(Z_2)\BE[\Phi(Z_1)]|Z_2]+\BE[Z_2\Phi(Z_2)\BE[\sqrt{1-\theta^2}X\Phi(Z_1)]|Z_2]\nonumber\\
				&=&T_1+T_2,\hspace{0.5cm} \text{say.}\label{eqpart171}
			\end{eqnarray}
		To calculate $T_1$, let $N$ be a standard normal random variable which is independent of $Z$ then $$\Phi(a+cZ)=P(N\leq a+cZ|Z)$$
			\begin{eqnarray*}
				\BE[\Phi(a+cZ)]&=&\BE[P(N\leq a+cZ)|Z]\\
				&=&\BE[\BE[I(N\leq a+cZ)]|Z]\\
				&=&\BE[I(N\leq a+cZ)]\\
				&=&P(N\leq a+cZ)\\
				&=&P(N-cZ \leq a)	
			\end{eqnarray*}
			Note that  $$N-bz\sim N(0,1+c^2) = P\bigg(\dfrac{N-cZ}{\sqrt{1+c^2}}\leq \dfrac{a}{\sqrt{1+c^2}}\bigg)$$\\
			\begin{equation}
				\BE[\Phi(a+cZ)]=\Phi \bigg(\dfrac{a}{\sqrt{1+c^2}}\bigg).\label{T1req}
			\end{equation}
			We have $T_1=\BE[\theta Z_2^2 \Phi(Z_2) \BE[\Phi(Z_1)|Z_2]]$ and let $Z_1=\theta Z_2 + \sqrt{1-\theta^2}N$
			\begin{eqnarray*}
				\BE[\Phi(Z_1)|Z_2] &=& \BE[\Phi(\theta Z_2+\sqrt{1-\theta^2}N)|Z_2]\\
				&=& \Phi\bigg(\dfrac{\theta Z_2}{\sqrt{2-\theta^2}}\bigg)
			\end{eqnarray*}
			by previous result in \eqref{T1req}, substituting $ a=\theta Z_2 , c=\sqrt{1-\theta ^2}$\\
			So, $$ T_1 = \theta \BE \bigg[Z_2^2 \Phi(Z_2)\Phi \bigg(\dfrac{\theta Z_2}{\sqrt{2-\theta^2}}\bigg)\bigg]$$\\
			\begin{eqnarray*}
				\BE [Z^2 \Phi(Z)\Phi (bZ)]&=&\int z^2 \Phi(z) \Phi(bz) \phi(z) dz\\
				&=&\int_{-\infty}^{\infty} [z \Phi(z) \Phi(bz)] [z \phi(z)] dz\\
				&=& \int_{-\infty}^{\infty}\{ \phi(z)\Phi(z)\Phi(bz) + z\phi^2(z)\Phi(bz) +\phi(z)bz\Phi(z)\phi(bz)\}dz\\
				&=& T_{11}+T_{12}+T_{13} \hspace{0.5cm}\text{(say).}
			\end{eqnarray*}
			$$	T_{11}=\int \phi(z) \Phi(z) \Phi(bz)dz$$
			Suppose $N_1,N_2$ are independent N(0,1) and also independent of Z.\\
			Then
			\begin{eqnarray*}
				T_{11}&=&\BE[ P(N_1 \leq ZN_2\leq bZ) |Z]\\
				&=&\BE[I(N_1 \leq Z, N_2 \leq bZ)|Z]\\
				&=& P(N_1 -Z \leq 0, N_2 - bZ \leq o).
			\end{eqnarray*}
			Let $$A_1 = N_1-Z, A_2 = N_2-bZ$$
			Then
			$$	A_1 \sim N(0,2), \hspace{0.5cm}A_2 \sim N(0,1+b^2),\hspace{0.5cm} \hspace{0.5cm} Cov(A_1,A_2) = b.$$
			\begin{eqnarray*}
				T_{11}	&=& P\bigg(\dfrac{N_1-Z}{\sqrt{2}}\leq 0, \dfrac{N_2-bZ}{\sqrt{1+b^2}} \leq 0\bigg)=P(B_1 \leq 0, B_2 \leq 0).
			\end{eqnarray*}
			where $(B_1,B_2)\sim N(0,0,1,1,d)$, since  $b=\dfrac{\theta}{\sqrt{2-\theta^2}}.$ $$d=Cov(B_1,B_2)=\dfrac{1}{\sqrt{2}}\dfrac{b}{\sqrt{1+b^2}}=\dfrac{\theta}{2}.$$
			Switching to polar coordinates and solving we get
			\begin{eqnarray}
				T_{11}=P(B_1<0,B_2<0)&=&\dfrac{1}{4}+\dfrac{1}{2\pi}\sin^{-1}(d)\nonumber\\
				&=&\dfrac{1}{4}+\dfrac{1}{2\pi}\sin^{-1}\bigg(\dfrac{\theta}{2}\bigg)\label{eqpart172}
			\end{eqnarray}
			We have,
			\begin{eqnarray}
				T_{12} &=& \dfrac{1}{2\pi} \int z e^{-z^2} \Phi(bz) dz\nonumber\\
				&=& \dfrac{1}{2\pi}\int \Phi(bz) (ze^{-z^2})dz\nonumber\\&=&\dfrac{1}{2\pi}\Phi(bz)\bigg(\dfrac{-e^{-z^2}}{2}\bigg)\bigg|_{-\infty}^{\infty}-\dfrac{1}{2\pi}\int b\phi(bz)\bigg(\dfrac{-e^{-z^2}}{2}\bigg)dz\nonumber\\
				&=& \dfrac{1}{2\pi}\dfrac{b}{2}\int \phi(bz) e^{-z^2}dz\nonumber\\
				&=& \dfrac{1}{2\pi}\dfrac{b}{2}\int \dfrac{1}{\sqrt{2\pi}}e^{-\dfrac{z^2}{2}(2+b^2)}dz\nonumber\\
				&=& \dfrac{1}{2\pi}\dfrac{b}{2}\dfrac{1}{\sqrt{2+b^2}}
				=\dfrac{1}{4\pi}\dfrac{\theta}{\sqrt{4-\theta^2}}.\label{eqpart173}
			\end{eqnarray}
			Further,
			\begin{eqnarray}
				T_{13}&=& \int bz\phi(z)\Phi(z)\phi(bz)dz\nonumber\\&=&b\dfrac{1}{2\pi}\int z e^{-z^2/2}\Phi(z)e^{-b^2z^2/2}dz\nonumber\\
				&=&b\dfrac{1}{2\pi}\int z e^{-\dfrac{z^2(1+b^2)}{2}}\Phi(z) dz\nonumber\\
				&=&\dfrac{1}{2\pi}\dfrac{b}{(1+b^2)}\dfrac{1}{\sqrt{2\pi}}\int e^{-\dfrac{z^2(2+b^2)}{2}}dz\nonumber\\
				&=&\dfrac{1}{2\pi}\dfrac{b}{(1+b^2)}\dfrac{1}{\sqrt{2+b^2}} =\dfrac{\theta}{4\pi}\dfrac{2-\theta^2}{\sqrt{4-\theta^2}}.\label{eqpart174}
			\end{eqnarray}
Therefore by Equations \eqref{eqpart172}, \eqref{eqpart173} and \eqref{eqpart174} we write,
\begin{eqnarray}
	T_1=T_{11}+T_{12}+T_{13}&=&\dfrac{1}{4}+\dfrac{1}{2\pi}\sin^{-1}\bigg(\dfrac{\theta}{2}\bigg)+\dfrac{\theta}{4\pi\sqrt{4-\theta^2}}+\dfrac{\theta(2-\theta^2)}{4\pi\sqrt{4-\theta^2}}\label{eqpart176}
\end{eqnarray}
Further we have
\begin{eqnarray*}
	T_2&=&\BE[Z_2\Phi(Z_2)\BE[\sqrt{1-\theta^2}X\Phi(Z_1)]|Z_2]\\
	&=&\BE[Z_2\Phi(Z_2)\sqrt{1-\theta^2}\BE[X\Phi(\theta Z_2+\sqrt{1-\theta^2}X)]|Z_2].
\end{eqnarray*}
Suppose $Z\sim N(0,1)$. Let us calculate $\BE[Z\Phi(a+cZ)]$
\begin{eqnarray}
	\BE[Z\Phi(a+cZ)]&=&\int z\Phi(a+cZ)\phi(z)dz\nonumber\\
	&=&\int \Phi(a+cz) (z\phi(z))dz\nonumber\\
	&=& \Phi(a+cz) (-\phi(z))\bigg|_{-\infty}^{\infty}-\int c \phi(a+cz) (-\phi(z))dz\nonumber\\
	&=& c\int \phi(z)\phi(a+cz)dz\nonumber\\
	&=&\dfrac{c}{\sqrt{2\pi}}\dfrac{1}{\sqrt{2\pi}}\int e^{-\dfrac{1}{2}(z^2+(a+cz)^2)}dz\nonumber\\
	&=&\dfrac{c}{\sqrt{2\pi}}e^{-\dfrac{1}{2}\dfrac{a^2}{1+c^2}}\dfrac{1}{\sqrt{1+c^2}}.\label{eqpart175}
\end{eqnarray}
Now by substitution $a=\theta Z_2$ and $c=\sqrt{1-\theta^2}$ in \eqref{eqpart175} we write
\begin{eqnarray}
	T_2&=&\BE[Z_2\Phi(Z_2)\sqrt{1-\theta^2}\BE[X\Phi(\theta Z_2+\sqrt{1-\theta^2}X)]|Z_2]\nonumber\\
	&=&\BE[Z_2\Phi(Z_2)\sqrt{1-\theta^2}\bigg(\dfrac{\sqrt{1-\theta^2}}{\sqrt{2\pi}}e^{-\dfrac{1}{2}\dfrac{\theta^2Z_2^2}{1+(1-\theta^2)}}\dfrac{1}{\sqrt{1+(1-\theta^2)}}\bigg)]\nonumber\\
	&=&\dfrac{(1-\theta^2)}{\sqrt{2\pi}\sqrt{2-\theta^2}}\BE[Z_2\Phi(z_2)e^{-\dfrac{1}{2}\dfrac{\theta^2Z_2^2}{(2-\theta^2)}}]\nonumber\\
	&=& \dfrac{1}{2\pi}\dfrac{(1-\theta^2)}{\sqrt{2-\theta^2}}\dfrac{1}{\sqrt2\pi}\int_{-\infty}^{\infty} e^{-\dfrac{x^2}{2}} \int_{x}^{\infty} t e^{-\dfrac{1}{2}\big(\dfrac{2 }{2-\theta^2}\big)t^2}  dtdx\nonumber\\
	&=&\dfrac{1}{2\pi}\dfrac{(1-\theta^2)}{\sqrt{2-\theta^2}}\dfrac{(2-\theta^2) }{2}\int_{-\infty}^{\infty}\dfrac{1}{\sqrt2\pi}e^{-\dfrac{x^2}{2}\bigg(1+\dfrac{2}{2-\theta^2}\bigg)}dx\nonumber\\
	&=&\dfrac{(1-\theta^2)\sqrt{2-\theta^2}}{4\pi}\sqrt{\dfrac{2-\theta^2}{4-\theta^2}}=\dfrac{(1-\theta^2)(2-\theta^2)}{4\pi\sqrt{4-\theta^2}}\label{eqpart177}
\end{eqnarray}
Now, by Equations \eqref{eqpart176} and \eqref{eqpart177} we obtain
\begin{eqnarray*}
	\BE(Z_1Z_2\Phi(Z_1)\Phi(Z_2))&=&T_1+T_2\\
	&=&\dfrac{1}{4}+\dfrac{1}{2\pi}\sin^{-1}\bigg(\dfrac{\theta}{2}\bigg)+\dfrac{\theta}{4\pi\sqrt{4-\theta^2}}+\dfrac{\theta(2-\theta^2)}{4\pi\sqrt{4-\theta^2}}+\dfrac{(1-\theta^2)(2-\theta^2)}{4\pi\sqrt{4-\theta^2}}.
\end{eqnarray*} 
This completes the proof of Lemma \ref{bvncalc}. 
\end{proof}

\noindent Using the above lemma, we have the following expressions for $\mu_{1}$, $\mu_{2}$, $\mu_{12}$, $\mu_{3}$ and $\kappa$.
\begin{lemma}	\label{bvncalc1}
    Suppose that $(X,Y)\sim N(m_1,m_2,\sigma_1^2,\sigma_2^2,\theta)$ with $F_1=N(m_1,\sigma_1^2)$ and $F_2=N(m_2,\sigma_2^2)$, and let $(X_1,Y_1)$, $(X_2,Y_2)$ be the i.i.d copies of $(X,Y)$. Then 
	\begin{enumerate}[label=(\roman*)]
		\item  $\mu_1 = \BE [|X_1-X_2|]
			=\dfrac{2\sigma_1}{\sqrt{\pi}}$, \label{part21}\\
		\item 	$\mu_2= \BE [|Y_1-Y_2|]
			=\dfrac{2\sigma_2}{\sqrt{\pi}},$\label{part22}\\
        \item 		$\mu_3=\BE_{F_{12}} [|X_1-X_2||Y_1-Y_3|\big]=\sigma_1\sigma_2\bigg[\dfrac{2}{\pi}\sqrt{\dfrac{1}{(4-\theta^2)}}+\dfrac{2\theta}{\pi} \sin^{-1}\bigg(\dfrac{\theta}{2}\bigg)+\dfrac{2(3-\theta^2)}{\pi\sqrt{4-\theta^2}} \bigg]$,\label{part23}\\
        \item 	$\mu_{12}=\BE_{F_{12}} \big[|X_1-X_2||Y_1-Y_2|\big]=\dfrac{4\sigma_1\sigma_2}{\pi}(\theta \sin^{-1}\theta+\sqrt{1-\theta^2}).$\label{part24}\\
        \item 	$\kappa (\theta)=\dfrac{1}{4}\Big[\mu_{12}+ \mu_1\mu_2-2\mu_{3}\Big]$.\label{part25}\\
		\item     \begin{equation}
                    \kappa (\theta) = \dfrac{\sigma_1\sigma_2}{\pi}\bigg[\theta \sin^{-1}\theta +\sqrt{1-\theta^2} + 1 - \theta \sin^{-1}\bigg(\dfrac{\theta}{2} \bigg) - \sqrt{4-\theta^2} \bigg]. \label{eq:bvnkappatheta}
                \end{equation}
	\end{enumerate}
\end{lemma}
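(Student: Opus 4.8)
The plan is to push everything back to the standardized pair $(Z_1,Z_2)\sim N(0,0,1,1,\theta)$ via $X=m_1+\sigma_1Z_1$, $Y=m_2+\sigma_2Z_2$, so that the locations $m_1,m_2$ drop out of every $|{\cdot}-{\cdot}|$ and each scale $\sigma_i$ factors out of the relevant expectation. With this reduction parts \ref{part21} and \ref{part22} are immediate, since for i.i.d.\ copies $X_1-X_2\sim N(0,2\sigma_1^2)$ gives $\mu_1=\BE|X_1-X_2|=\sqrt{2\sigma_1^2}\,\sqrt{2/\pi}=2\sigma_1/\sqrt\pi$, and symmetrically for $\mu_2$. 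Part \ref{part25} is not a computation at all: it is exactly the third displayed identity of Theorem \ref{theo:altkappa}, namely $\kappa=\tfrac14(\mu_{12}-2\mu_3+\mu_1\mu_2)$.

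For \ref{part24} I would note that $(X_1-X_2,\,Y_1-Y_2)$ is itself bivariate normal with mean zero, variances $2\sigma_1^2,2\sigma_2^2$, and covariance $\BCov(X_1,Y_1)+\BCov(X_2,Y_2)=2\theta\sigma_1\sigma_2$, hence correlation exactly $\theta$. Standardizing to $(U,V)\sim N(0,0,1,1,\theta)$ gives $\mu_{12}=2\sigma_1\sigma_2\,\BE|UV|$, and it remains to supply the classical standard-normal identity $\BE|UV|=\tfrac{2}{\pi}\bigl(\theta\sin^{-1}\theta+\sqrt{1-\theta^2}\bigr)$ (provable by the same conditioning technique used throughout Lemma \ref{bvncalc}, or simply quoted), which delivers the stated $\mu_{12}$.

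The crux, and the step I expect to be the main obstacle, is \ref{part23}. First I would record the closed form of the kernel: since $g_{F_1}(x)=\BE|x-X'|$ for $X'\sim N(m_1,\sigma_1^2)$, the mean-absolute-deviation formula yields $g_{F_1}(X)=\sigma_1\bigl(2\phi(Z_1)+Z_1(2\Phi(Z_1)-1)\bigr)$ and likewise $g_{F_2}(Y)=\sigma_2\bigl(2\phi(Z_2)+Z_2(2\Phi(Z_2)-1)\bigr)$, so that $\mu_3=\sigma_1\sigma_2\,\BE\bigl[(2\phi(Z_1)+Z_1(2\Phi(Z_1)-1))(2\phi(Z_2)+Z_2(2\Phi(Z_2)-1))\bigr]$. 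Expanding this product into its nine cross terms, the plan is to match each one to a moment already computed in Lemma \ref{bvncalc}: the $\phi(Z_1)\phi(Z_2)$ term uses part \ref{part15}; the four mixed terms of type $\phi(Z_i)Z_j\Phi(Z_j)$ and $\phi(Z_i)Z_j$ use parts \ref{part16} and \ref{part13}; and the quadratic block $Z_1Z_2(2\Phi(Z_1)-1)(2\Phi(Z_2)-1)$ splits into the expectations handled by parts \ref{part17}, \ref{part14} and \ref{part12}. The delicate point is keeping the overall factor $\theta$ that multiplies the $T_1$-block of part \ref{part17} (visible from its derivation as $T_1=\theta\,\BE[Z^2\Phi(Z)\Phi(bZ)]$): it is precisely this $\theta$ that forces the constant and the linear-in-$\theta$ contributions to cancel, and without it a spurious additive constant survives. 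After this cancellation the remaining rational terms combine over $\pi\sqrt{4-\theta^2}$ with numerator collapsing to $2(4-\theta^2)$, i.e.\ to $2\sqrt{4-\theta^2}/\pi$, leaving $\mu_3$ in the claimed form. I would confirm the bookkeeping by the independence check at $\theta=0$, where necessarily $\mu_3=\mu_1\mu_2=4\sigma_1\sigma_2/\pi$.

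Finally, for the last item \eqref{eq:bvnkappatheta} I would substitute the four moments into $\kappa=\tfrac14(\mu_{12}-2\mu_3+\mu_1\mu_2)$. Using $\mu_1\mu_2=4\sigma_1\sigma_2/\pi$ and the simplification $\tfrac{2+2(3-\theta^2)}{\sqrt{4-\theta^2}}=2\sqrt{4-\theta^2}$ to rewrite $\mu_3=\tfrac{2\sigma_1\sigma_2}{\pi}\bigl(\theta\sin^{-1}(\theta/2)+\sqrt{4-\theta^2}\bigr)$, and then pulling out the common factor $\sigma_1\sigma_2/\pi$, the three pieces assemble directly into \eqref{eq:bvnkappatheta}. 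This closing step is purely algebraic once \ref{part23} is established.
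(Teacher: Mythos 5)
Your proposal is correct and follows essentially the same route as the paper: reduce to the standardized pair, obtain $g_{F_i}$ in the form $\sigma_i(2\phi(Z)+Z(2\Phi(Z)-1))$, expand $\mu_3=\BE[g_{F_1}(X)g_{F_2}(Y)]$ into the moments of Lemma \ref{bvncalc}, compute $\mu_{12}$ from the bivariate normal law of $(X_1-X_2,Y_1-Y_2)$ (the paper derives $\BE|UV|$ by polar coordinates where you would quote it), and assemble via Theorem \ref{theo:altkappa}. Your flag about the factor $\theta$ on the $T_1$-block of Lemma \ref{bvncalc}\ref{part17} is well taken --- the displayed statement of that part omits it, but the paper silently restores it (writing $4\theta[\cdots]$) when computing $\mu_3$, exactly as your proposal prescribes.
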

\begin{proof} To prove parts \ref{part21} and \ref{part22}, let us
		suppose $X$ is normally distributed with mean $\mu$ and variance $\sigma^2$.  
		$$g_F(x)=\BE_{F}[|x-X|]$$
		Note that $$\dfrac{x-X}{\sigma}\sim N(\dfrac{x-\mu}{\sigma},1)$$
		\begin{eqnarray*}
			g_F(x)&=&\sigma\BE_{F}[|\dfrac{x-\mu}{\sigma}-\dfrac{X-\mu}{\sigma}|]\\&=&\sigma\BE_{F}[|\dfrac{x-\mu}{\sigma}-Z|]
		\end{eqnarray*}
		where $Z\sim N(0,1).$
		So, it is enough to calculate $\BE[|z-Z|]$ for $z\in \mathbb{R}$
		\begin{eqnarray*}
			\BE[|z-Z|]&=&\int_{-\infty}^{\infty}|z-t|\phi(t)dt\\
			&=&\int_{-\infty}^{z}(z-t)\phi(t)dt+\int_{z}^{\infty}(t-z)\phi(t)dt\\
			&=&z\Phi(z)-\int_{-\infty}^{\infty}t\phi(t)dt+\int_{z}^{\infty}t\phi(t)dt+\int_{z}^{\infty}t\phi(t)dt-z[1-\Phi(z)]\\
			&=&-z+2z\Phi(z)+2\int_{z}^{\infty}t\phi(t)dt\\
			&=&-z+2z\Phi(z)+2\phi(z).
		\end{eqnarray*}
		and hence
		$$g_{F}(x)=\sigma\bigg[-\dfrac{x-\mu}{\sigma}+2\big(\dfrac{x-\mu}{\sigma}\big)\Phi\big(\dfrac{x-\mu}{\sigma}\big)+2\phi\big(\dfrac{x-\mu}{\sigma}\big)\bigg].$$ where $\Phi$ and $\phi$ are distribution function and density functions of standard normal respectively.
		Further let ${Z}\sim N(0,1)$ and 
		\begin{eqnarray*}
			h&=&\BE[-{Z}+2{Z}\Phi({Z})+2\phi({Z})]\\
			&=&2\BE[{Z}\Phi({Z})+\phi({Z})].
		\end{eqnarray*}
		\begin{eqnarray*}
			\BE[{Z}\Phi({Z})]&=&\int_{-\infty}^{\infty}t\Phi(t)\phi(t)dt\\
			&=&\int_{-\infty}^{\infty}t\phi(t)\big[\int_{-\infty}^{t}\phi(x)dx\big]dt=\int_{-\infty}^{\infty}\phi(x)[\int_{x}^{\infty}t\phi(t)dt]dx,\hspace{0.5cm} x\leq t \iff t\geq x\\
			&=&\int_{-\infty}^{\infty}\phi(x)[-\phi(t)]\Big|_{x}^{\infty}dx\\&=&\int_{-\infty}^{\infty}\phi^2(x)dx=\BE\phi(Z).
		\end{eqnarray*}	
		Therefore 
		\begin{eqnarray*}
			h&=&2\BE[Z\Phi(Z)+\phi(Z)]\\
			&=&2[\BE[Z\Phi(Z)+\BE[\phi(Z)]]\\
			&=&2[[\BE[\phi(Z)]+\BE[\phi(Z)]]=4\BE[\phi(Z)],
		\end{eqnarray*}
		
		and since $$\BE\phi(Z)=\dfrac{1}{2\sqrt{\pi}}\hspace{1cm}, h=4\dfrac{1}{2\sqrt{\pi}}=\dfrac{2}{\sqrt{\pi}}.$$
		Therefore,
		$$g(F)=\BE_{F}(g_F(X))=\BE_{F}[\sigma\BE[-Z+2Z\Phi(Z)+2\phi(Z)]]=\sigma h=\dfrac{2\sigma}{\sqrt{\pi}}.$$
		Similarly we get with $X_1\sim N(m_1,\sigma_1^2)$ and $X_2\sim N(m_2,\sigma_2^2).$

			$$\mu_1=\BE_{F_1}(g_F(X_1))=\dfrac{2\sigma_1}{\sqrt{\pi}}.$$
			$$\mu_2=\BE_{F_2}(g_F(Y_1))=\dfrac{2\sigma_2}{\sqrt{\pi}}.$$
			Proof of part \ref{part23}
			Now suppose that $(X,Y)\sim N(m_1,m_2,\sigma_1^2,\sigma_2^2,\theta)$ with $F_1=N(m_1,\sigma_1^2)$ and $F_2=N(m_2,\sigma_2^2)$, and let $(X_1,Y_1)$, $(X_2,Y_2)$ be the i.i.d copies of $(X,Y)$ and $(Z_1,Z_2)\sim N(0,0,1,1,\theta)$. Using the results of Lemma \ref{bvncalc} we obtain,
			\begin{eqnarray*}
				\mu_3&=&\BE_{F_{12}} [|X_1-X_2||Y_1-Y_3|\big]\\
				&=&\BE_{F_{12}} [\BE[|X_1-X_2||Y_1-Y_3|\mid (X_1,Y_1)]\big]\\
				&=&\BE_{F_{12}} [\BE[|X_1-X_2|\mid X_1]\BE[|Y_1-Y_3|\mid Y_1]\big]\\
				&=&\BE\big[g_{F_{1}}(X)g_{F_{2}}(Y)\big]
				\\&=&\sigma_1\sigma_2\BE_{F_{12}}\big[(-Z_1+2Z_1\Phi(Z_1)+2\phi(Z_1))(-Z_2+2Z_2\Phi(Z_2)+2\phi(Z_2))\big]\\
				&=&\sigma_1\sigma_2\BE[Z_1Z_2-2Z_1Z_2\Phi(Z_2)-2Z_1\phi(Z_2)-2Z_1Z_2\Phi(Z_1)+4Z_1Z_2\Phi(Z_1)\Phi(Z_2)\\ && \ +4Z_1\Phi(Z_1)\phi(Z_2)-2Z_2\phi(Z_1)+4Z_2\Phi(Z_2)\phi(Z_1)+4\phi(Z_1)\phi(Z_2)]\\
				&=&\sigma_1\sigma_2\big[\BE(Z_1Z_2)-4\BE(Z_1Z_2\Phi(Z_1))-4\BE(Z_1\phi(Z_2))+8\BE(Z_1\Phi(Z_1)\phi(Z_2))\\ && \ +4\BE(\phi(Z_1)\phi(Z_2))+4\BE(Z_1Z_2\Phi(Z_1)\Phi(Z_2))\big]\\
				&=&\sigma_1\sigma_2\bigg[\theta-2\theta+0+8\dfrac{2-\theta^2}{4\pi\sqrt{4-\theta^2}}+\dfrac{2}{\pi}\sqrt{\dfrac{1}{(4-\theta^2)}}\nonumber \\ && \
				+4\theta\bigg[\dfrac{1}{4}+\dfrac{1}{2\pi}\sin^{-1}\bigg(\dfrac{\theta}{2}\bigg)+\dfrac{\theta}{4\pi\sqrt{4-\theta^2}}+\dfrac{\theta(2-\theta^2)}{4\pi\sqrt{4-\theta^2}}\bigg]+4\dfrac{(1-\theta^2)(2-\theta^2)}{4\pi\sqrt{4-\theta^2}}\bigg]\nonumber \\
				&=&\sigma_1\sigma_2\bigg[\dfrac{2}{\pi}\sqrt{\dfrac{1}{(4-\theta^2)}}+\dfrac{2\theta}{\pi} \sin^{-1}\bigg(\dfrac{\theta}{2}\bigg)+\dfrac{2(3-\theta^2)}{\pi\sqrt{4-\theta^2}}\\
				&=&\dfrac{2\sigma_1\sigma_2}{\pi}\bigg[\theta \sin^{-1}\bigg(\dfrac{\theta}{2}\bigg)+\dfrac{1+(3-\theta^2)}{\sqrt{4-\theta^2}}
				\bigg]=\dfrac{2\sigma_1\sigma_2}{\pi}\left[\theta \sin^{-1}\bigg(\dfrac{\theta}{2}\bigg)+\sqrt{4-\theta^2}
				\right]\\
			\end{eqnarray*}
					Proof of part \ref{part24}
			 We have,
			\begin{eqnarray*} 
				\mu_{12}=	g({F_{12}}) &=& \BE_{F_{12}} \big[ g_{F_{12}} (X_2, Y_2)\big]\\
				&=& \BE_{F_{12}} \Big[\BE_{F_{12}} \big[|x-X_1||y-Y_1|\big]\Big]\\
				&=&\BE_{F_{12}} \big[|X_1-X_2||Y_1-Y_2|\big].
			\end{eqnarray*}
			Now, let $Z_1=X_1-X_2$ and $Z_2=Y_1-Y_2.$ We have $\BE(Z_1)=\BE(Z_2)=0$ and $Var(Z_1)=2\sigma_1^2$ and $Var(Z_2)=2\sigma_2^2$ and $Cov(Z_1,Z_2)=2\theta\sigma_1\sigma_2.$
			We get $\mu_{12}=\BE\big[|Z_1||Z_2|\big].$ 
			The cumulative distribution function of $(|Z_1|,|Z_2|)$ is given by
			$$F_{|Z_1|,|Z_2|}(z_1,z_2)=F_{Z_1,Z_2}(z_1,z_2)-F_{Z_1,Z_2}(z_1,-z_2)-F_{Z_1,Z_2}(-z_1,z_2)+F_{Z_1,Z_2}(-z_1,-z_2)$$
			The probability density function of $(|Z_1|,|Z_2|)$ is obtained by the mixed partial derivatives of the above.
			$$f_{|Z_1|,|Z_2|}(z_1,z_2)=f_{Z_1,Z_2}(z_1,z_2)+f_{Z_1,Z_2}(z_1,-z_2)+f_{Z_1,Z_2}(-z_1,z_2)+f_{Z_1,Z_2}(-z_1,-z_2)$$ 
			\begin{eqnarray*}
				f_{|Z_1|,|Z_2|}(z_1,z_2)&=&2\frac{1}{2\pi(2\sigma_1\sigma_2)\sqrt{1-\theta^2}}\exp\biggl\{-\dfrac{1}{2(1-\theta^2)}\bigg(\dfrac{z_1^2}{2\sigma_1^2}+\dfrac{z_2^2}{2\sigma_2^2}-\frac{2\theta z_1z_2}{2\sigma_1\sigma_2}\bigg)\biggr\}\\ && \ + 2\frac{1}{2\pi(2\sigma_1\sigma_2)\sqrt{1-\theta^2}}\exp\biggl\{-\dfrac{1}{2(1-\theta^2)}\bigg(\dfrac{z_1^2}{2\sigma_1^2}+\dfrac{z_2^2}{2\sigma_2^2}+\frac{2\theta z_1z_2}{2\sigma_1\sigma_2}\bigg)\biggr\}\\
				&=&\frac{1}{2\pi(\sigma_1\sigma_2)\sqrt{1-\theta^2}}\exp\biggl\{-\dfrac{1}{2(1-\theta^2)}\bigg(\dfrac{z_1^2}{2\sigma_1^2}+\dfrac{z_2^2}{2\sigma_2^2}\bigg)\biggr\} 2 cosh\bigg(\dfrac{\theta z_1z_2}{(1-\theta^2)2\sigma_1\sigma_2}\bigg).
			\end{eqnarray*}
			\begin{eqnarray*}
				\mu_{12}&=&\BE\big[|Z_1||Z_2|\big]\\
				&=&\int_{-\infty}^{\infty}\int_{-\infty}^{\infty}|z_1||z_2|f_{Z_1,Z_2}(z_1,z_2)dz_1dz_2\\
				&=&\int_{0}^{\infty}\int_{0}^{\infty}z_1z_2f_{|Z_1|,|Z_2|}(z_1,z_2)dz_1dz_2\\
				&=&\int_{0}^{\infty}\int_{0}^{\infty}z_1z_2\dfrac{1}{2\pi(\sigma_1)(\sigma_2)\sqrt{1-\theta^2}}\exp\biggl\{-\dfrac{1}{2(1-\theta^2)}\bigg(\dfrac{z_1^2}{2\sigma_1^2}+\dfrac{z_2^2}{2\sigma_2^2}\bigg)\biggr\}\\ && \ \ \ \ \ \ \ \ \ \ \ \ \ \ \ \ \ \ \ \ \ \ \ \ \ \ \ \ \ \ \ \ \ \ \ cosh\bigg(\dfrac{\theta z_1z_2}{(1-\theta^2)2\sigma_1\sigma_2}\bigg)dz_1dz_2.
			\end{eqnarray*}
			Now, by switching to polar coordinates with 
			$$\frac{z_1}{2\sqrt{1-\theta^2\sigma_1}}=r\cos\alpha, \hspace{5mm} \frac{z_2}{2\sqrt{1-\theta^2\sigma_2}}=r\sin\alpha, \hspace{5mm} dz_1dz_2=4(1-\theta^2)\sigma_1\sigma_2rdrd\alpha,$$
			we get,
			$$\mu_{12}=\frac{4(1-\theta^2)^{3/2}\sigma_1\sigma_2}{\pi}\int_0^{\pi/2}\int_0^\infty r^2 sin2\alpha [e^{-r^2(1-\theta sin2\alpha)}+e^{-r^2 (1+\theta sin2\alpha)}]r drd\alpha.$$
			Let $$r^2(1-\theta sin2\alpha)=u, \hspace{0.5cm}rdr=\frac{du}{2(1-\theta sin2\alpha)}.$$
			\begin{eqnarray*}
				\mu_{12}&=&\frac{4(1-\theta^2)^{3/2}\sigma_1\sigma_2}{\pi}\int_0^{\pi/2} \frac{sin2\alpha}{(1-\theta sin2\alpha)^2}\bigg[\int_0^\infty ue^{-u}du+\int_0^\infty ue^{-u\big(\frac{1+\theta sin2\alpha}{1-\theta sin2\alpha}\big)}du \bigg]d\alpha\\
				&=&\frac{4(1-\theta^2)^{3/2}\sigma_1\sigma_2}{\pi}\int_0^{\pi/2} \frac{sin2\alpha}{(1-\theta sin2\alpha)^2}\bigg[1+\bigg(\frac{1-\theta sin2\alpha}{1+\theta sin2\alpha}\bigg)^2\bigg]d\alpha\\
				&=&\frac{4(1-\theta^2)^{3/2}\sigma_1\sigma_2}{\pi}\int_0^{\pi/2}\bigg[ \frac{sin2\alpha}{(1-\theta sin2\alpha)^2}+\frac{sin2\alpha}{(1+\theta sin2\alpha)^2}\bigg]d\alpha\\
				&=&\dfrac{4\sigma_1\sigma_2}{\pi}(\theta \sin^{-1}\theta+\sqrt{1-\theta^2}).
			\end{eqnarray*}
				The expression in Part \ref{part25} can be obtained by combining \ref{part21}, \ref{part22}, \ref{part23}, and, \ref{part24}. This completes the proof of Lemma \ref{bvncalc1}.
		\end{proof}
		It is easily seen that when 
	 $\theta=0$, the above expressions yield
		\begin{equation*}
			\mu_{3}= 
			\dfrac{2\sigma_1}{\sqrt{\pi}}\dfrac{2\sigma_2}{\sqrt{\pi}}=\mu_1\mu_2, \ \ \mu_{12}=\dfrac{4\sigma_1\sigma_2}{\pi}=\mu_1\mu_2.
	\end{equation*}
Hence 
\begin{eqnarray*}
	\kappa (0)&=&\dfrac{1}{4}\bigg[\dfrac{4\sigma_1\sigma_2}{\pi}+\dfrac{4\sigma_1\sigma_2}{\pi}-2\big(\dfrac{4\sigma_1\sigma_2}{\pi}\big)\bigg]=0.
\end{eqnarray*}
	\begin{figure}[H]
	\centering
	\includegraphics[width=0.6\linewidth]{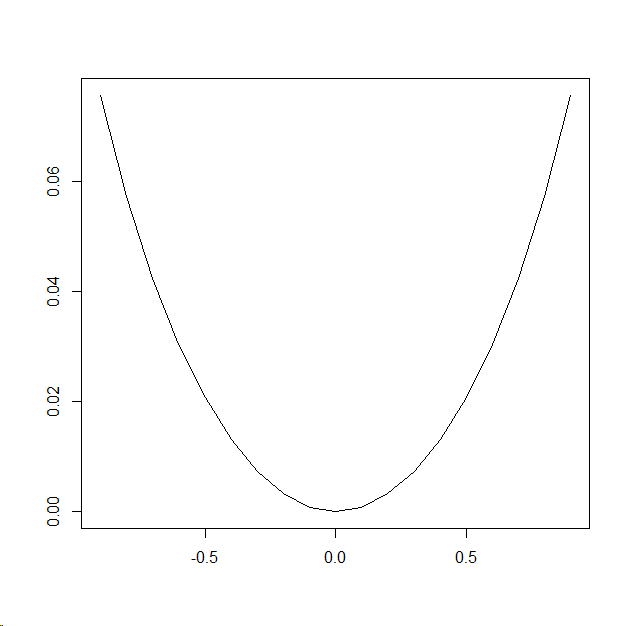}
	\caption{Plot of ($\theta$, $\kappa(\theta)$) for Bivariate normal distribution}
	\label{fig:kappathethabvn}
\end{figure}
\begin{lemma}
	 $\kappa(\theta)$ is a convex function of $\theta\in (-1, \ 1)$ when $(X, Y)$ is bivariate normal.
\end{lemma}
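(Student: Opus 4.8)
The plan is to argue directly from the closed-form expression \eqref{eq:bvnkappatheta} for $\kappa(\theta)$ established in Lemma \ref{bvncalc1}. Since the prefactor $\sigma_1\sigma_2/\pi$ is a strictly positive constant, convexity on $(-1,1)$ is equivalent to showing that the bracketed function has nonnegative second derivative there. I would therefore set
$$g(\theta) = \theta \sin^{-1}\theta + \sqrt{1-\theta^2} + 1 - \theta \sin^{-1}\!\left(\tfrac{\theta}{2}\right) - \sqrt{4-\theta^2},$$
so that $\kappa(\theta) = (\sigma_1\sigma_2/\pi)\,g(\theta)$, and reduce the whole statement to proving $g''(\theta) > 0$ for $\theta \in (-1,1)$.

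The computation is a term-by-term differentiation, for which the only two nontrivial building blocks are the identities
$$\frac{d}{d\theta}\frac{\theta}{\sqrt{1-\theta^2}} = \frac{1}{(1-\theta^2)^{3/2}}, \qquad \frac{d}{d\theta}\frac{\theta}{\sqrt{4-\theta^2}} = \frac{4}{(4-\theta^2)^{3/2}},$$
together with $\frac{d}{d\theta}\sin^{-1}(\theta/2) = 1/\sqrt{4-\theta^2}$. Using these with the product and chain rules, I would compute the second derivative of each of the five summands of $g$: the term $\theta\sin^{-1}\theta$ contributes $(1-\theta^2)^{-1/2} + (1-\theta^2)^{-3/2}$, the term $\sqrt{1-\theta^2}$ contributes $-(1-\theta^2)^{-3/2}$, the term $-\theta\sin^{-1}(\theta/2)$ contributes $-(4-\theta^2)^{-1/2} - 4(4-\theta^2)^{-3/2}$, and the term $-\sqrt{4-\theta^2}$ contributes $+4(4-\theta^2)^{-3/2}$, the constant $1$ contributing nothing.

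The point of the calculation is the cancellation: the two $(1-\theta^2)^{-3/2}$ contributions cancel, as do the two $4(4-\theta^2)^{-3/2}$ contributions, leaving the clean expression
$$\kappa''(\theta) = \frac{\sigma_1\sigma_2}{\pi}\left(\frac{1}{\sqrt{1-\theta^2}} - \frac{1}{\sqrt{4-\theta^2}}\right).$$
Since $0 < 1 - \theta^2 < 4 - \theta^2$ for every $\theta \in (-1,1)$, we have $\sqrt{1-\theta^2} < \sqrt{4-\theta^2}$, so the bracket is strictly positive and $\kappa''(\theta) > 0$ throughout, which gives (strict) convexity. I do not anticipate a genuine obstacle here: the only care required is the bookkeeping in differentiating the two $\arcsin$ terms and the two square roots, and the real content is simply that the higher-order singular pieces cancel in pairs, leaving a manifestly positive remainder. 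As a sanity check one could first inspect $\kappa'(\theta)$ graphically before presenting the exact cancellation, but the exact computation above is short enough to carry out in full.
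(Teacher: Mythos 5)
Your proposal is correct and follows essentially the same route as the paper: both start from the closed form \eqref{eq:bvnkappatheta} and differentiate twice to obtain $\kappa''(\theta)=\frac{\sigma_1\sigma_2}{\pi}\bigl[(1-\theta^2)^{-1/2}-(4-\theta^2)^{-1/2}\bigr]\ge 0$. The only cosmetic difference is that the paper first simplifies $\kappa'(\theta)$ to $\frac{\sigma_1\sigma_2}{\pi}\bigl[\sin^{-1}\theta-\sin^{-1}(\theta/2)\bigr]$ before differentiating again, whereas you carry the cancellations at the second-derivative stage; the content is identical.
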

\begin{proof}
	Recall that in this case, we have shown that 
	\begin{equation}
		\kappa (\theta)	=\dfrac{\sigma_1\sigma_2}{\pi}\left(\theta \sin^{-1}\theta+\sqrt{1-\theta^2}+1-{\theta}\sin^{-1}\bigg(\dfrac{\theta}{2}\bigg)-\sqrt{{4-\theta^2}}\right) \label{kappabn}
	\end{equation}
	From equation \eqref{kappabn}, we get 
	\begin{eqnarray*}
		\kappa^\prime(\theta)
		&=&\dfrac{\sigma_1\sigma_2}{\pi}\left[\left(\sin^{-1}\theta+\dfrac{\theta}{\sqrt{1-\theta^2}}\right)-\dfrac{\theta}{\sqrt{1-\theta^2}}-\left[\sin^{-1}\left(\theta/2\right)+\theta \dfrac{1}{\sqrt{1-\big(\frac{\theta}{2}\big)^2}}\times \frac{1}{2}\right]+\dfrac{\theta}{\sqrt{4-\theta^2}}\right]\\
		&=&\dfrac{\sigma_1\sigma_2}{\pi}\left(\sin^{-1}\theta+\dfrac{\theta}{\sqrt{1-\theta^2}}-\dfrac{\theta}{\sqrt{1-\theta^2}}-\sin^{-1}\left(\theta/2\right)-\dfrac{\theta}{\sqrt{4-\theta^2}}+\dfrac{\theta}{\sqrt{4-\theta^2}}\right)\\
		&=&\dfrac{\sigma_1\sigma_2}{\pi}\bigg[\sin^{-1}\theta-\sin^{-1}\left(\theta/2\right)\bigg].
	\end{eqnarray*}
	Now, it easily follows that the second derivative of $\kappa(\theta)$ is 
	\begin{eqnarray}
		\kappa^{\prime\prime}(\theta)&=&\dfrac{\sigma_1\sigma_2}{\pi}\bigg[\dfrac{1}{\sqrt{1-\theta^2}}-\dfrac{1}{\sqrt{1-\big(\frac{\theta}{2}\big)^2}}\times \frac{1}{2}]\nonumber\\&=&
		\dfrac{\sigma_1\sigma_2}{\pi}\bigg[\dfrac{1}{\sqrt{1-\theta^2}}-\dfrac{1}{\sqrt{4-\theta^2}}\bigg]\geq 0, \ \text{since}\ \ \theta\in (-1, \ 1).\label{kappa2bvn}
	\end{eqnarray}
We can also see that the result in \eqref{kappa2bvn} follows from the plot of  $\kappa^{\prime\prime}(\theta)$ vs $\theta$ provided in Figure \ref{fig:kappadrvtvbvn}.	
	\begin{figure}[H]
		\centering
		\includegraphics[width=0.5\linewidth]{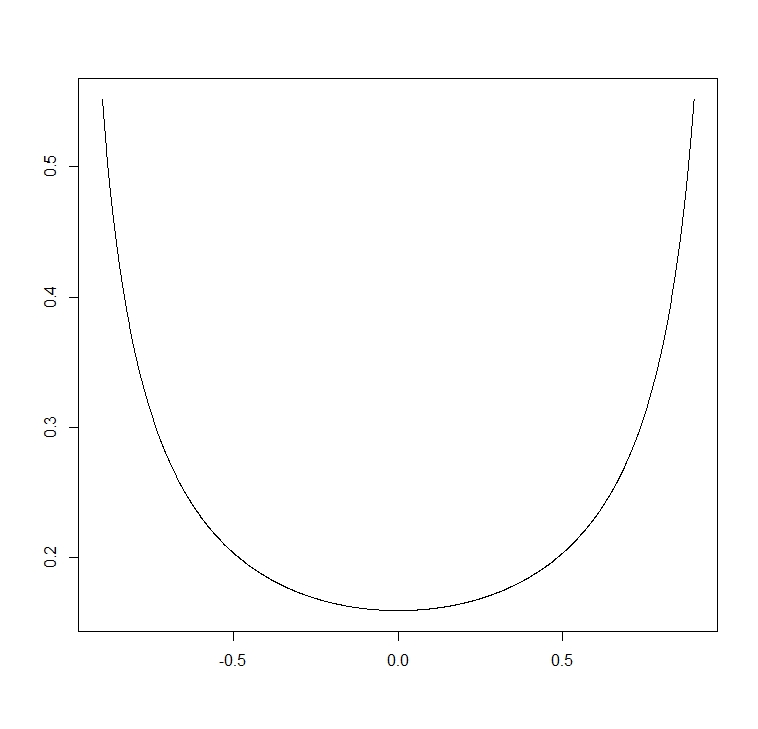}
		\caption{Plot of ($\theta$, $\kappa^{\prime\prime}(\theta)$) for $\theta \in (-1,1)$ for BVN.}
		\label{fig:kappadrvtvbvn}
	\end{figure}
Hence $\kappa(\theta)$ is a convex function of $\theta \in (-1,\ 1)$ when $(X,Y)$ follows bivariate normal distribution. 
\end{proof}
\subsection{Estimation of $\kappa$}

Now suppose we have a sample $\{(X_i, Y_i)\}$ which are i.i.d.~with distribution $F_{12}$ and the respective marginal distributions $F_1$ and $F_2$. Note that the last but one expression of $\kappa$ in Theorem \ref{theo:altkappa}, has \textit{four} unknown quantities.
The appropriate $U$-statistics unbiased estimates of the three quantities $\mu_1$, $\mu_2$ and $\mu_{12}$ are given by 

\begin{eqnarray*} 
U_{1n} &:=&  {n\choose 2}^{-1}\sum_{1\leq i < j \leq n} |X_i-X_j| \ \text{(for}\ \mu_1),\\
U_{2n} &:=&  {n\choose 2}^{-1}\sum_{1\leq i < j \leq n} |Y_i-Y_j| \  \text{(for}\ \mu_2), \\
U_{12n}&:=& {n\choose 2}^{-1}\sum_{1\leq i < j \leq n} |X_i-X_j|\ |Y_i-Y_j| \ \text{(for}\ \mu_{12}).\end{eqnarray*}

To obtain a $U$-statistic unbiased estimate of the fourth quantity $\mu_3$, 
define the function $f$ as  
$$f\big( (x_1, y_1), (x_2, y_2), (x_3, y_3)\big)=|x_1-x_2|\ |y_1-y_3|.$$
Now, clearly since 	Since $X_2$ and $Y_3$ are independent we have,
	\begin{eqnarray*}
	\BE\big[f\big( (X_1, Y_1), (X_2, Y_2), (X_3, Y_3)\big)\big]&=&\BE\big[\BE[f\big( (X_1, Y_1), (X_2, Y_2), (X_3, Y_3)\big)] | (X_1,Y_1)\big]\\&=&\BE\big[\BE[|X_1-X_2|\ |Y_1-Y_3||] \mid (X_1,Y_1)\big]\\&=&\BE\big[\BE[|X_1-X_2| |X_1]\ \BE [|Y_1-Y_3|| | Y_1] \big]\\&=& \BE\big[g_{F_{1}}(X_1) g_{F_{2}}(Y_1) \big]=\mu_3
\end{eqnarray*} 

However, 
$f$ is not invariant under the permutation of the indices $1, 2, 3$. Hence the appropriate $U$-statistics unbiased estimate of $\mu_3$ is obtained by symmetrising $f$. So, let $h$ be the symmetrised version of $f$:
\begin{eqnarray}
h\big( (x_1, y_1), (x_2, y_2), (x_3, y_3)\big) &=&
\dfrac{1}{6}\big[
|x_1-x_2|\ |y_1-y_3|
+|x_2-x_1|\ |y_2-y_3|\nonumber\\
&&\  
+|x_1-x_3|\ |y_1-y_2|+|x_2-x_3|\ |y_2-y_1|\nonumber\\
&&\  
+|x_3-x_1|\ |y_3-y_2|
+|x_3-x_2|\ |y_3-y_1|\big].\label{eq:symmetrich}
\end{eqnarray}

Let 
$$U_{3n}:= {n\choose 3}^{-1}\sum_{1\leq i < j < k \leq n}	h\big( (X_i, Y_i), (X_j, Y_j), (X_k, Y_k)\big).$$

Then by construction, 
$$\BE_{F_{12}}[U_{3n}]=\mu_3=\BE_{F_{12}}\big[g_{F_{1}}(X)g_{F_{2}}(Y)\big].$$  
Thus in view of Theorem \ref{theo:altkappa}, we define an estimate of $\kappa$ as,
$$\kappa^*=\dfrac{1}{4}\big[
U_{12n}+ U_{1n} U_{2n}- 2 U_{3n}\big].$$

Incidentally, $\kappa^{*}$ is not an unbiased estimate of $\kappa$, because $U_{1n} U_{2n}$ is not in general an unbiased estimate of $\mu_1\mu_2$. However, by the SLLN for $U$-statistics, $\kappa^*$ is a strongly consistent estimate of $\kappa$. Since this estimator is a nice explicit function of four $U$-statistics, its asymptotic properties can be derived using the theory of $U$-statistics.
When $\{X_i\}$ and $\{Y_i\}$ are independent of each other, $U_{1n}$ and $U_{2n}$ are independent. In that case $U_{1n}U_{2n}$ is an unbiased estimate of $\mu_1\mu_2$. Moreover $\mu_{12}=\mu_3=\mu_1\mu_2$. Hence $\kappa=0=\BE_{F_{1}\otimes F_{2}}[\kappa^{*}]$.

\subsection{Relation between $\kappa^*$, $\tilde\kappa$,  and $\tilde\kappa$}\label{kappastartilde}

Now we explore the relation between $\kappa^*$, $\tilde\kappa$, and $\hat{\kappa}$. We need the following lemma.

\begin{lemma}[\cite{lee2019u}]\label{VtoUtherm}
Let $V_n$ be a $V$-statistic based on a symmetric kernel $\psi$ of degree $k:$
\begin{equation*}
	V_n=n^{-k}\sum_{i_1=1}^{n}\ldots\sum_{i_k=1}^{n}\psi(X_{i_1},\ldots,X_{i_k}).
\end{equation*} 

Then 
\begin{equation*}
	V_n:=n^{-k}\sum_{j=1}^{k}j!S_{k}^{(j)}{n \choose j}U_n^{(j)},
\end{equation*}
where $U_n^{(j)}$ is a $U$-statistic of degree $j$ with kernel 
	\[	\phi_{(j)}(x_1,\ldots,x_j):=(j!S_{k}^{(j)})^{-1} \sum_{(j)}^{*}\psi(x_{i_1},\ldots,x_{i_k}),
	\]
and the sum $\sum_{(j)}^{*}$ is taken over all $k$-tuples $(i_1,\ldots,i_k)$ formed from ${1,2,\ldots,j}$ having exactly $j$ distinct indices, and $S_{k}^{(j)}$ are Stirling numbers of the second kind.
\end{lemma}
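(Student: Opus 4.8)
The plan is to prove the identity by a purely combinatorial reorganization of the $n^k$ summands defining $V_n$, classifying each multi-index $(i_1,\ldots,i_k)\in\{1,\ldots,n\}^k$ according to the number $j$ of \emph{distinct} values it contains. Since such a tuple uses between $1$ and $k$ distinct indices, this splits the defining sum as $V_n=n^{-k}\sum_{j=1}^{k}\Sigma_j$, where $\Sigma_j$ collects exactly those terms whose index tuple has precisely $j$ distinct values. The whole argument then reduces to evaluating each block $\Sigma_j$ and recognizing it as a scalar multiple of a $U$-statistic of degree $j$.

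First I would factor each multi-index contributing to $\Sigma_j$ into two independent pieces: its support, the unordered set $\{a_1<\cdots<a_j\}\subseteq\{1,\ldots,n\}$ of values actually used, and its repetition \emph{pattern}, i.e.\ the surjective map from the $k$ positions onto those $j$ values. For a fixed support, the increasing enumeration $a_1<\cdots<a_j$ gives a canonical bijection with the abstract labels $\{1,\ldots,j\}$, so summing $\psi$ over all admissible patterns for that support reproduces exactly $j!S_k^{(j)}\phi_{(j)}(X_{a_1},\ldots,X_{a_j})$ by the very definition of $\phi_{(j)}$. The Stirling number enters through the count of patterns: the number of surjections from a $k$-element set onto a $j$-element set is $j!S_k^{(j)}$, which is simultaneously the number of patterns and the normalizing constant in $\phi_{(j)}$, so the two cancel. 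Summing over the $\binom{n}{j}$ supports then gives $\Sigma_j=j!S_k^{(j)}\sum_{a_1<\cdots<a_j}\phi_{(j)}(X_{a_1},\ldots,X_{a_j})=j!S_k^{(j)}\binom{n}{j}U_n^{(j)}$, and substituting back yields the stated formula. To confirm that $U_n^{(j)}$ is a genuine $U$-statistic, I would check that $\phi_{(j)}$ is symmetric: permuting $x_1,\ldots,x_j$ merely relabels the target of each surjection in the defining sum, and since the collection of surjections onto $\{1,\ldots,j\}$ is invariant under such relabeling, $\phi_{(j)}$ is unchanged. I would also record that $S_k^{(j)}=0$ for $j>k$, consistent with the range of the outer sum.

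The main obstacle is the bookkeeping in the factoring step: one must be sure that the map sending a multi-index to the pair (support set, repetition pattern) is a bijection onto the tuples with exactly $j$ distinct indices, so that no term is double counted or omitted, and that re-expressing the pattern sum for a fixed support through the abstract labels $\{1,\ldots,j\}$ matches the definition of $\phi_{(j)}$ exactly, normalizing factor included. Once the identity ``number of surjections $=j!S_k^{(j)}$'' is in hand and the canonical increasing labeling is used to pass between a concrete support and the abstract index set, the algebra collapses at once. Notably, the symmetry of $\psi$ is not even required for the core identity, only the harmless reindexing of surjections; symmetry of $\psi$ is assumed simply because it is the standard setting for $V$- and $U$-statistics.
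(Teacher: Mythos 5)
Your proposal is correct. Note that the paper itself gives no proof of this lemma---it is quoted directly from Lee (2019)---so there is nothing internal to compare against; your classification of the $n^k$ multi-indices by the number $j$ of distinct values, the factoring of each class into a support set (counted by $\binom{n}{j}$) and a surjective repetition pattern (counted by $j!S_k^{(j)}$), and the cancellation of that count against the normalizing constant in $\phi_{(j)}$ is precisely the standard argument behind the cited result. Your two side observations are also sound: $\phi_{(j)}$ is symmetric because precomposing with a permutation of $\{1,\ldots,j\}$ merely permutes the set of surjective patterns, and the core identity indeed does not require symmetry of $\psi$ (symmetry is only what makes the resulting $U_n^{(j)}$ the usual symmetric-kernel $U$-statistic). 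The one step worth writing out carefully in a full write-up is the bijection you flag yourself: that a tuple with exactly $j$ distinct values corresponds uniquely to the pair (support, pattern), and that the increasing enumeration of the support transports the pattern sum exactly onto the defining sum $\sum_{(j)}^{*}$ for $\phi_{(j)}$; once that is stated, the identity follows by substitution.
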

We have 
for $k\geq 1$ $$S_{k}^{(1)}=S_{k}^{(k)}=1$$
$$S_{k}^{(2)}=2^{(k-1)}-1$$
$$S_{k}^{(k-1)}=\frac{k(k-1)}{2}$$

The following lemma gives the relation between $\tilde{\kappa}$, $\hat\kappa$,  and $\kappa^{*}$.

\begin{lemma}\label{kappacurlstar}
	\begin{eqnarray}
		\tilde{\kappa}&=&{n \choose 2}^{-1}\sum_{1\leq i <j \leq n}\tilde{h}_{\hat{F}_1}(x_i,x_j)\tilde{h}_{\hat{F}_2}(y_i,y_j)\nonumber \\
		&=&\kappa^{*}+\frac{1}{4}\big[\frac{-2n}{(n-1)^2}U_{12n}+\frac{2}{(n-1)^2}U_{3n}+\frac{2}{n-1}U_{1n}U_{2n}\big].\label{kappakappa*}\\
		\hat{\kappa}&=&\frac{1}{n^2}\sum_{i,j=1}^{n}{h}_{\hat{F}_1}(x_i,x_j){h}_{\hat{F}_2}(y_i,y_j)\nonumber \\
		&=&\kappa^{*}+\frac{1}{4}\big[\frac{(2-3n)}{n^2}U_{12n}-2\big(\frac{2-3n}{n^2}\big)U_{3n}+\big(\frac{1-2n}{n^2}\big)U_{1n}U_{2n}\big]
	\end{eqnarray}
\end{lemma}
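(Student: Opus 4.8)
The plan is to prove both identities by fully expanding the estimated kernels into genuine sample sums and then collecting the resulting multi-index sums into the four $U$-statistics $U_{1n},U_{2n},U_{12n},U_{3n}$ and the product $U_{1n}U_{2n}$. The enabling observation is that the centering pieces of the kernels are themselves $U$-statistics: since $|x_i-x_i|=0$, we have $\frac{n}{n-1}A_{1(x_i)}=\frac{1}{n-1}\sum_{k\neq i}|x_i-x_k|$ and, crucially, $\frac{n}{n-1}B_1=\binom{n}{2}^{-1}\sum_{k<l}|x_k-x_l|=U_{1n}$, with the analogous identities for the $y$-coordinate and for the unscaled $A_{1(x_i)},B_1$ used in $\hat\kappa$. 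Writing $c_i:=\frac{1}{n-1}\sum_{k\neq i}|x_i-x_k|$ and $d_i:=\frac{1}{n-1}\sum_{k\neq i}|y_i-y_k|$, the kernels become $\tilde h_{\hat F_1}(x_i,x_j)=-\frac12\big(|x_i-x_j|-c_i-c_j+U_{1n}\big)$ and $\tilde h_{\hat F_2}(y_i,y_j)=-\frac12\big(|y_i-y_j|-d_i-d_j+U_{2n}\big)$.

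First I would treat $\tilde\kappa$. Abbreviating $a_{ij}:=|x_i-x_j|$ and $b_{ij}:=|y_i-y_j|$, the product $\tilde h_{\hat F_1}(x_i,x_j)\tilde h_{\hat F_2}(y_i,y_j)$ expands into sixteen terms sharing the factor $\frac14$. Substituting into $\tilde\kappa=\binom{n}{2}^{-1}\sum_{i<j}(\cdots)$ and interchanging the order of summation, I would evaluate each resulting normalized multi-index sum by classifying its indices according to their coincidence pattern, using three reduction rules. A two-index sum gives $\binom{n}{2}^{-1}\sum_{i<j}a_{ij}b_{ij}=U_{12n}$. A shared-index three-index sum such as $\binom{n}{2}^{-1}\sum_{i<j}a_{ij}d_i$, upon separating the inner index equal to $j$ from the inner index distinct from both, splits into a $U_{12n}$-piece and, after matching the kernel pattern $|x_p-x_q|\,|y_p-y_r|$ defining $U_{3n}$, a $U_{3n}$-piece. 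A disjoint four-index sum is handled through the identity $U_{1n}U_{2n}=\binom{n}{2}^{-2}\sum_{i<j}\sum_{k<l}a_{ij}b_{kl}$, which itself decomposes by the overlap of $\{i,j\}$ and $\{k,l\}$ into $U_{12n}$-, $U_{3n}$- and pure-product contributions. Collecting everything, the leading part assembles exactly into $\kappa^*=\frac14\big(U_{12n}+U_{1n}U_{2n}-2U_{3n}\big)$, while the residuals, carrying the binomial-coefficient-ratio factors, produce the stated correction $\frac14\big[\frac{-2n}{(n-1)^2}U_{12n}+\frac{2}{(n-1)^2}U_{3n}+\frac{2}{n-1}U_{1n}U_{2n}\big]$.

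For $\hat\kappa$ I would run the identical expansion, noting two differences: the $\hat\kappa$-kernels use the unscaled $A_{1(x_i)}=\frac1n\sum_k|x_i-x_k|$, which retains the $k=i$ summand, and the outer average $\frac{1}{n^2}\sum_{i,j}$ includes the diagonal $i=j$ (where $a_{ii}=b_{ii}=0$). These retained diagonals are exactly what make the $\hat\kappa$-correction differ from that of $\tilde\kappa$. Because every sum arising here is of $V$-statistic type, with full index ranges and no deletion of coincidences, Lemma \ref{VtoUtherm} provides the systematic route: each homogeneous $V$-statistic piece is rewritten via the Stirling-number formula as a combination of $U$-statistics, all of which collapse onto $U_{1n},U_{2n},U_{12n},U_{3n}$, and collecting the explicit $n$-dependent weights gives the second identity with leading term $\kappa^*$.

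The main obstacle is the combinatorial bookkeeping of the cross terms: correctly partitioning each multi-index sum according to the coincidence pattern of its indices, mapping each piece to the right $U$-statistic, and tracking the exact normalization constants (ratios of $\binom{n}{2}$, $\binom{n}{3}$ and powers of $n$). The delicate point is that $U_{1n}U_{2n}$ is a product of two $U$-statistics rather than a single one, so the disjoint four-index sums must be reconciled with $U_{1n}U_{2n}$ via the decomposition above, while the shared-index three-index sums distribute between $U_{12n}$ (coincident inner index) and $U_{3n}$ (distinct inner index). The symmetrization already built into the kernel $h$ of $U_{3n}$ is what lets these distinct-index pieces be identified cleanly, and this is what fixes the precise finite-sample coefficients; everything else is routine algebra.
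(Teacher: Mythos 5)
Your proposal is correct and follows essentially the same route as the paper: fully expanding the estimated kernels into multi-index sums, classifying terms by the coincidence pattern of their indices, and using the $V$-to-$U$ conversion of Lemma \ref{VtoUtherm} to land on $U_{1n},U_{2n},U_{12n},U_{3n}$ and the product $U_{1n}U_{2n}$. The only difference is organizational — for $\tilde{\kappa}$ the paper first writes the sum as the full double sum minus the diagonal and collects everything into the four $V$-statistics before converting, whereas you split coincidence patterns directly over $i<j$ — but this is the same combinatorial accounting.
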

\begin{proof}
	We shall need the following four $V$ statistics: 
	\begin{eqnarray*} 
		V_{1n} &=&  \frac{1}{n^2}\sum_{i,j=1}^{n} |X_i-X_j|, \hspace{1.5cm}
		V_{2n} =  \frac{1}{n^2}\sum_{i,j=1}^{n} |Y_i-Y_j|, \\
		V_{12n}&=& \frac{1}{n^2}\sum_{i,j=1}^{n} |X_i-X_j|\ |Y_i-Y_j|, \ \
	V_{3n}= \frac{1}{n^3}\sum_{i,j,k=1}^{n}	h\big( (X_i, Y_i), (X_j, Y_j), (X_k, Y_k)\big),
\end{eqnarray*}
where the function $h$ is as in (\ref{eq:symmetrich}).
By Lemma \ref{VtoUtherm}, using the special nature of the kernels involved, we can write the above $V$-statistics 
as
\begin{eqnarray*} 
	n^2V_{1n}=2{n \choose 2}U_{1n},\;\;
	n^2V_{2n}=2{n \choose 2}U_{2n},\;\;
	n^2V_{12n}=2{n \choose 2}U_{12n},\;\;
	n^3V_{3n}=6{n \choose 3}U_{3n}+2{n \choose 2}U_{12n}.
\end{eqnarray*}
We have,
\begin{eqnarray}
	\tilde{\kappa}&=&{n \choose 2}^{-1}\sum_{1\leq i , j\leq n}\tilde{h}_{\hat{F}_1}(x_i,x_j)\tilde{h}_{\hat{F}_2}(y_i,y_j)\nonumber\\
	&=&{n \choose 2}^{-1}\frac{1}{2}\big[\sum_{i, j=1}^{n}\tilde{h}_{\hat{F}_1}(x_i,x_j)\tilde{h}_{\hat{F}_2}(y_i,y_j)-\sum_{i=1}^{n}\tilde{h}_{\hat{F}_1}(x_i,x_i)\tilde{h}_{\hat{F}_2}(y_i,y_i)\big]\nonumber
\end{eqnarray}
Consider 
\begin{eqnarray*}
	&&\sum_{i, j=1}^{n}\tilde{h}_{\hat{\mathbb{F}}_1}(x_i,x_j)\tilde{h}_{\hat{\mathbb{F}}_2}(y_i,y_j)\\ \nonumber&=&\frac{1}{4}\sum_{i, j=1}^{n}\bigg(|x_i-x_j|-\frac{n}{n-1}(\frac{1}{n}\sum_{k=1}^{n}|x_i-x_k|+\frac{1}{n}\sum_{k=1}^{n}|x_k-x_j|-\frac{1}{n^2}\sum_{k,l=1}^{n}|x_k-x_l|)\bigg)\\ && \times  \bigg(|y_i-y_j|-\frac{n}{n-1}(\frac{1}{n}\sum_{k=1}^{n}|y_i-y_k|+\frac{1}{n}\sum_{k=1}^{n}|y_k-y_j|-\frac{1}{n^2}\sum_{k,l=1}^{n}|y_k-y_l|)\bigg)
\end{eqnarray*}
\begin{eqnarray*}
	&=&\frac{1}{4}\Bigg(\sum_{i, j=1}^{n}|x_i-x_j||y_i-y_j|-\frac{1}{n-1}\sum_{i,j,k=1}^{n}|x_i-x_j||y_i-y_k|-\frac{1}{n-1}\sum_{i,j,k=1}^{n}|x_i-x_j||y_j-y_k|\\ && \
	+\frac{1}{n(n-1)}\sum_{i,j,k,l=1}^{n}|x_i-x_j||y_l-y_k|-\frac{1}{n-1}\sum_{i,j,k=1}^{n}|x_i-x_k||y_i-y_j|\\ && \ -\frac{1}{n-1}\sum_{i,j,k=1}^{n}|x_j-x_k||y_i-y_j| +\frac{1}{n(n-1)}\sum_{i,j,k,l=1}^{n}|x_i-x_j||y_l-y_k| \\ && \ + \big(\frac{n}{n-1}\big)^2\bigg[\frac{n}{n^2}\sum_{i,k,l=1}^{n}|x_i-x_k||y_i-y_l|+\frac{1}{n^2}\sum_{i,j,k,l=1}^{n}|x_i-x_k||y_j-y_l|\\ && \   -\frac{1}{n^2}\sum_{i,j,k,l=1}^{n}|x_i-x_k||y_j-y_l|+\frac{1}{n^2}\sum_{i,j,k,l=1}^{n}|x_i-x_k||y_j-y_l|-\frac{1}{n^2}\sum_{i,j,k,l=1}^{n}|x_i-x_k||y_j-y_l|\\ && \ + \frac{n}{n^2}\sum_{j,k,l=1}^{n}|x_j-x_k||y_j-y_l|-\frac{1}{n^2}\sum_{i,j,k,l=1}^{n}|x_j-x_k||y_i-y_l|-\frac{1}{n^2}\sum_{i,j,k,l=1}^{n}|x_k-x_l||y_j-y_i| \\ && \ +\frac{1}{n^2}\sum_{i,j,k,l=1}^{n}|x_l-x_k||y_j-y_i|\bigg]\Bigg)\\ \nonumber
	&=&\frac{1}{4}\Bigg(\sum_{i, j=1}^{n}|x_i-x_j||y_i-y_j|-\frac{4}{n-1}\sum_{i,j,k=1}^{n}|x_i-x_j||y_i-y_k|+\frac{2}{n(n-1)}\sum_{i,j,k,l=1}^{n}|x_i-x_j||y_k-y_l|\\ && \ + \big(\frac{n}{n-1}\big)^2\frac{2n}{n^2}\sum_{i,j,k=1}^{n}|x_i-x_j||y_i-y_k|-\big(\frac{n}{n-1}\big)^2\frac{1}{n^2}\sum_{i,j,k,l=1}^{n}|x_i-x_j||y_k-y_l|\Bigg)\\ \nonumber\\
	&=&\frac{1}{4}\bigg[\sum_{i,j=1}^{n}|x_i-x_j||y_i-y_j|-\frac{2(n-2)}{(n-1)^2}\sum_{i,j,k=1}^{n}|x_i-x_j||y_i-y_k|+\frac{n-2}{n(n-1)^2}\sum_{i,j,k,l=1}^{n}|x_i-x_j||y_k-y_l|\bigg]
\end{eqnarray*}
Now,
\begin{eqnarray*}
	&&\sum_{i=1}^{n}\tilde{h}_{\hat{\mathbb{F}}_1}(x_i,x_i)\tilde{h}_{\hat{\mathbb{F}}_2}(y_i,y_i)\\&=&\frac{1}{4}\sum_{i=1}^{n}\bigg(-\frac{n}{n-1}(\frac{1}{n}\sum_{k=1}^{n}|x_i-x_k|+\frac{1}{n}\sum_{k=1}^{n}|x_k-x_i|-\frac{1}{n^2}\sum_{k,l=1}^{n}|x_k-x_l|)\bigg)\\ && \times  \bigg(-\frac{n}{n-1}(\frac{1}{n}\sum_{k=1}^{n}|y_i-y_k|+\frac{1}{n}\sum_{k=1}^{n}|y_k-y_i|-\frac{1}{n^2}\sum_{k,l=1}^{n}|y_k-y_l|)\bigg)\\&=&\frac{1}{4}\sum_{i=1}^{n}\bigg(-\frac{n}{n-1}(\frac{2}{n}\sum_{k=1}^{n}|x_k-x_i|-\frac{1}{n^2}\sum_{k,l=1}^{n}|x_k-x_l|)\bigg)\\ && \times  \bigg(-\frac{n}{n-1}(\frac{2}{n}\sum_{k=1}^{n}|y_k-y_i|-\frac{1}{n^2}\sum_{k,l=1}^{n}|y_k-y_l|)\bigg)\end{eqnarray*}
\begin{eqnarray*}
	&=& \frac{1}{4}\bigg(\frac{n}{n-1}\bigg)^2\bigg[\frac{4}{n^2}\sum_{i,k,l=1}^{n}|x_k-x_i||y_i-y_l|-\frac{2}{n^3}\sum_{i,j,k,l=1}^{n}|x_k-x_i||y_j-y_l|
	\\ &&  -\frac{2}{n^3}\sum_{i,j,k,l=1}^{n}|x_k-x_l||y_i-y_j|+\frac{1}{n^3}\sum_{i,j,k,l=1}^{n}|x_k-x_l||y_j-y_i|\bigg]
	\\&=&\frac{1}{4}\bigg[\frac{4}{(n-1)^2}\sum_{i,j,k=1}^{n}|x_i-x_j||y_i-y_k|-\frac{3}{n(n-1)^2}\sum_{i,j,k,l=1}^{n}|x_k-x_l||y_j-y_i|\bigg]
\end{eqnarray*}
\begin{eqnarray*}
	\tilde{\kappa}&=&
	{n \choose 2}^{-1}\frac{1}{2}\bigg[\sum_{i, j=1}^{n}\tilde{h}_{\hat{\mathbb{F}}_1}(x_i,x_j)\tilde{h}_{\hat{\mathbb{F}}_2}(y_i,y_j)-\sum_{i=1}^{n}\tilde{h}_{\hat{\mathbb{F}}_1}(x_i,x_i)\tilde{h}_{\hat{\mathbb{F}}_2}(y_i,y_i)\bigg]\\
	&=&\frac{1}{4}{n \choose 2}^{-1}\bigg[\frac{1}{2}\bigg[\sum_{i,j=1}^{n}|x_i-x_j||y_i-y_j|-2.\frac{n}{(n-1)^2}\sum_{i,j,k=1}^{n}|x_i-x_j||y_i-y_k|\\&& \ +\frac{n+1}{n(n-1)^2}\sum_{i,j,k,l=1}^{n}|x_k-x_l||y_j-y_i|\bigg]\bigg]\\
	&=&\frac{1}{4}{n \choose 2}^{-1}\bigg[\frac{1}{2}\bigg[\sum_{i,j=1}^{n}|x_i-x_j||y_i-y_j|-2.\frac{n}{(n-1)^2}\sum_{i,j,k=1}^{n}|x_i-x_j||y_i-y_k|\\&& \ +\frac{n+1}{n(n-1)^2}\sum_{i,j=1}^{n}|x_i-x_j|\sum_{k,l=1}^{n}|y_j-y_i|\bigg]\bigg]\\
	&=&\frac{1}{4}{n \choose 2}^{-1}\bigg[\frac{1}{2}\bigg[n^2V_{12n}-2.\frac{n}{(n-1)^2}n^3V_{3n} +\frac{n+1}{n(n-1)^2}(n^2V_{1n})(n^2V_{2n})\bigg]\bigg],
\end{eqnarray*}

Expressing the above $V$-statistics in terms of corresponding $U$-statistics we obtain 
\begin{eqnarray*}
	\tilde{\kappa}
	&=&\frac{1}{4}{n \choose 2}^{-1}\bigg[\frac{1}{2}\bigg[2{n \choose 2}U_{12n}-2.\frac{n}{(n-1)^2}\big[6{n \choose 3}U_{3n}+2{n \choose 2}U_{12n}\big] \\ && \ \ \ \  \ \ \ \ \ \ \ \ \ \ \ \ \ \ \ \ \ \ \ \ \ \ \ \ \ \ \ \ \ \ \ \ \ \ \ +\frac{n+1}{n(n-1)^2}(2{n \choose 2}U_{1n})(2{n \choose 2}U_{2n})\bigg]\bigg]\\
	&=&\frac{1}{4}\bigg[\bigg(1-\frac{2n}{(n-1)^2}\bigg)U_{12n}-\frac{2n(n-2)}{(n-1)^2}U_{3n}+\frac{n+1}{n-1}U_{1n}U_{2n}\bigg]\\
	&=&\frac{1}{4}\bigg[\bigg(1-\frac{2n}{(n-1)^2}\bigg)U_{12n}-2\bigg(\frac{n(n-2)}{(n-1)^2}+1-1\bigg)U_{3n}+\bigg(\frac{n+1}{n-1}+1-1\bigg)U_{1n}U_{2n}\bigg]\\
	 &=&\frac{1}{4}\big[U_{12n}-2U_{3n}+U_{1n}U_{2n}\big]+\frac{1}{4}\bigg[\frac{-2n}{(n-1)^2}U_{12n}-2\bigg(\frac{n(n-2)}{(n-1)^2}-1\bigg)U_{3n}+\bigg(\frac{n+1}{n-1}-1\bigg)U_{1n}U_{2n}\bigg]\\
	&=&\frac{1}{4}\big[U_{12n}-2U_{3n}+U_{1n}U_{2n}\big]+\frac{1}{4}\bigg[\frac{-2n}{(n-1)^2}U_{12n}+\bigg(\frac{2}{(n-1)^2}\bigg)U_{3n}+\bigg(\frac{2}{n-1}\bigg)U_{1n}U_{2n}\bigg]
\end{eqnarray*}
This establishes the relation between $\tilde\kappa$ and $\kappa^*$.\\
The relation between $\hat\kappa$ and $\kappa^*$ is obtained as follows, 
\begin{eqnarray*}
	\hat{\kappa}&=&\frac{1}{n^2}\sum_{i,j=1}^{n}{h}_{\hat{F}_1}(x_i,x_j){h}_{\hat{F}_2}(y_i,y_j)\\
	&=&\frac{1}{4}\frac{1}{n^2} \sum_{i,j=1}^{n}\left(|x_i-x_j|-\frac{1}{n}\sum_{k=1}^{n}|x_i-x_k|-\frac{1}{n}\sum_{k=1}^{n}|x_k-x_j|+\frac{1}{n^2}\sum_{k,l=1}^{n}|x_k-x_l|\right)\\ && \ \times
	\left(|y_i-y_j|-\frac{1}{n}\sum_{k=1}^{n}|y_i-y_k|-\frac{1}{n}\sum_{k=1}^{n}|y_k-y_j|+\frac{1}{n^2}\sum_{k,l=1}^{n}|y_k-y_l|\right)\\
	&=&\frac{1}{4}\Bigg(\frac{1}{n^2}\sum_{i, j=1}^{n}|x_i-x_j||y_i-y_j|-\frac{1}{n^3}\sum_{i,j,k=1}^{n}|x_i-x_j||y_i-y_k|-\frac{1}{n^3}\sum_{i,j,k=1}^{n}|x_i-x_j||y_j-y_k|\\ && \
	+\frac{1}{n^4}\sum_{i,j,k,l=1}^{n}|x_i-x_j||y_l-y_k|-\frac{1}{n^3}\sum_{i,j,k=1}^{n}|x_i-x_k||y_i-y_j|-\frac{1}{n^3}\sum_{i,j,k=1}^{n}|x_j-x_k||y_i-y_j|\\ && \ +\frac{1}{n^4}\sum_{i,j,k,l=1}^{n}|x_i-x_j||y_l-y_k|+ \frac{1}{n^3}\sum_{i,k,l=1}^{n}|x_i-x_k||y_i-y_l|+\frac{1}{n^4}\sum_{i,j,k,l=1}^{n}|x_i-x_k||y_j-y_l|\\ && \   -\frac{1}{n^4}\sum_{i,j,k,l=1}^{n}|x_i-x_k||y_j-y_l|+\frac{1}{n^4}\sum_{i,j,k,l=1}^{n}|x_i-x_k||y_j-y_l|-\frac{1}{n^4}\sum_{i,j,k,l=1}^{n}|x_i-x_k||y_j-y_l|\\ && \ + \frac{1}{n^3}\sum_{j,k,l=1}^{n}|x_j-x_k||y_j-y_l|-\frac{1}{n^4}\sum_{i,j,k,l=1}^{n}|x_j-x_k||y_i-y_l|-\frac{1}{n^4}\sum_{i,j,k,l=1}^{n}|x_k-x_l||y_j-y_i| \\ && \ +\frac{1}{n^4}\sum_{i,j,k,l=1}^{n}|x_l-x_k||y_j-y_i|\Bigg)\\
	&=&\frac{1}{4}\Bigg(\frac{1}{n^2}\sum_{i, j=1}^{n}|x_i-x_j||y_i-y_j|-\frac{2}{n^3}\sum_{i,j,k=1}^{n}|x_i-x_j||y_i-y_k|+\frac{1}{n^4}\sum_{i,j,k,l=1}^{n}|x_i-x_j||y_k-y_l|\Bigg)\\
	&=&\frac{1}{4}\big[V_{12n}-2V_{3n}+V_{1n}V_{2n}\big]
\end{eqnarray*}
It is interesting to observe that $\hat\kappa$ and $\kappa^{*}$ have the same form, but the former involves $V$-statistics while the latter involves $U$-statistics. 
Expressing the above $V$-statistics in terms of corresponding $U$-statistics we obtain 
\begin{eqnarray*}
	\hat{\kappa}&=&\frac{1}{4}\big[V_{12n}-2V_{3n}+V_{1n}V_{2n}\big]\\
	&=&\frac{1}{4}\bigg[\frac{(n-1)}{n}U_{12n}-2\frac{(n-1)(n-2)}{n^2}U_{3n}-\frac{2(n-1)}{n^2}U_{12n}
	+\big(\frac{n-1}{n}\big)^2U_{1n}U_{2n}\bigg]\\
	&=&\frac{1}{4}\bigg[\frac{(n-1)(n-2)}{n^2}U_{12n}-2\frac{(n-1)(n-2)}{n^2}U_{3n}+\bigg(\frac{n-1}{n}\bigg)^2U_{1n}U_{2n}\bigg]\\
	&=&\frac{1}{4}\bigg[\bigg(\frac{(n-1)(n-2)}{n^2}+1-1\bigg)U_{12n}-2\bigg(\frac{(n-1)(n-2)}{n^2}+1-1\bigg)U_{3n}+\bigg(\bigg(\frac{n-1}{n}\bigg)^2+1-1\bigg)U_{1n}U_{2n}\bigg]\\
	&=&\kappa^{*}+\frac{1}{4}\big[\frac{(2-3n)}{n^2}U_{12n}-2\big(\frac{2-3n}{n^2}\big)U_{3n}+\big(\frac{1-2n}{n^2}\big)U_{1n}U_{2n}\big]
\end{eqnarray*}
\end{proof}
%

\section{Asymptotic behavior of $\hat \kappa$, $\tilde \kappa$ and $\kappa^{*}$}\label{sec:asymp}

As mentioned earlier, for the dependent case, the asymptotic distributions of $\hat{\kappa}$, and $\tilde\kappa$ are not available in literature. This is presented below. Also for the independence case we will provide an alternate proof for the asymptotic results. Incidentally for the independence case (i.e. the degenerate case) we observed some errors in the calculation of the eigensystem of the kernel $h_F$, in \cite{ bergsma2006new}. For a rectified version please see Appendix \ref{sec:ev} 

\subsection{Dependent case: asymptotic normality of $\kappa^*$, $\hat{\kappa}$, and $\tilde\kappa$}\label{subsec:asynormal}  

The asymptotic normality of the estimates now follow easily.

\begin{theorem}\label{theo:kappanormal} Suppose $\{(X_i, Y_i)\}$ are i.i.d.~with $\BE_{F_{12}}[X_1^2Y_1^2] < \infty$. Then  as $n \to \infty$, 
$n^{1/2}\big(\kappa^{*}-\kappa\big)$, $n^{1/2}\big(\tilde{\kappa}-\kappa\big)$, and $n^{1/2}\big(\hat{\kappa}-\kappa\big)$ are asymptotically normal with mean $0$ and variance $\delta_1$ where,
\begin{eqnarray}
	\delta_1&=&\frac{1}{4}\big[\BVar_{F_{12}}\big(g_{F_{12}}(X_1,Y_1)+\mu_{1}g_{F_{2}}(Y_1)+\mu_{2}g_{F_{1}}(X_1) - \BE_{F_{12}}[|X_2-X_1|\ |Y_2-Y_3| \  |X_1]\nonumber\\
	&& \ -\BE_{F_{12}}[|X_2-X_3|\ |Y_2-Y_1|\ |Y_1]-g_{F_{1}}(X_1)g_{F_{2}}(Y_1)\big)\big]. \label{asymvar}
\end{eqnarray}
\end{theorem}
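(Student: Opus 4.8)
The plan is to exploit the representation $\kappa^{*}=\tfrac14\big(U_{12n}+U_{1n}U_{2n}-2U_{3n}\big)$, treating $\kappa^{*}$ as a fixed smooth function of the vector $\mathbf U_n=(U_{1n},U_{2n},U_{12n},U_{3n})$ of four $U$-statistics, whose componentwise means are $(\mu_1,\mu_2,\mu_{12},\mu_3)$. Since $\kappa=\tfrac14(\mu_{12}+\mu_1\mu_2-2\mu_3)$ by Theorem \ref{theo:altkappa}, the assumption $\BE_{F_{12}}[X_1^2Y_1^2]<\infty$, together with the finiteness of the marginal variances inherent in the definition of $\kappa$, guarantees that each of the four kernels is square integrable. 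Hence the multivariate $U$-statistics central limit Theorem \ref{UCLT} applies, so that $n^{1/2}(\mathbf U_n-\boldsymbol\mu)$ is asymptotically multivariate normal, the limiting covariance being determined by the first projections of the kernels.

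First I would linearize $\kappa^{*}$ about $\boldsymbol\mu$ through the Hoeffding/H\'ajek projections supplied by Theorem \ref{UCLT}. For the three degree-two statistics the first projections are $g_{F_1}$, $g_{F_2}$ and $g_{F_{12}}$ respectively, so that $n^{1/2}(U_{1n}-\mu_1)=\tfrac{2}{n^{1/2}}\sum_i(g_{F_1}(X_i)-\mu_1)+o_p(1)$, and analogously for $U_{2n}$ and $U_{12n}$. The product term is handled by writing $U_{1n}U_{2n}-\mu_1\mu_2=\mu_2(U_{1n}-\mu_1)+\mu_1(U_{2n}-\mu_2)+(U_{1n}-\mu_1)(U_{2n}-\mu_2)$, where the last summand is $O_p(1/n)$ and thus negligible after multiplication by $n^{1/2}$. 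Collecting the projections then yields
$$n^{1/2}(\kappa^{*}-\kappa)=\frac{1}{n^{1/2}}\sum_{i=1}^{n}\Phi(X_i,Y_i)+o_p(1)$$
for an explicit mean-zero summand $\Phi$, after which the ordinary i.i.d. central limit theorem gives asymptotic normality with variance $\BVar_{F_{12}}(\Phi(X_1,Y_1))$.

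The main computational step, and the part I expect to be the chief obstacle, is the first projection $h_1$ of the symmetric degree-three kernel $h$ of \eqref{eq:symmetrich} underlying $U_{3n}$. Writing $A(x):=\BE_{F_{12}}[\,|X_2-X_1|\,|Y_2-Y_3|\mid X_1=x\,]$ and $B(y):=\BE_{F_{12}}[\,|X_2-X_3|\,|Y_2-Y_1|\mid Y_1=y\,]$, I would fix the first coordinate at $(x,y)$ and integrate out the remaining two independent copies from each of the six symmetrized summands, carefully tracking which arguments share an observation, so that e.g.\ $X_2$ and $Y_3$ are independent while $X_2$ and $Y_2$ are not. The six terms then collapse in pairs to
$$h_1(x,y)=\tfrac13\big[\,g_{F_1}(x)g_{F_2}(y)+A(x)+B(y)\,\big].$$
Substituting this, with the degree factor $3$, into the combined projection produces (up to an additive constant)
$$\Phi(X_1,Y_1)=\tfrac12\big[\,g_{F_{12}}(X_1,Y_1)+\mu_1 g_{F_2}(Y_1)+\mu_2 g_{F_1}(X_1)-g_{F_1}(X_1)g_{F_2}(Y_1)-A(X_1)-B(Y_1)\,\big],$$
whose variance is exactly the $\delta_1$ of \eqref{asymvar}; the bookkeeping needed to match this to the stated form is the delicate part.

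Finally, to transfer the conclusion to $\tilde\kappa$ and $\hat\kappa$ I would invoke the exact finite-sample identities of Lemma \ref{kappacurlstar}. In each identity the terms distinguishing the estimator from $\kappa^{*}$ are $U$-statistics carrying coefficients of order $O(1/n)$, while $U_{12n},U_{3n}$ and $U_{1n}U_{2n}$ converge almost surely to finite constants by the strong law for $U$-statistics. Consequently $n^{1/2}(\tilde\kappa-\kappa^{*})\xrightarrow{P}0$ and $n^{1/2}(\hat\kappa-\kappa^{*})\xrightarrow{P}0$, and Slutsky's theorem endows both $n^{1/2}(\tilde\kappa-\kappa)$ and $n^{1/2}(\hat\kappa-\kappa)$ with the same $N(0,\delta_1)$ limit.
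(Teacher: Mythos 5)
Your proposal is correct and follows essentially the same route as the paper: both reduce $4(\kappa^{*}-\kappa)$ to a linear combination of the four $U$-statistics plus the negligible product $(U_{1n}-\mu_1)(U_{2n}-\mu_2)$, compute the first projection of the degree-three kernel $h$ (your $h_1(x,y)=\tfrac13[g_{F_1}(x)g_{F_2}(y)+A(x)+B(y)]$ matches the paper's), and obtain $\delta_1$ as $\tfrac14$ times the variance of the resulting first-projection summand. Your final paragraph in fact supplies the detail the paper omits when transferring the result to $\tilde\kappa$ and $\hat\kappa$ via Lemma \ref{kappacurlstar}.
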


\begin{remark}
When $\{X_i\}$ is independent of $\{Y_i\}$, $F_{12}=F_1\otimes F_2$, $g_{F_{12}}(X_1, Y_1)=g_{F_{1}}(X_1) g_{F_{2}}(Y_1)$, and the conditional expectations are $\mu_{2}g_{F_{1}}(X_1)$ and $\mu_{1}g_{F_{2}}(Y_1)$. Hence $\delta_1=0$. In the next section we investigate the non-degenerate asymptotic distribution of $n\kappa^{*}$ under independence. 
\end{remark}

\begin{proof}[Proof of Theorem \ref{theo:kappanormal}] 
First note that 
\begin{eqnarray} 
	4\big(\kappa^{*}-\kappa\big)&=& \big[U_{12n}-\mu_{12}
	+\mu_1(U_{2n}-\mu_2)
	+ \mu_2(U_{1n}-\mu_1)
	-2(U_{3n}-\mu_{3})\big]\label{eq:kappau}\\
	&&\ \ +(U_{1n}-\mu_1)(U_{2n}-\mu_2)\nonumber\\
	&=&U_n+(U_{1n}-\mu_1)(U_{2n}-\mu_2)\ \text(say),\label{eq:kappauv}
\end{eqnarray}

By the first projections we have,
\begin{eqnarray*}
	U_{1n}-\mu_1&=&2n^{-1} \sum_{i=1}^n \big[g_{F_{1}}(X_i)-\mu_1\big]+R_{1n}= 2\bar g_1+R_{1n} \ \text{say}, \\
	U_{2n}-\mu_2&=&2n^{-1} \sum_{i=1}^n \big[g_{F_{2}}(Y_i)-\mu_2\big]+R_{2n}=2\bar g_2+R_{2n}\ \text{say},  \\
	U_{12n}-\mu_{12}
	&=&2n^{-1} \sum_{i=1}^n \big[g_{F_{12}}(X_i, Y_i)-\mu_{12}\big]+R_{12n}=2\bar g_{12}+R_{12n} \ \text{say},\\
	U_{3n}-\mu_3&=&3 n^{-1}\sum_{i=1}^{n}\big(\dfrac{1}{6}\big[2g_{F_{1}}(x_i)g_{F_{2}}(y_i)
	+2\BE_{F_{12}}[|X_2-X_1|\ |Y_2-Y_3|\ |X_1=x_i]\\&&
	\hspace{2cm}+2\BE_{F_{12}}[|X_2-X_3|\ |Y_2-Y_1| \  |Y_1=y_i]\big]- \mu_{3}\big)+R_{3n}, \text{say},
\end{eqnarray*}
where $n^{1/2}R_{12n}\xrightarrow{P} 0$ and $n^{1/2} R_{in}\xrightarrow{P} 0$, for $i=1, 2, 3$. Note that $U_n$ is a linear combination of 
$U_{1n}$, $U_{2n}$, $U_{12n}$ and $U_{3n}$ and is a $U$-statistics of order $3$. We write the $H$-decomposition of $U_n$ with overall centered first projection of $U_n$ given by $H_1(x_1,y_1)$ as follows:

\begin{eqnarray}
	U_n&=&2n^{-1} \sum_{i=1}^n \big[g_{F_{12}}(X_i, Y_i)-\mu_{12}\big]+\mu_{1}(2n^{-1} \sum_{i=1}^n \big[g_{F_{2}}(Y_i)-\mu_2\big])+\mu_{2}(2n^{-1} \sum_{i=1}^n \big[g_{F_{1}}(X_i)-\mu_1\big])\nonumber\\&& \ -2(3 n^{-1}\sum_{i=1}^n \big(\dfrac{1}{6}\big[2g_{F_{1}}(x_i)g_{F_{2}}(y_i)
	+2\BE_{F_{12}}[|X_2-X_1|\ |Y_2-Y_3| \  |X_1=x_i]\\&&
	+2\BE_{F_{12}}[|X_2-X_3|\ |Y_2-Y_1| \  |Y_1=y_i]\big]- \mu_{3}\big)+R_n,\nonumber\\
	&=&2n^{-1} \sum_{i=1}^n \big[g_{F_{12}}(X_i, Y_i)-\mu_{12}+\mu_{1}(g_{F_{2}}(Y_i)-\mu_2)+\mu_{2}(g_{F_{1}}(X_i)-\mu_1) -(g_{F_{1}}(X_i)g_{F_{2}}(Y_i)) 
	\nonumber \\
	&& \ - \BE_{F_{12}}[|X_2-X_1|\ |Y_2-Y_3| \  |X_1=X_i]-\BE_{F_{12}}[|X_2-X_3|\ |Y_2-Y_1| \  |Y_1=Y_i]+3\mu_{3}\big]+R_n \nonumber\\
	&=&2n^{-1} \sum_{i=1}^nH_1(X_i,Y_i)+R_n, \label{eq:unvn}
\end{eqnarray}
where $R_n=o_P(n^{-1/2})$ and 

\begin{eqnarray*}
		H_1(X_1,Y_1)&=&\big[g_{F_{12}}(X_1, Y_1)-\mu_{12}+\mu_{1}(g_{F_{2}}(Y_1)-\mu_2)+\mu_{2}(g_{F_{1}}(X_1)-\mu_1)-(g_{F_{1}}(X_1)g_{F_{2}}(Y_1))\\
		&& \ - \BE_{F_{12}}[|X_2-X_1|\ |Y_2-Y_3| \  |X_1]-\BE_{F_{12}}[|X_2-X_3|\ |Y_2-Y_1| \  |Y_1]+3\mu_{3}\big].
	\end{eqnarray*}

Now note that 
	\begin{eqnarray}\sqrt{n}(U_{1n}-\mu_1)(U_{2n}-\mu_2)&=&\sqrt{n}(2\bar g_1+ R_{1n}) (2\bar g_2+R_{2n})\nonumber\\
		&=&4n^{1/2}\bar g_1\bar g_2+\epsilon_n.\label{eq:u1u2prod}
	\end{eqnarray}

By the central limit theorem, $n^{1/2}\big(\bar g_{12}, \overline{g_1g_2}, \bar g_1, \bar g_2\big)$ is asymptotically normal with mean $0$. 
Therefore, \ $n^{1/2} \bar g_1 \bar g_2 \xrightarrow{P} 0$ and  $\sqrt{n}\epsilon_n\xrightarrow P 0$.
Hence using (\ref{eq:unvn}) and (\ref{eq:kappauv}), we have 

	\begin{equation}\label{kappahathdecom}
		\sqrt{n}\big(\kappa^{*}-\kappa\big)=\frac{1}{2}n^{-1/2} \sum_{i=1}^nH_1(X_i,Y_i)+\sqrt{n}R_n+n^{1/2}\bar g_1\bar g_2+\sqrt{n}\epsilon_n.
	\end{equation}

As a consequence, 
	\begin{equation}\label{eq:kappanorm} 
		n^{1/2}\big(\kappa^{*}-\kappa\big)\xrightarrow{D}N(0,\delta_1),
	\end{equation}
where 
	$\delta_1=\frac{1}{4}\BVar(H_1(X_1,Y_1))$ is as given in (\ref{asymvar}).
	
The claim for $\tilde\kappa$ and $\hat{\kappa}$ now follows easily from 
	Lemma \ref{kappacurlstar}.
	We omit the details.  
\end{proof}


\subsection{Independent (degenerate) case} 

We now present the asymptotic distribution of $\kappa^*$ in the independent case. 

\begin{theorem}\label{kappastarind}
Suppose the observations $\{X_i\}$ and $\{Y_i\}$ are all independent with distributions $F_1$ and $F_2$ respectively. Suppose $\BE_{F_{1}}[ h^2_{F_1}(X_1, X_2)]+ 
	\BE_{F_{2}}[ h^2_{F_2}(Y_1, Y_2)]
	< \infty$.
	Then 
	$n\kappa^{*}
	\xrightarrow D \sum_{k,l=0}^{\infty}\lambda_k\eta_l(Z_{kl}^2-1)$, 
	where %
	$\{\lambda_i\}$ and $\{\eta_i\}$ are the eigenvalues of $h_{F_1}$ and $h_{F_2}$ and 
	\{$Z_{ij}$\} are i.i.d.~standard normal variables.
\end{theorem}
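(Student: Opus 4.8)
The plan is to avoid re-deriving the degenerate limit from first principles and instead to combine the exact finite-sample identity of Lemma~\ref{kappacurlstar} with Bergsma's degenerate limit, Theorem~\ref{kappatildadegen}. Lemma~\ref{kappacurlstar} gives $\kappa^{*}=\tilde\kappa-\frac{1}{4}\big[\frac{-2n}{(n-1)^2}U_{12n}+\frac{2}{(n-1)^2}U_{3n}+\frac{2}{n-1}U_{1n}U_{2n}\big]$, so the whole problem reduces to showing that, after multiplying through by $n$, the correction bracket becomes asymptotically negligible in probability under independence. Once that is done, $n\kappa^{*}$ and $n\tilde\kappa$ share the same weak limit and the conclusion is immediate from (\ref{eqn:tildekapplimit}).

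First I would multiply the identity by $n$ and treat the three correction terms one at a time:
\begin{equation*}
n\kappa^{*}=n\tilde\kappa+\frac{n^2}{2(n-1)^2}U_{12n}-\frac{n}{2(n-1)^2}U_{3n}-\frac{n}{2(n-1)}U_{1n}U_{2n}.
\end{equation*}
By the SLLN for $U$-statistics, $U_{12n}\to\mu_{12}$, $U_{3n}\to\mu_{3}$ and $U_{1n}U_{2n}\to\mu_1\mu_2$ almost surely, all limits finite under the stated second-moment hypothesis. The three scalar coefficients tend to $\frac{1}{2}$, $0$ and $\frac{1}{2}$ respectively; note in particular that the coefficient multiplying $U_{3n}$ decays like $n^{-1}$ and kills that term. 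Hence the correction converges almost surely to $\frac{1}{2}(\mu_{12}-\mu_1\mu_2)$. Under independence $F_{12}=F_1\otimes F_2$, so by (\ref{ind}) we have $\mu_{12}=\mu_1\mu_2$ and this constant is exactly $0$.

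Therefore $n\kappa^{*}=n\tilde\kappa+o_P(1)$, and Slutsky's theorem together with Theorem~\ref{kappatildadegen} gives $n\kappa^{*}\xrightarrow{D}\sum_{k,l=0}^{\infty}\lambda_k\eta_l(Z_{kl}^2-1)$. The moment assumption here is identical to that of Theorem~\ref{kappatildadegen}, so nothing extra is required.

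The step needing genuine care — and the main obstacle were one to insist on a self-contained proof not invoking Bergsma's theorem — is the product term. Expanding instead as $4\kappa^{*}=U_n+(U_{1n}-\mu_1)(U_{2n}-\mu_2)$ with $U_n$ a degenerate third-order $U$-statistic (its first projection $H_1$ vanishing under independence, since $\delta_1=0$), one finds that at the scaling $n$ the centred product $n(U_{1n}-\mu_1)(U_{2n}-\mu_2)$ does not disappear: it converges to a product of two independent Gaussians and so contributes a genuinely random term, which must then be matched against the eigenvalue expansion through the degenerate $U$-statistics Theorem~\ref{theo:standard_degen_u}. The advantage of the relation-based route is exactly that this product enters Lemma~\ref{kappacurlstar} in \emph{uncentred} form $U_{1n}U_{2n}$ with a bounded coefficient, so it merely converges to the constant $\mu_1\mu_2$ and the delicate random contribution is already packaged inside the $(Z_{kl}^2-1)$ structure of the limit of $n\tilde\kappa$.
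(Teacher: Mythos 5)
Your proof is correct, but it runs in exactly the opposite direction from the paper's. The paper proves Theorem \ref{kappastarind} from scratch: it writes $4\kappa^{*}=U_n+(U_{1n}-\mu_1)(U_{2n}-\mu_2)$, observes that the first projection of $U_n$ vanishes under independence, computes the second projection explicitly and shows that $\tfrac{n}{4}(U_n+V_n)$ reduces to $n\binom{n}{2}^{-1}\sum_{i<j}h_{F_1}(x_i,x_j)h_{F_2}(y_i,y_j)$ plus a term vanishing by the SLLN (the random product $n\bar g_1\bar g_2$ being cancelled by part of the second projection, exactly the delicate point you flag in your last paragraph), and then invokes Theorem \ref{theo:standard_degen_u}. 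It then uses Theorem \ref{kappastarind} together with Lemma \ref{kappacurlstar} to \emph{re-derive} Bergsma's Theorem \ref{kappatildadegen} in Remark \ref{kappatildeindproof} --- by precisely the algebra you perform, with the roles of $\kappa^{*}$ and $\tilde\kappa$ swapped. Your route (Lemma \ref{kappacurlstar} plus the cited limit (\ref{eqn:tildekapplimit}) plus Slutsky) is shorter and the asymptotic bookkeeping is right: the $n$-scaled correction converges a.s.\ to $\tfrac12(\mu_{12}-\mu_1\mu_2)=0$ under independence, so nothing more is needed. What it buys is brevity; what it costs is self-containedness. You must take Theorem \ref{kappatildadegen} as an external black box from Bergsma (2006), and within this paper's architecture --- where the authors announce they will supply the detailed proof of that theorem themselves, and do so via Theorem \ref{kappastarind} --- your argument would make Remark \ref{kappatildeindproof}(i) circular. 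As a standalone proof of the stated theorem, however, it is valid, and your closing diagnosis of why the direct proof is harder (the uncentred product $U_{1n}U_{2n}$ merely contributes a constant, whereas the centred product at scale $n$ contributes a non-degenerate Gaussian product that must be absorbed into the eigenvalue expansion) is an accurate description of what the paper's proof actually has to do.
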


\begin{proof}
	Note that now $\kappa=0$. Recall from Equation (\ref{eq:kappauv}) that 
	$$4\kappa^*=U_n+V_n, \ \text{where}\ \ V_n=(U_{1n}-\mu_1)(U_{2n}-\mu_2).$$
Under independence, 
	the first projection of $U_n$ is $0$. Hence under independence $n^{1/2}\hat \kappa$ converges to $0$ in probability.
	In this case we have to scale up and consider $n\hat \kappa$. 
	Consider the second projection of $U_n$ say $H_2((x_i,y_i),(x_j,y_j))$ is given by combinations of the second projections of $U_{1n},U_{2n}, U_{12n}$ and $U_{3n}$
so that 	$$U_n={n \choose 2}^{-1}\sum_{1\leq i <j \leq n}H_2\big( (X_i, Y_i), (X_j, Y_j)\big)+\epsilon_n,$$
where $n\epsilon_n\xrightarrow{P} 0$.
	By using the second projections of $U_{1n},U_{2n}, U_{12n}$ and $U_{3n}$, we can write $U_n$ as follows
 \begin{eqnarray*}
 	U_n&=&\Big[U_{12n}-\mu_{12}+\mu_1(U_{2n}-\mu_2)+ \mu_2(U_{1n}-\mu_1)-2(U_{3n}-\mu_{3})\Big]\\
 	&=&{n \choose 2}^{-1}\sum_{1\leq i < j \leq 
 		n}[|x_i-x_j||y_i-y_j|-\mu_{1}\mu_{2}]-[g_{F_{1}}(x_i)g_{F_{2}}(y_i)-\mu_{1}\mu_{2}]-[g_{F_{1}}(x_j)g_{F_{2}}(y_j)-\mu_{1}\mu_{2}]\\
 	&&\ + \mu_{1}\bigg({n \choose 2}^{-1}\sum_{1\leq i < j \leq n}[|y_i-y_j|-\mu_2]-(g_{F_2}(y_i)-\mu_2)-(g_{F_2}(y_j)-\mu_2)\bigg)\\
 		&& \ + \mu_{2}\bigg({n \choose 2}^{-1}\sum_{1\leq i < j \leq n}[|x_i-x_j|-\mu_1]-(g_{F_1}(x_i)-\mu_1)-(g_{F_1}(x_j)-\mu_1)\bigg)\\
 	&& \  \ - 2{3 \choose 2}{n \choose 2}^{-1}\sum_{1\leq i < j \leq 
 		n}\bigg(\dfrac{1}{6}\big[
 	|x_i-x_j|\big( g_{F_{2}}(y_i)+ g_{F_{2}}(y_j)\big)-2\mu_{1}\mu_{2}+ \big(g_{F_{1}}(x_i)+g_{F_{1}}(x_j)\big)\\
 	&& \  \ \ \ \ \ \ \ \ \ \ \ \  |y_i-y_j|-2\mu_{1}\mu_{2}+g_{F_{1}}(x_i)\ g_{F_{2}}(y_j)-\mu_{1}\mu_{2}
 	+g_{F_{1}}(x_j)\ g_{F_{2}}(y_i)-\mu_{1}\mu_{2}\big]\\
 	&& \ \ \ \ \ \ \  \ -\dfrac{1}{6}\big[2\big(g_{F_{1}}(x_i)g_{F_{2}}(y_i)-\mu_{1}\mu_{2}) 
 	+2 \mu_2\big(g_{F_{1}}(x_i)-\mu_{1}\big)
 	+2 \mu_1\big(g_{F_{2}}(y_i)-\mu_{2}\big)\big] \\
 	&&  \ \ \ \ \ \ \ \ -\dfrac{1}{6}\big[2\big(g_{F_{1}}(x_j)g_{F_{2}}(y_j)-\mu_{1}\mu_{2}) 
 	+2 \mu_2\big(g_{F_{1}}(x_j)-\mu_{1}\big)
 	+2 \mu_1\big(g_{F_{2}}(y_j)-\mu_{2}\big)\big]\bigg)+\epsilon_n
 \end{eqnarray*}
 \begin{eqnarray*}
 	U_n&=&{n \choose 2}^{-1}\sum_{1\leq i < j \leq 
 		n}\bigg[|x_i-x_j||y_i-y_j|-\mu_{1}\mu_{2}+[g_{F_{1}}(x_i)g_{F_{2}}(y_i)-\mu_{1}\mu_{2}]+[g_{F_{1}}(x_j)g_{F_{2}}(y_j)-\mu_{1}\mu_{2}]\\
 	&&\ \ \ \ \ \ \ \ \ \  + \mu_{1}[|y_i-y_j|-\mu_2])+\mu_{1}(g_{F_2}(y_i)-\mu_2)+\mu_{1}(g_{F_2}(y_j)-\mu_2)\\
 	&& \ \ \ \ \ \ \ \ \ \ + \mu_{2}[|x_i-x_j|-\mu_1])+\mu_{2}(g_{F_1}(x_i)-\mu_1)+\mu_{2}(g_{F_1}(x_j)-\mu_1)\\
 	&& \  \ \ \ \ \ \ \ \ \ \ - |x_i-x_j|\big( g_{F_{2}}(y_i)+ g_{F_{2}}(y_j)\big)+2\mu_{1}\mu_{2}- |y_i-y_j| \big(g_{F_{1}}(x_i)+g_{F_{1}}(x_j)\big) +2\mu_{1}\mu_{2}\\
 	&&\ \ \ \ \ \ \ \ \ \ \ \ \ -g_{F_{1}}(x_i)\ g_{F_{2}}(y_j)+\mu_{1}\mu_{2}
 	-g_{F_{1}}(x_j)\ g_{F_{2}}(y_i)+\mu_{1}\mu_{2}\bigg]+\epsilon_n
 \end{eqnarray*}
 \begin{eqnarray*}
 	U_n&=&{n \choose 2}^{-1}\sum_{1\leq i < j \leq 
 		n}\bigg[|x_i-x_j||y_i-y_j|+[g_{F_{1}}(x_i)g_{F_{2}}(y_i)]+[g_{F_{1}}(x_j)g_{F_{2}}(y_j)]\\
 	&&\ \ \ \ \ \ \ \ \ \  + \mu_{1}[|y_i-y_j|])+(2\mu_{1}(g_{F_2}(y_i))-\mu_{1}(g_{F_2}(y_i)))+(2\mu_{1}(g_{F_2}(y_j)-\mu_{1}(g_{F_2}(y_j)))\\
 	&& \ \ \ \ \ \ \ \ \ \ + \mu_{2}[|x_i-x_j])+(2\mu_{2}(g_{F_1}(x_i)-\mu_{2}(g_{F_1}(x_i))+(2\mu_{2}g_{F_1}(x_j)-\mu_{2}g_{F_1}(x_j))\\
 	&& \  \ \ \ \ \ \ \ \ \ \ - |x_i-x_j|\big( g_{F_{2}}(y_i)+ g_{F_{2}}(y_j)\big)- |y_i-y_j| \big(g_{F_{1}}(x_i)+g_{F_{1}}(x_j)\big)(-2g_{F_{1}}(x_i)\ g_{F_{2}}(y_j)\\&&\ \ \ \ \ \ \ \ +g_{F_{1}}(x_i)\ g_{F_{2}}(y_j))
 	+(-2g_{F_{1}}(x_j)\ g_{F_{2}}(y_i)+g_{F_{1}}(x_j)\ g_{F_{2}}(y_i))-3\mu_{1}\mu_{2}\bigg]+\epsilon_n
 \end{eqnarray*}
 \begin{eqnarray*}
 	U_n&=&{n \choose 2}^{-1}\sum_{1\leq i < j \leq 
 		n}\big[|x_i-x_j|- g_{F_{1}}(x_i)- g_{F_{1}}(x_j)+\mu_{1}\big] \times \big[|y_i-y_j|-g_{F_{2}}(y_i)- g_{F_{2}}(y_j)+\mu_{2}\big]\nonumber\\
 	&& \ \ \ \ \ \ \ \ \ \ \ \ \  -2 \big(g_{F_{1}}(x_j)g_{F_{2}}(y_i)-\mu_{2}g_{F_{1}}(x_j)-\mu_{1}g_{F_{2}}(y_i)+\mu_{1}\mu_{2}\big)\\
 	&& \ \ \ \ \ \ \ \ \ \ \ \ \  -2 \big(g_{F_{1}}(x_i)g_{F_{2}}(y_j)-\mu_{2}g_{F_{1}}(x_i)-\mu_{1}g_{F_{2}}(y_j)+\mu_{1}\mu_{2}\big)+\epsilon_n.\\ &=&{n \choose 2}^{-1}\sum_{1\leq i < j \leq 
 		n}\big[|x_i-x_j|- g_{F_{1}}(x_i)- g_{F_{1}}(x_j)+\mu_{1}] \times \big[|y_i-y_j|-g_{F_{2}}(y_i)- g_{F_{2}}(y_j)+\mu_{2}\big]\nonumber\\
 	&& \ \ \ \ \ \ \ \ \ \ \ \ \ -2\big((g_{F_{1}}(x_i)-\mu_{1})\ (g_{F_{2}}(y_j)-\mu_{2})+(g_{F_{1}}(x_j)-\mu_{1})\ (g_{F_{2}}(y_i)-\mu_{2})\big)+\epsilon_n.
 \end{eqnarray*}
Therefore,
	\begin{eqnarray}
		H_2((x_i,y_i),(x_j,y_j))&=&\big[|x_i-x_j|- g_{F_{1}}(x_i)- g_{F_{1}}(x_j)+\mu_{1}] \nonumber\\ && \ \ \ \ \ \ \ \ \ \ \ \ \times   \big[|y_i-y_j|-g_{F_{2}}(y_i)- g_{F_{2}}(y_j)+\mu_{2}\big]\nonumber\\
		&& \ \  -2\big((g_{F_{1}}(x_i)-\mu_{1})\ (g_{F_{2}}(y_j)-\mu_{2})+(g_{F_{1}}(x_j)-\mu_{1})\ (g_{F_{2}}(y_i)-\mu_{2})\big).\nonumber\label{secproj}
	\end{eqnarray}

	Note that, 
\begin{eqnarray}
		nV_n&=&(n^{1/2}2\bar g_1+ n^{1/2}R_{1n}) (n^{1/2}2\bar g_2+n^{1/2}R_{2n})\nonumber\\
		&=&4n^{1/2}\bar g_1n^{1/2}\bar g_2+\delta_n,\label{eq:u1u2prod}
	\end{eqnarray}
where $\delta_n\xrightarrow{P} 0$. Considering the leading terms from $U_n$ and $V_n$ in the further calculations since $\delta_n\to 0$ and $n\epsilon_n\to 0$  we write
\begin{eqnarray}
		\frac{n}{4}(U_n + V_n) &\simeq& \frac{n}{4} {n \choose 2}^{-1}\sum_{1\leq i <j \leq n}H_2 \big((x_i, y_i), (x_j, y_j)\big) 
		+ n \bar{g_1}\bar{g_2}\\
		\\&=&n{n \choose 2}^{-1}\sum_{1\leq i <j \leq n}\big[\frac{1}{4}	\big[|x_i-x_j|- g_{F_{1}}(x_i)- g_{F_{1}}(x_j)+\mu_{1}] \nonumber\\ && \ \ \ \ \ \ \ \ \ \ \ \ \times   \big[|y_i-y_j|-g_{F_{2}}(y_i)- g_{F_{2}}(y_j)+\mu_{2}\big]\nonumber\\
		&& \ \  -2\big((g_{F_{1}}(x_i)-\mu_{1})\ (g_{F_{2}}(y_j)-\mu_{2})+(g_{F_{1}}(x_j)-\mu_{1})\ (g_{F_{2}}(y_i)-\mu_{2})\big)\big]\nonumber\	
		+n \bar{g_1}\bar{g_2}\nonumber  \\
		&=&n {n \choose 2}^{-1}\sum_{1\leq i <j \leq n}h_{F_1}(x_i, x_j)h_{F_2}(y_i, y_j)+n \bar{g_1}\bar{g_2}\\
		&& \  \ \ \ \ \ \ -\frac{2n}{4}{n \choose 2}^{-1}\big[\sum_{1\leq i <j \leq n}(g_{F_{1}}(x_i)-\mu_{1})\ (g_{F_{2}}(y_j)-\mu_{2})+(g_{F_{1}}(x_j)-\mu_{1})\ (g_{F_{2}}(y_i)-\mu_{2})\big]\nonumber\\
		&=& n {n \choose 2}^{-1}\sum_{1\leq i <j \leq n}h_{F_1}(x_i, x_j)h_{F_2}(y_i, y_j)+n \bar{g_1}\bar{g_2}
		\\
		&& \  \ \ \ \ \ \ -\big[n\bar{g_1}\bar{g_2}-\frac{1}{n}\sum_{i=1}^{n}\big((g_{F_{1}}(x_i)-\mu_{1})\ (g_{F_{2}}(y_i)-\mu_{2})\big)\big]\nonumber\\
		&=& n {n \choose 2}^{-1}\sum_{1\leq i <j \leq n}h_{F_1}(x_i, x_j)h_{F_2}(y_i, y_j)+\frac{1}{n}\sum_{i=1}^{n}\big((g_{F_{1}}(x_i)-\mu_{1})\ (g_{F_{2}}(y_i)-\mu_{2})\big).\label{HdecomplusRn}
	\end{eqnarray}
By the strong law of large numbers, the second term goes to $0$ almost surely.		
	By Theorem \ref{theo:standard_degen_u}
	$$ n {n \choose 2}^{-1}\sum_{1\leq i <j \leq n}h_{F_1}(x_i, x_j)h_{F_2}(y_i, y_j)\xrightarrow D \sum_{i,j=0}^{\infty}\lambda_i\eta_j(Z_{ij}^2-1),$$
where, $\{\lambda_i\}$,$\{\eta_j\}$ are the eigenvalues corresponding to spectral decomposition of $h_{F_1}$ and $h_{F_2}$ respectively and $Z_{ij}$ are i.i.d.~standard normal variables.
	This completes the proof.
\end{proof}


\begin{remark}\label{kappatildeindproof}
	(i) The asymptotic distribution result for $\tilde{\kappa}$ in Theorem \ref{kappatildadegen} follows from Theorem \ref{kappastarind}. 
	To see this, from Lemma \ref{kappacurlstar}, 
\begin{eqnarray*}
n\tilde{\kappa}&=&n\kappa^{*}+\frac{n}{4}\big[\frac{-2n}{(n-1)^2}U_{12n}+\big(\frac{2}{(n-1)^2}\big)U_{3n}+\big(\frac{2}{n-1}\big)U_{1n}U_{2n}\big].
\end{eqnarray*}
Under independence $\mu_{12}=\mu_{1}\mu_{2}$, and 
	as $n \rightarrow \infty$,  the second term above converges almost surely to 
		$\frac{1}{4}[-2\mu_{12}+0+2\mu_{1}\mu_{2}]
		=0$.
Therefore the result follows from Theorem \ref{kappastarind}.
		\vskip3pt
		
\noindent 
(ii) The asymptotic distribution result of $\hat{\kappa}$ in Theorem \ref{kappatildadegen} also follows from Theorem \ref{kappastarind}. 
First note that by the orthonormality of $\{g_{1k}(\cdot)\}$ and $\{g_{2k}(\cdot)\}$ we have
\begin{equation*}
	\BE_{F_{1}} [h_{F_1}(X_1, X_1)]=\sum_{i=0}^{\infty}\lambda_{i}\ \ \text{and}\ \  \BE_{F_{2}}[ h_{F_2}(Y_1, Y_1)]=\sum_{j=0}^{\infty}\eta_{j}.
			\end{equation*}

Also note that from \eqref{eq:hdef} we obtain,
\begin{equation}
h_{F_1}(x_1, x_1)=-\dfrac{1}{2} \big[-2g_{F_1}(x_1)+\mu_{1}\big]\ \ \text{and}\ \  \BE_{F_{1}}[ h_{F_1}(X_1, X_1)]=\frac{\mu_{1}}{2}=\sum_{i=0}^{\infty}\lambda_{i}, 
\end{equation}
\begin{equation}
h_{F_2}(y_1, y_1)=-\dfrac{1}{2} \big[-2g_{F_2}(y_1)+\mu_{2}\big]\ \ \text{and} \ \ \BE_{F_{2}} [h_{F_1}(Y_1, Y_1)]=\frac{\mu_{2}}{2}=\sum_{j=0}^{\infty}\eta_{j}.
\end{equation}

By Lemma \ref{kappacurlstar},
$$n\hat{\kappa}	=n\kappa^{*}+\frac{n}{4}\big[\frac{(2-3n)}{n^2}U_{12n}-2\big(\frac{2-3n}{n^2}\big)U_{3n}+\big(\frac{1-2n}{n^2}\big)U_{1n}U_{2n}\big].$$

Hence, as $n \rightarrow \infty$,
\begin{eqnarray*}
		n\hat{\kappa}&\xrightarrow{D}&\sum_{i,j=0}^{\infty}\lambda_i\eta_j(Z_{ij}^2-1)+\frac{1}{4}[\mu_{1}\mu_{2}]\\
		&=&\sum_{i,j=0}^{\infty}\lambda_i\eta_j(Z_{ij}^2-1)+\sum_{i,j=0}^{\infty}\lambda_i\eta_j=\sum_{i,j=0}^{\infty}\lambda_i\eta_jZ_{ij}^2.
\end{eqnarray*}
\end{remark}
These results are in agreement with the results obtained in Bergsma (2006).
		

\section{Simulations}\label{sec:simu}

Extensive simulations were carried out under both dependence and independence scenarios by varying the values of the parameter $\theta$. We investigated the estimators also from the perspectives of bias and power (for testing of independence). In addition, for specific cases, we also explored the effect of sample size.

\subsection{Distribution under independence}\label{subsec:independent}

The case where $X$ and $Y$ are independent may be termed the ``null case'', and has been partly explored in \cite{bergsma2006new}. From Theorems \ref{kappatildadegen} and \ref{kappastarind}, the asymptotic distributions of $n\kappa^{*}$, $n\tilde\kappa$ and $n\hat\kappa$ involve the eigenvalues $\{\lambda_i\}$ and $\{\eta_j\}$, computation of which requires solving functional integral equations. This issue was addressed by \cite{bergsma2006new} by a suitable discrete approximation to the eigenvalue equations. The details are in Appendix B, where we have also rectified some errors in the original algorithm. The asymptotic distributions of the estimates can be approximated by using these discrete approximations, and serve as alternatives to simulation based approximation.

In Figures \ref{fig:ustatasym} and \ref{fig:vstatasym}, we illustrate the (approximate) asymptotic distributions of the three estimates using the largest $1000$ eigenvalues. Incidentally, it was observed that (results not shown), the first $10$ largest eigenvalues approximate up to $90$ percent of the total eigenvalues.

We compared the simulated distributions of $n\kappa^{*}$, $n\tilde\kappa$ and $n\hat{\kappa}$ based on $1000$ replications, for the six chosen bivariate distributions, with $n=500$ samples generated for each replicate of the estimate.

\begin{figure}[h!]
	\centering
	\includegraphics[height=60mm, width=0.9\linewidth]{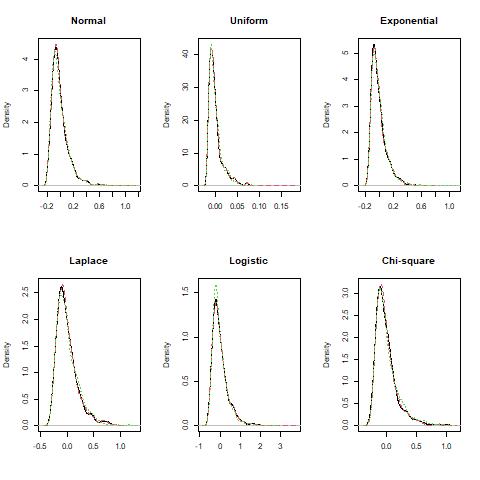}
	\caption{Distribution of $U$-statistic based estimates, for the six chosen distributions. Asymptotic limit based on $k=1000$ eigenvalues, in green. Simulated limit for $\tilde{\kappa}$ in black, and for $\kappa^*$ in red.}
	\label{fig:ustatasym}
\end{figure}
\begin{figure}[h!]
	\centering
	\includegraphics[height=60mm, width=0.9\linewidth]{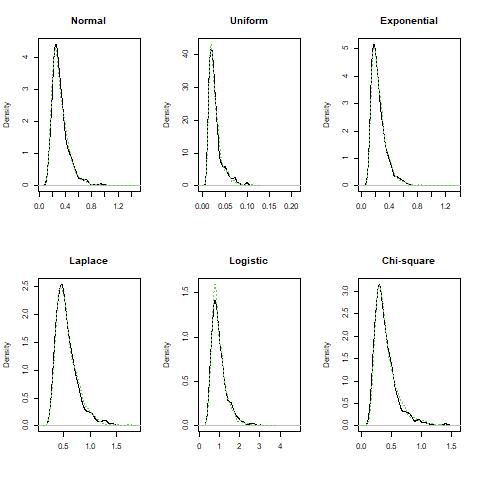}
	\caption{
	$V$-statistic based estimate $\hat \kappa$, for the six chosen distributions. Asymptotic limit, based on $k=1000$ eigenvalues, in green. Simulated limit in black.}
	\label{fig:vstatasym}
\end{figure}

As may be observed, there is a strong agreement between the theoretical approximation (of the asymptotic distributions) based on discrete approximation, with the simulation based approximations. Thus, either may be used as a reliable estimate for the ``null'' distribution, for further exploration.

\subsection{Distribution under dependence}\label{subsec:dependent}

We explore various aspects, by varying the bivariate distributions as well as the value of $\theta$, the latter to cover a spectrum of dependence.

In many cases an explicit relationship between $\theta$ and $\kappa (\theta)$ may not be available. We explore their relationship through the three ``non-parametric'' estimates of $\kappa$, namely $\hat\kappa$, $\tilde\kappa$, and $\kappa^{*}$. By using the asymptotic results for these estimates based on large sample, we explore the relationship between (empirical) $\kappa$ values and (true) $\theta$ for the six distributions (details given in Appendix \ref{sec:bvd}) under various degrees of dependence, see Figure \ref{fig:3ests_6dst_mean}. 
\begin{figure}[h!]
	\centering
	\includegraphics[height=60mm, width=0.75\linewidth]{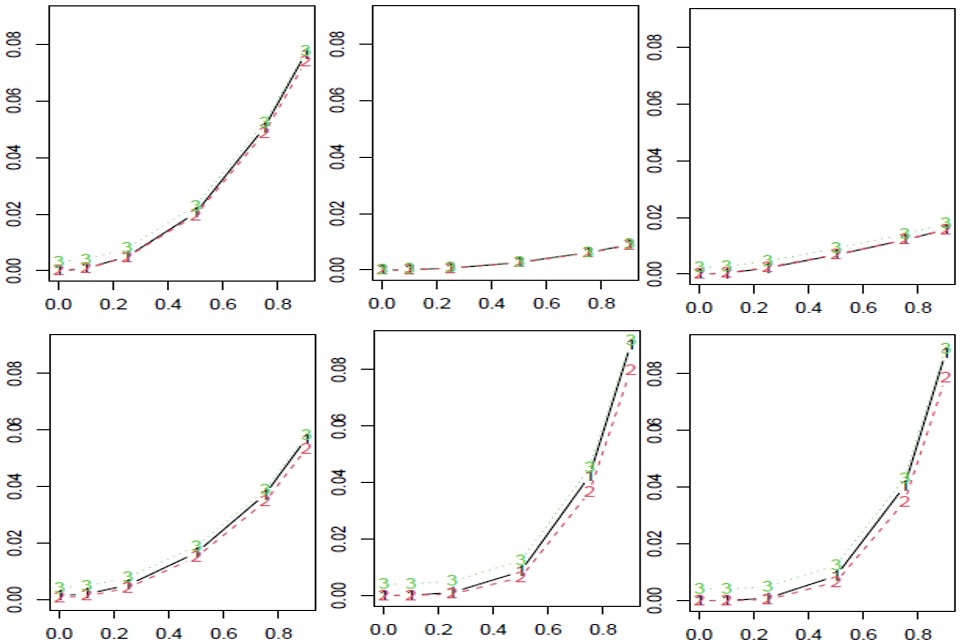}
	\caption{Empirical means (Y-axes) of, 
		$\kappa^*$ (marked-1), $\tilde\kappa$ (marked-2), and $\hat{\kappa}$ (marked-3), for the 6 distributions with varying values of the dependency parameter $\theta$ (X-axes). In each case, the sample size is $n=100$, and the number of replicates is $1000$.}
	\label{fig:3ests_6dst_mean}
\end{figure}

Recall that though $\theta$ is associated with dependence, in physical sense this parameter represents a different characteristic for each of the six bivariate distributions. In each case the distributions of the three estimators of $\kappa$ clearly  appear to be  sensitive to the magnitude of $\theta$. 

Figure \ref{fig:3ests_6dst_dep} presents the box-plots of the empirical asymptotic distributions of the three  estimates with various underlying distributions having varying degrees of dependence.
\begin{figure}[H]
    \centering
    \includegraphics[height=150mm, width=0.7\linewidth]{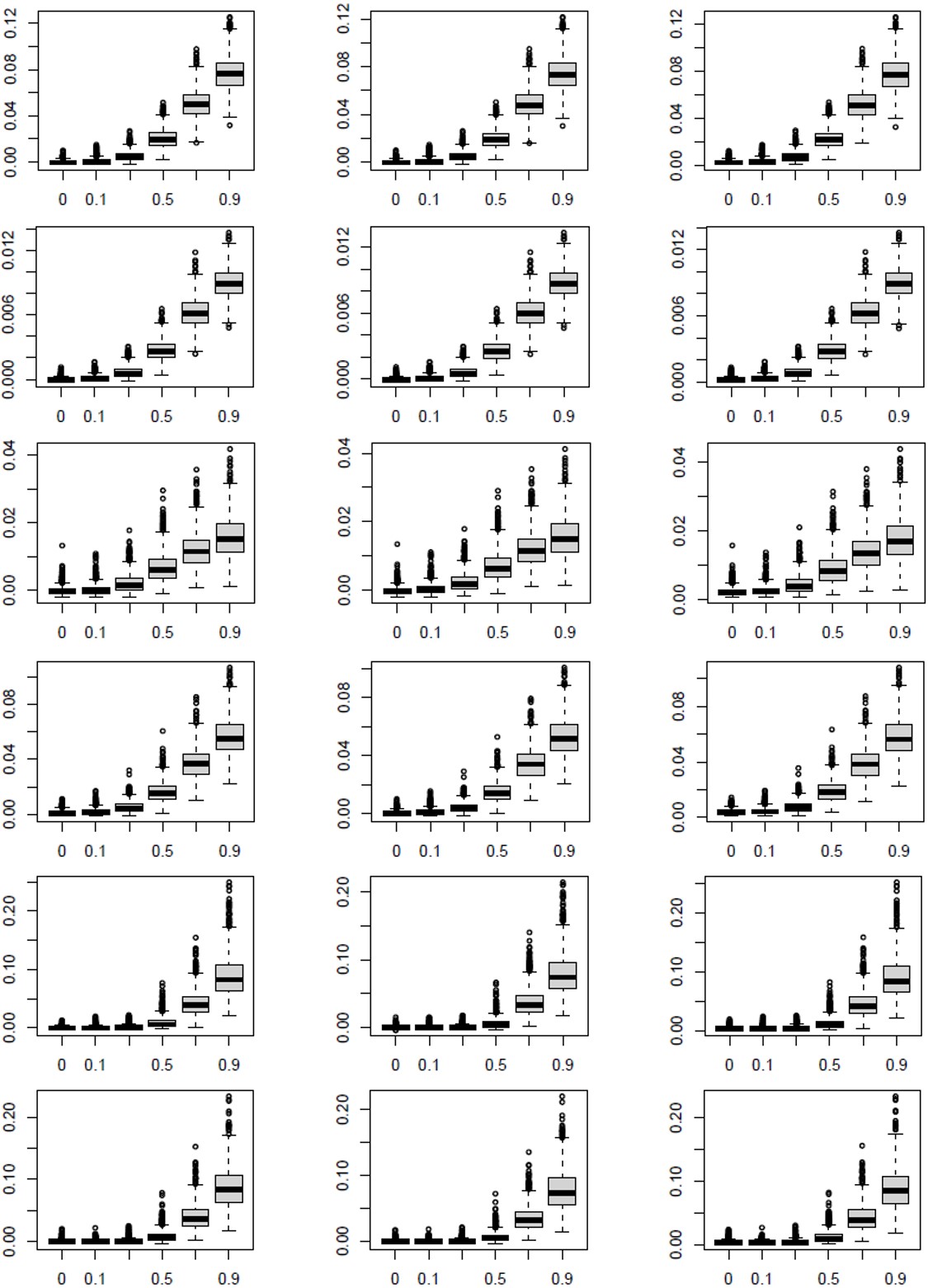}
    \caption{Box plots of 
	$\kappa^*$ (left), $\tilde\kappa$ (middle), and $\hat{\kappa}$ (right) for the six bivariate distributions.\newline
	The true values of the dependency parameter $\theta$ are on the horizontal axes.	\newline
	Sample size $n=100$ and $1000$ replicates.}
\label{fig:3ests_6dst_dep}
\end{figure}

This systematic sensitivity to the underlying parameter values, make these estimates promising to be useful for further inferential purposes. 

Next we explored the accuracy of these approximate distributions in capturing the true value (of $\kappa$). For ease of comparison, we considered two distributions, namely the normal and the exponential. In Figure \ref{fig:3ests_2ns}, we have presented the empirical distributions, based on 2500 samples, for the three estimators. 
\begin{figure}[h!]
	\centering
	\includegraphics[width=0.8\linewidth]{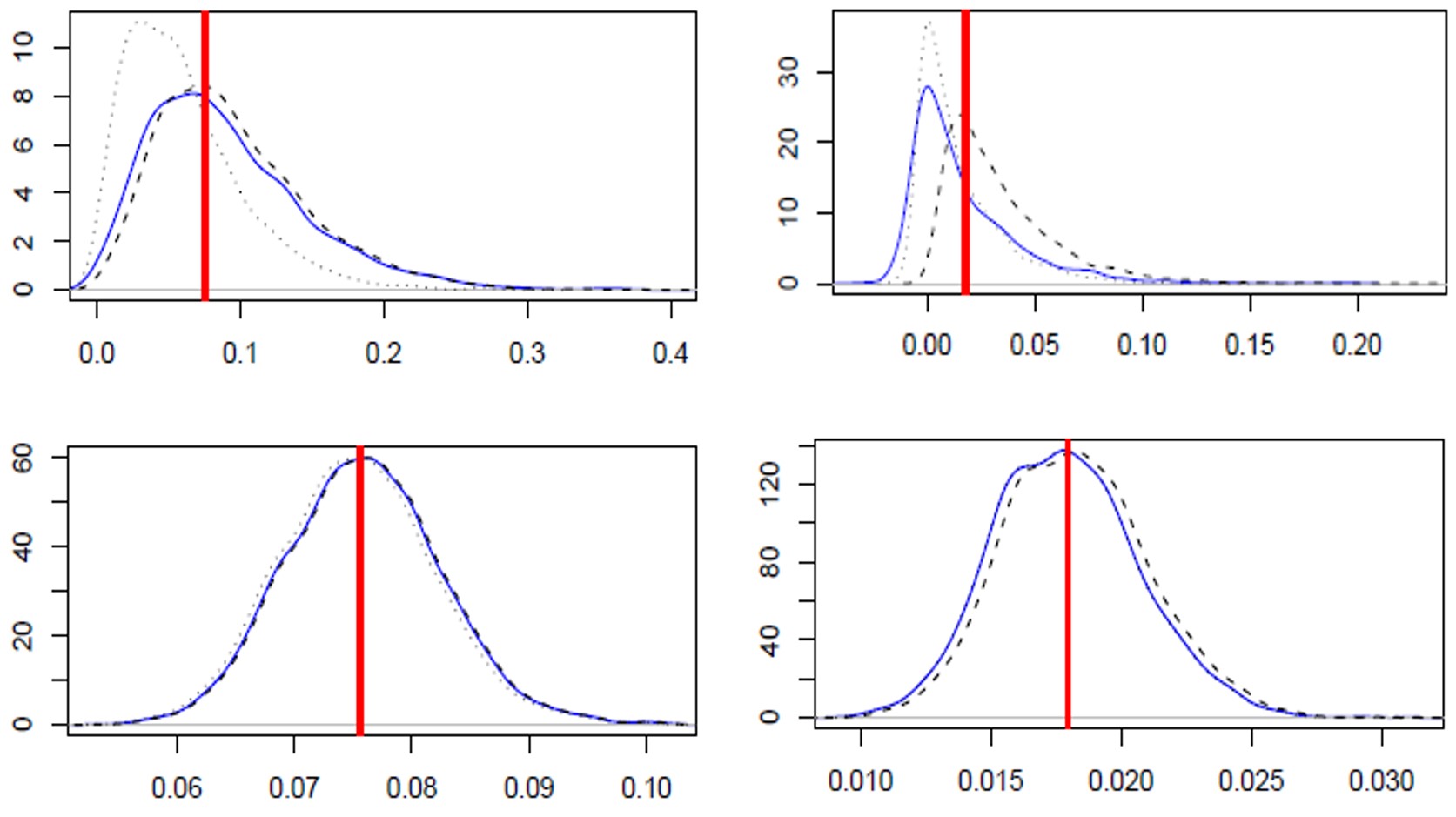}
	\caption{Empirical distributions of 
		$\kappa^*$ (blue), $\tilde\kappa$ (dotted), and $\hat{\kappa}$ (dashed). True value of $\kappa$ at red vertical line. Left: bivariate normal. Right: bivariate exponential. Sample sizes $n=10$ (top), and $n=500$ (bottom). }
	\label{fig:3ests_2ns}
\end{figure}
As may be expected, for larger sample sizes, all three estimates behave rather similarly, whereas there are distinct differences when the sample size is small. The true value however seems to be well captured by all three estimates even when the sample size is small.

Nevertheless, for small sample sizes there is an effect of the underlying distribution, since the pattern of variation seem to be differing in the two families of bivariate normal and bivariate exponential. See Figure \ref{fig:3ests_2dist_n10}. Also there seems to be an indication that, in general, the two $U$-statistics based estimators, $\tilde\kappa$ and $\kappa^*$, are distributionally similar. However the $V$-statistics based estimator $\hat\kappa$ differs slightly in the relationship with $\theta$, as compared to $\tilde\kappa$ and $\kappa^{*}$. 

\begin{figure}[h!]
	\centering
	\includegraphics[height=40mm, width=0.7\linewidth]{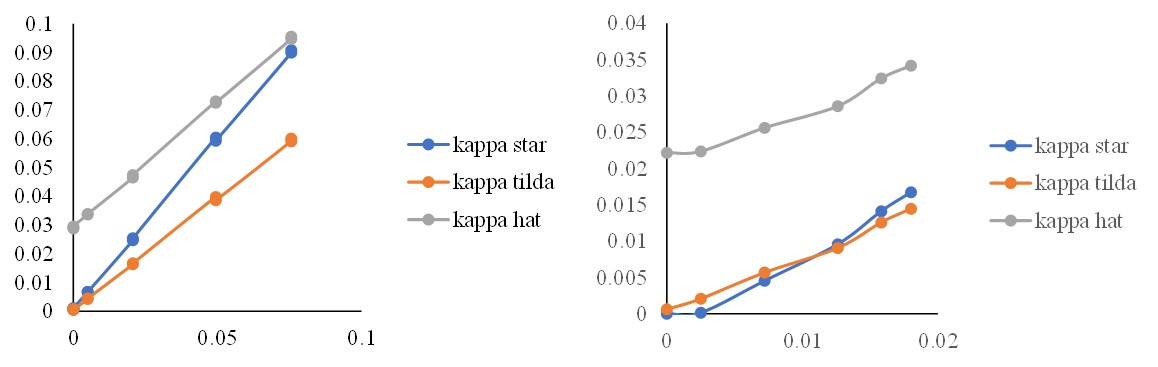}
	\caption{Empirical means ($Y$-axes) under dependence for $\hat\kappa$, $\tilde\kappa$ and $\kappa^{*}$  as $\theta$ ($X$-axes) varies. 
	\newline Sample size is $n=10$. Left: bivariate normal. Right: bivariate exponential.}
	\label{fig:3ests_2dist_n10}
\end{figure}

The estimator $\hat\kappa$ appears to have a consistent upward bias. This is rather clearly visible for the bivariate Exponential distribution (see Figure \ref{fig:3ests_2dist_n10}). The bias of $\hat{\kappa}$ has been briefly explored via simulations, and a summary is shown in Figure \ref{fig:biascomp}. 

For bivariate exponential, at first glance it may appear that $\hat{\kappa}$ is substantially different from the other two estimates due to this bias. However our simulation suggests that although the underlying parameter ($\theta$) has some effect on this bias (see Figure \ref{fig:biascomp} right panel) with increased sample size this bias can be brought within an acceptable threshold.
\begin{figure}[h!]
	\centering
	\includegraphics[height=40mm, width=0.7\linewidth]{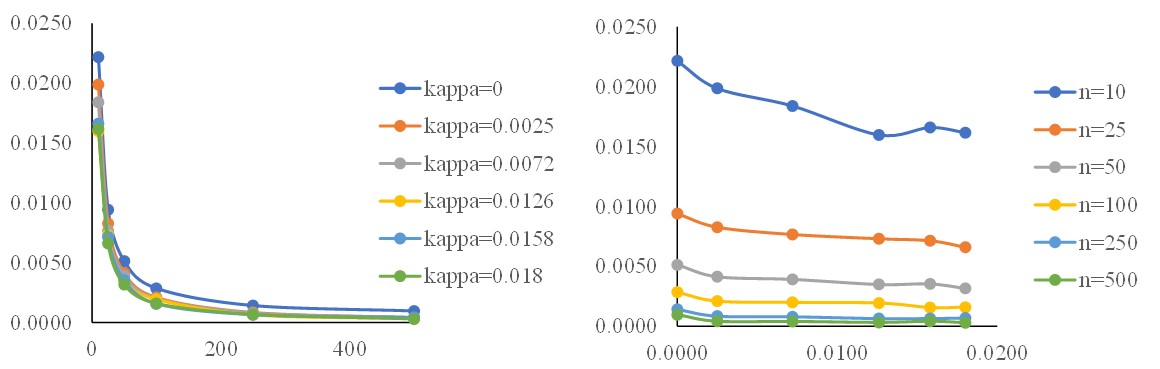}
	\caption{Empirical bias ($Y$-axes) of $\hat{\kappa}$ with samples from bivariate exponential distributions. \newline $X$-axes presents  sample sizes (left panel), and true $\kappa$ (right panel).}
	\label{fig:biascomp}
\end{figure}

\subsection{Power for testing independence}\label{subsec:power}

Since it is a common practice to use measures of dependence also for the purpose of testing independence, we explored the powers of these estimators of $\kappa$ for testing independence in various setup.
As expected the two $U$-statistics based estimators $\kappa^*$ and $\tilde\kappa$ perform quite comparably, and they perform better than $\hat{\kappa}$ in this regard. See Figure \ref{fig:2est_6dist_pwr_reg} for further information.

\begin{figure}[h!]
	\centering
	\includegraphics[height=50mm, width=0.6\linewidth]{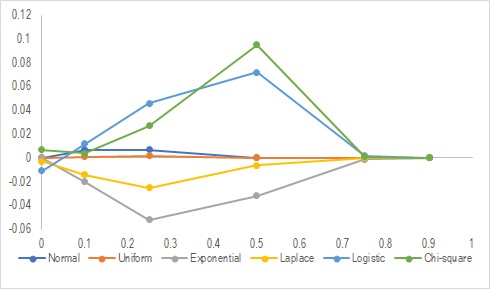}
	\caption{Differences in empirical powers ($Y$-axes) of $\kappa^*$ and $\tilde\kappa$ with samples from various bivariate distributions and varying degree dependence as measured by $\theta$ (X-axes), with sample size $n=100$ and $1000$ replicates.}
	\label{fig:2est_6dist_pwr_reg}
\end{figure}

Based on Figure \ref{fig:2est_6dist_pwr_reg} there are distributional families for which each estimator marginally performs better than the others. For Normal and uniform, the performances are comparable. For Logistic and Chi-square, $\kappa^*$ has noticeably higher power in detecting departure from independence. For logistic, it appeared that for small values of $\theta$,  $\tilde\kappa$ might perform better. However an appropriately increased sample size overcomes this issue in favor of $\kappa^*$ (results not shown). We also carried out assessment of contiguous hypotheses and there are no mentionable changes in the conclusions.
\vskip5pt

We now considered comparisons with other eight measures of dependence available in the literature. They include distance/kernel based methods   (a) Distance Correlation (dCor), (b) Hoeffding's $D$, (c) Ball Correlation Statistics; rank based methods (d) Bergsma's $\tau$, (e) Chatterjee’s $\xi$, and copula based methods  (f)) Quantification of Asymmetric Dependence (QAD),  (g) Multilinear Copula,  and finally a mutual information bases method, (h) Copula Entropy. These have been chosen due to their prominence in the literature and applications, and also due to the variety of techniques they cover between them. One may refer to \cite{ma2022}, \cite{chatterjee2021} and \cite{tjostheim2022} for further information on, and discussions of, such methods.

We carried out extensive simulation studies with these eight, and the three estimators of Bergsma's $\kappa$. It appears that the proposed estimator of Bergsma's $\kappa$ either outperforms the competing methods, or performs equally well as them, in terms of the empirically estimated powers (see Figure \ref{fig:power_comp}).

\begin{figure}[h!]
	\vskip5pt
	\centering
	\includegraphics[height=70mm, width=0.8\linewidth]{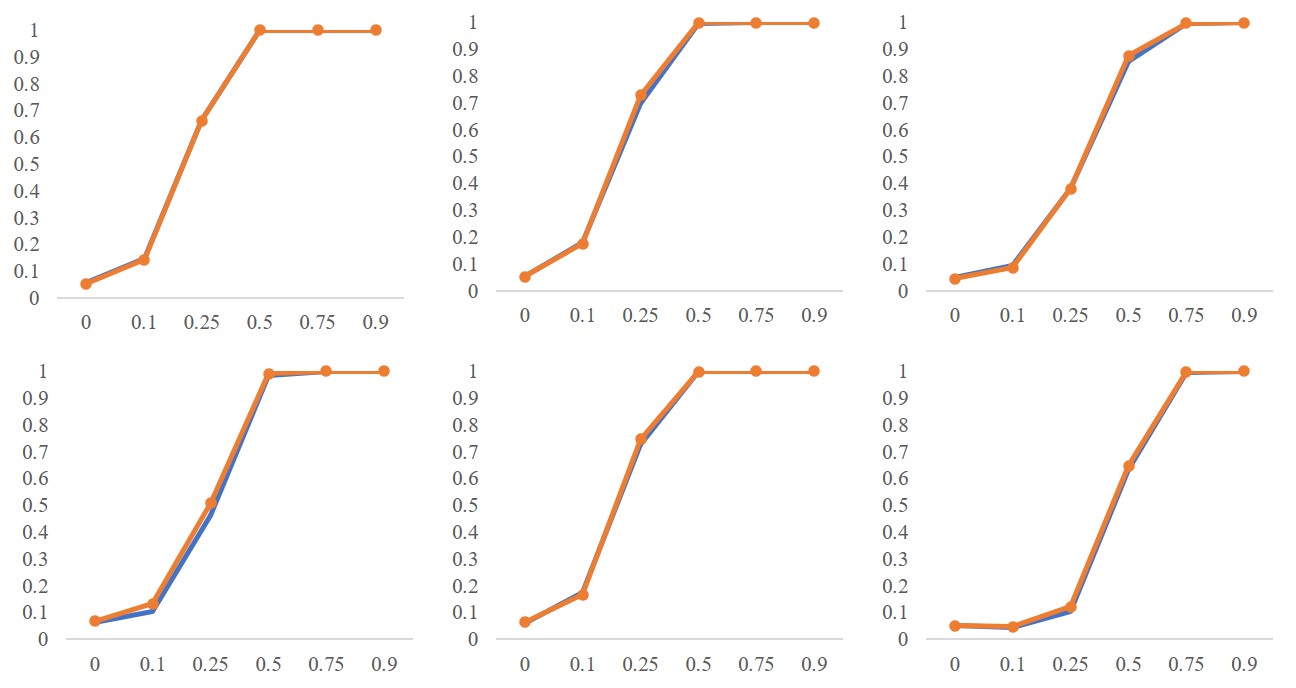}
	\caption{On $Y$-axes, estimated powers of $\kappa^*$, and three of the other top performing measures, under varying $\theta$ ($X$-axes). Line without marker for $\kappa^*$; with marker for the competing measure. \newline 
	Clockwise from top left: \newline 
	--samples from bivariate distributions--Normal, Uniform, Exponential, Laplace, Logistic, Chi-square.
	\newline
	 --the competing measures are: dCor, Hoeffding's $D$, dCor, Bergsma's $\tau$, Hoeffding's $D$ and dCor.}
	\label{fig:power_comp}
\end{figure}

In Table \ref{table:est_power} we present power computation for tests of independence, which are based on the three estimators of $\kappa$, as well as on other known measures of dependence.
\begin{table}[H]
\footnotesize
\caption{
\footnotesize Estimated powers for various dependence measures based on data from six bivariate distributions with varying degrees of dependence.}
\vskip3pt
\begin{center}
\begin{tabular}{l|cccccc|cccccc} \hline
Theta $\rightarrow$ & 0 & 0.1 & 0.25 & 0.5 & 0.75 & 0.9 & 0 & 0.1 & 0.25 & 0.5 & 0.75 & 0.9 \\ \hline
& \multicolumn{6}{c|}{Bivariate Normal} & \multicolumn{6}{c}{Bivariate Uniform} \\ \hline
Proposed $\kappa^{*}$ & 0.06 & 0.15 & 0.66 & 1 & 1 & 1 & 0.05 & 0.18 & 0.7 & 1 & 1 & 1 \\
Bergsma's $\tilde{\kappa}$ & 0.06 & 0.14 & 0.65 & 1 & 1 & 1 & 0.05 & 0.18 & 0.7 & 1 & 1 & 1 \\
Bergsma's $\hat \kappa$ & 0.05 & 0.14 & 0.65 & 1 & 1 & 1 & 0.06 & 0.18 & 0.7 & 1 & 1 & 1 \\ \hline
Copula Entropy & 0.05 & 0.05 & 0.12 & 0.64 & 1 & 1 & 0.07 & 0.06 & 0.23 & 0.97 & 1 & 1 \\
dCor & 0.05 & 0.14 & 0.66 & 1 & 1 & 1 & 0.05 & 0.17 & 0.71 & 1 & 1 & 1 \\
Hoeffding's $D$ & 0.05 & 0.14 & 0.63 & 1 & 1 & 1 & 0.05 & 0.17 & 0.73 & 1 & 1 & 1 \\
Bergsma's $\tau$ & 0.06 & 0.14 & 0.63 & 1 & 1 & 1 & 0.05 & 0.18 & 0.71 & 1 & 1 & 1 \\
Ball & 0.04 & 0.08 & 0.35 & 0.96 & 1 & 1 & 0.06 & 0.16 & 0.71 & 1 & 1 & 1 \\
Quant Asymp Dep & 0.04 & 0.08 & 0.29 & 0.93 & 1 & 1 & 0.04 & 0.09 & 0.33 & 0.97 & 1 & 1 \\
Multilinear copula & 0.06 & 0.14 & 0.64 & 1 & 1 & 1 & 0.05 & 0.17 & 0.7 & 1 & 1 & 1 \\
Xi & 0.04 & 0.05 & 0.13 & 0.68 & 1 & 1 & 0.05 & 0.05 & 0.18 & 0.85 & 1 & 1 \\ \hline
& \multicolumn{6}{c|}{Bivariate Exponential} & \multicolumn{6}{c}{Bivariate Laplace} \\ \hline
Proposed $\kappa^{*}$ & 0.05 & 0.1 & 0.38 & 0.86 & 1 & 1 & 0.06 & 0.1 & 0.46 & 0.98 & 1 & 1 \\
Bergsma's $\tilde\kappa$ & 0.05 & 0.12 & 0.44 & 0.88 & 1 & 1 & 0.06 & 0.1 & 0.49 & 0.99 & 1 & 1 \\
Bergsma's $\hat\kappa$ & 0.05 & 0.09 & 0.37 & 0.81 & 0.99 & 1 & 0.06 & 0.09 & 0.43 & 0.97 & 1 & 1 \\ \hline
Copula Entropy & 0.06 & 0.06 & 0.09 & 0.23 & 0.53 & 0.68 & 0.04 & 0.06 & 0.13 & 0.57 & 1 & 1 \\
dCor  & 0.05 & 0.09 & 0.38 & 0.88 & 1 & 1 & 0.06 & 0.11 & 0.47 & 0.99 & 1 & 1 \\
Hoeffding's $D$ & 0.07 & 0.09 & 0.37 & 0.84 & 1 & 1 & 0.07 & 0.13 & 0.5 & 0.99 & 1 & 1 \\
Bergsma's $\tau$ & 0.07 & 0.09 & 0.38 & 0.84 & 1 & 1 & 0.07 & 0.13 & 0.51 & 0.99 & 1 & 1 \\
Ball & 0.05 & 0.07 & 0.23 & 0.69 & 0.96 & 0.99 & 0.05 & 0.07 & 0.21 & 0.84 & 1 & 1 \\
Quant Asymp Dep & 0.05 & 0.09 & 0.15 & 0.51 & 0.88 & 0.96 & 0.05 & 0.06 & 0.23 & 0.88 & 1 & 1 \\
Multilinear copula & 0.07 & 0.09 & 0.38 & 0.85 & 1 & 1 & 0.07 & 0.13 & 0.51 & 0.99 & 1 & 1 \\
Chatterjee's $\psi$ & 0.05 & 0.05 & 0.09 & 0.25 & 0.54 & 0.68 & 0.04 & 0.04 & 0.14 & 0.62 & 1 & 1 \\ \hline
& \multicolumn{6}{c|}{Bivariate Logistic} & \multicolumn{6}{c}{Bivariate Chi-square} \\ \hline
Proposed $\kappa^{*}$ & 0.06 & 0.18 & 0.73 & 1 & 1 & 1 & 0.05 & 0.04 & 0.11 & 0.64 & 1 & 1 \\
Bergsma's $\tilde\kappa$ & 0.06 & 0.17 & 0.71 & 1 & 1 & 1 & 0.05 & 0.04 & 0.07 & 0.56 & 0.99 & 1 \\
Bergsma's $\hat\kappa$ & 0.06 & 0.18 & 0.71 & 1 & 1 & 1 & 0.05 & 0.05 & 0.1 & 0.59 & 0.99 & 1 \\ \hline
Copula Entropy & 0.06 & 0.06 & 0.23 & 0.97 & 1 & 1 & 0.04 & 0.06 & 0.05 & 0.11 & 0.67 & 1 \\
dCor & 0.06 & 0.17 & 0.73 & 1 & 1 & 1 & 0.05 & 0.05 & 0.12 & 0.65 & 1 & 1 \\
Hoeffding's $D$ & 0.06 & 0.17 & 0.75 & 1 & 1 & 1 & 0.05 & 0.05 & 0.06 & 0.36 & 1 & 1 \\
Bergsma's $\tau$ & 0.06 & 0.17 & 0.74 & 1 & 1 & 1 & 0.05 & 0.06 & 0.06 & 0.37 & 1 & 1 \\
Ball & 0.06 & 0.14 & 0.71 & 1 & 1 & 1 & 0.05 & 0.04 & 0.07 & 0.37 & 0.99 & 1 \\
Quant Asymp Dep & 0.05 & 0.09 & 0.38 & 0.99 & 1 & 1 & 0.05 & 0.05 & 0.06 & 0.17 & 0.9 & 1 \\
Multilinear copula & 0.06 & 0.18 & 0.74 & 1 & 1 & 1 & 0.05 & 0.06 & 0.06 & 0.36 & 1 & 1 \\
Chatterjee's $\psi$ & 0.05 & 0.07 & 0.19 & 0.87 & 1 & 1 & 0.04 & 0.05 & 0.04 & 0.11 & 0.66 & 1 \\ \hline
\end{tabular}\label{table:est_power}
\end{center}
\end{table}

\subsection{Computational Efficiency}\label{subsec:comptime}

We have carried out comparative simulations to assess the time taken by the multiple well known measures, and the three $\kappa$ based measures. Our proposed method along with Bergsma's methods appear to be many times faster than several of the other methods. This was found to be so even without using any advanced computational techniques.

We also noted an interesting feature regarding the time taken to compute these measures when the underlying sample size (say $n$) remains the same. It appears that several of the well known measures of dependence are adversely affected as the dependence between $X$ and $Y$ increases. Typically in the literature the computations times are presented as a function of $n$. The proposed method based on $\kappa$ does not seem to suffer from such drawbacks, as far as the mean and standard deviations of the time taken (see Figure \ref{fig:comp_time}).

\begin{figure}[h!]
	\vskip5pt
	\centering
	\includegraphics[height=70mm, width=0.7\linewidth]{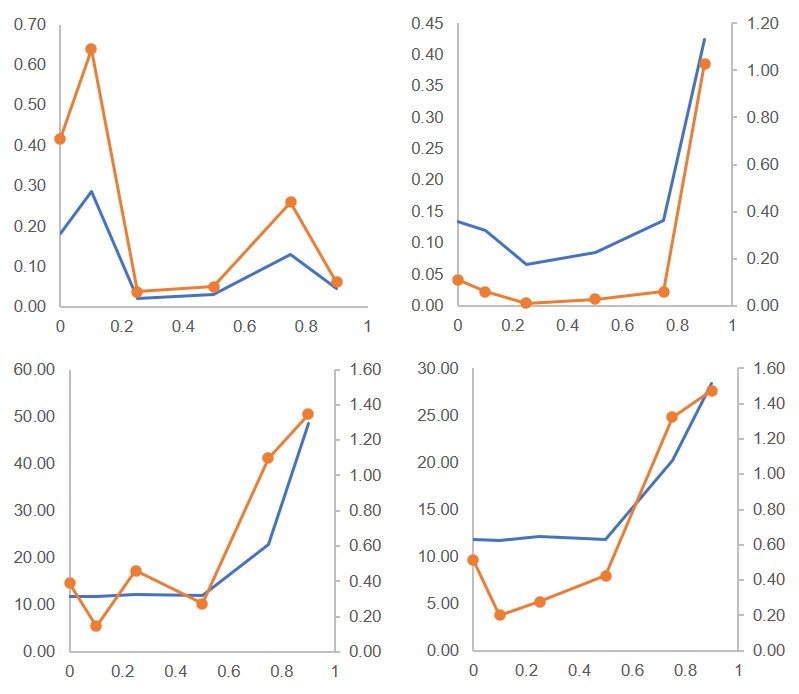}
	\caption{Estimated mean and standard deviations (over 10 replications) for time taken in seconds, to compute 100 values for four selected measures, based on 100 pairs of bivariate normal data. \newline 
	Summaries:  (1) Mean (solid line),  and (2) standard deviation (solid line with marker). 
	\newline Measures: $\kappa^*$ (top left), dCor (top right), Hoeffding's $D$ (bottom left), Bergsma's $\tau$ (bottom right). }
	\label{fig:comp_time}
\end{figure}

We now present efficiency data for computation of these measures of dependence mentioned earlier in and Table \ref{table:comp_time}.

\begin{table}[h!]
\footnotesize
\caption{
\footnotesize Estimated mean and standard deviations of time taken (in seconds) over 10 replications to compute 100 values each for the various measures of dependence based on 100 pairs of bivariate normal data with varying degree of dependence.}
\vskip3pt
\begin{center}
\begin{tabular}{l|cccccc|cccccc} \hline
Theta $\rightarrow$ &0&0.1&0.25& 0.5 & 0.75 & 0.9 & 0 & 0.1 & 0.25 & 0.5 & 0.75 & 0.9 \\ \hline
& \multicolumn{6}{c}{Mean time} & \multicolumn{6}{c}{Standard deviation of time} \\ \hline
$\kappa^{*}$ & 0.18 & 0.29 & 0.02 & 0.03 & 0.13 & 0.05 & 0.42 & 0.64 & 0.04 & 0.05 & 0.26 & 0.06 \\
$\tilde{\kappa}$ & 0.67 & 1.78 & 0.36 & 0.31 & 1.10 & 0.58 & 0.56 & 3.39 & 0.16 & 0.11 & 1.28 & 0.56 \\
$\hat \kappa$ & 0.54 & 0.83 & 0.30 & 0.31 & 0.67 & 0.96 & 0.49 & 1.06 & 0.04 & 0.05 & 0.40 & 1.73 \\
Copula Entropy & 0.95 & 1.15 & 0.54 & 0.52 & 1.37 & 1.15 & 0.91 & 1.27 & 0.05 & 0.04 & 1.24 & 1.31 \\
dCor & 0.13 & 0.12 & 0.07 & 0.08 & 0.14 & 0.42 & 0.11 & 0.06 & 0.01 & 0.03 & 0.06 & 1.03 \\
Hoeffding's $D$ & 11.87 & 11.89 & 12.21 & 11.99 & 22.75 & 48.46 & 0.39 & 0.15 & 0.46 & 0.27 & 1.10 & 1.35 \\
Bergsma's $\tau$ & 11.87 & 11.74 & 12.16 & 11.84 & 20.26 & 28.37 & 0.51 & 0.20 & 0.28 & 0.43 & 1.32 & 1.47 \\
Ball & 0.14 & 0.14 & 0.07 & 0.06 & 0.37 & 0.23 & 0.05 & 0.08 & 0.08 & 0.04 & 0.28 & 0.33 \\
QAD & 26.22 & 17.41 & 14.54 & 13.32 & 25.38 & 16.07 & 9.89 & 5.83 & 7.74 & 2.21 & 10.14 & 10.21 \\
Multilin. copula & 10.39 & 8.63 & 7.54 & 9.32 & 8.72 & 10.24 & 6.71 & 2.11 & 0.62 & 4.60 & 2.09 & 4.49 \\
Chatterjee's $\psi$ & 0.04 & 0.03 & 0.02 & 0.03 & 0.05 & 0.05 & 0.02 & 0.01 & 0.00 & 0.01 & 0.03 & 0.10 \\ \hline
\end{tabular}
\end{center}\label{table:comp_time}
\end{table}

\section{Discussion}\label{sec:disc}

An interesting measure of independence, namely $\kappa$, was introduced in \cite{bergsma2006new}. This becomes a special case of 
dCov, when the marginal distributions are one-dimensional. He also provided two estimates for $\kappa$, based on estimated $U$- and $V$-statistics. 

We have provided alternate descriptions of $\kappa$, and one of these leads us to a third (new) estimator of $\kappa$. The relation of $\kappa$ with a naturally arising dependence parameter in some standard bivariate distributions has been studied in the literature, we add the bivariate Gumbel distribution to this mix. We prove convexity of $\kappa$ that in case of BVN and in case of GBED-I $\kappa$ is an increasing function of $\theta$.

The asymptotic distributions of the two Bergsma's estimators under independence of the component variables was already known. While no distributional results were available in the more complex dependent case for the two original estimates, we have shown that the asymptotic distribution of all the three estimates are all Gaussian with the same parameters. In the independent case, we give a detailed proof for the asymptotic distribution of all the three estimators, which now are distributed as weighted sum of i.i.d. chi-square random variables.

We have carried out extensive simulations. This has brought out hitherto unexplored interesting properties of these estimators, such as the pattern of bias for small sample sizes in the $V$-statistics based estimator proposed in \cite{bergsma2006new}.

We have observed that the estimators are sensitive to the underlying dependence parameter values. This lends them to be used for testing independence. Through the simulation results, we explored the power of these three estimators along with other well known similar measures for testing independence. The porposed $\kappa^*$ estbalishes itself as a strong competitor both interms of power and computing efficiency.

Computationally it could be meaningful to explore dependency scenarios other than those covered here, for example dependence in presence of noise as suggested in \cite{chatterjee2021}. 

Future investigations could include theoretical investigations like obtaining an analytic formulae for the limiting variance in the dependence case for various parametric families. One could also investigate extensions of these estimators when underlying data is multivariate.

While multivariate data seems to be the flavour of the day, one of our primary motivation was situations, where we might have several univariate data sets where dependence is inherently true among the series (for example COVID 19 data) and where modelling it as a multivariate data with standard assumptions may not be feasible. In a recent work \cite{kappara2023} have successfully extended Bergsma's $\kappa$ in the spirit of other spatial measures, and applied it to the analysis of a spatial short time series COVID 19 data.

\vskip12pt


\begin{thebibliography}{12}
\providecommand{\natexlab}[1]{#1}
\providecommand{\url}[1]{\texttt{#1}}
\expandafter\ifx\csname urlstyle\endcsname\relax
  \providecommand{\doi}[1]{doi: #1}\else
  \providecommand{\doi}{doi: \begingroup \urlstyle{rm}\Url}\fi

\bibitem[Bergsma(2006)]{bergsma2006new}
W.~P. Bergsma.
\newblock A new correlation coefficient, its orthogonal decomposition and
  associated tests of independence (46 pages).
\newblock \emph{arXiv preprint math/0604627}, 2006.

\bibitem[Bose and Chatterjee(2018)]{bose2018u}
A.~Bose and S.~Chatterjee.
\newblock \emph{U-statistics, Mm-Estimators and Resampling}.
\newblock Springer, 2018.

\bibitem[Chatterjee(2021)]{chatterjee2021}
S.~Chatterjee.
\newblock A new coefficient of correlation.
\newblock \emph{Journal of the American Statistical Association}, 116\penalty0
  (536):\penalty0 2009--2022, 2021.

\bibitem[Gumbel(1960)]{gumbel1960bivariate}
E.~J. Gumbel.
\newblock Bivariate exponential distributions.
\newblock \emph{Journal of the American Statistical Association}, 55\penalty0
  (292):\penalty0 698--707, 1960.

\bibitem[Hoeffding(1948)]{hoeffding1948}
W.~Hoeffding.
\newblock A non-parametric test of independence.
\newblock \emph{The Annals of Mathematical Statistics}, 19\penalty0
  (4):\penalty0 546--557, 1948.

\bibitem[Kappara et~al.(2022)Kappara, Bose, and Bhattacharjee]{kappara2022}
D.~Kappara, A.~Bose, and M.~Bhattacharjee.
\newblock Assessing bivariate independence: Revisiting {B}ergsma's covariance
  (33 pages).
\newblock \emph{arXiv preprint Math/2212.08921}, 2022.

\bibitem[Kappara et~al.(2023)Kappara, Bose, and Bhattacharjee]{kappara2023}
D.~Kappara, A.~Bose, and M.~Bhattacharjee.
\newblock Measuring spatial association and testing spatial independence based
  on short time course data (19 pages).
\newblock \emph{arXiv preprint Math/2303.16824}, 2023.

\bibitem[Lee(2019)]{lee2019u}
A.~J. Lee.
\newblock \emph{U-statistics: Theory and Practice}.
\newblock Routledge, second edition, 2019.

\bibitem[Ma(2022)]{ma2022}
J.~Ma.
\newblock Evaluating independence and conditional independence measures (54
  pages).
\newblock \emph{arXiv:2205.07253v1}, 2022.

\bibitem[Serfling(2009)]{serfling2009approximation}
R.~J. Serfling.
\newblock \emph{Approximation Theorems of Mathematical Statistics}.
\newblock John Wiley \& Sons, 2009.

\bibitem[Sz\'{e}kely et~al.(2007)Sz\'{e}kely, Rizzo, and Bakirov]{szekeley2007}
G.~J. Sz\'{e}kely, M.~L. Rizzo, and N.~K. Bakirov.
\newblock Measuring and testing dependence by correlation of distances.
\newblock \emph{The Annals of Statistics}, 35\penalty0 (6):\penalty0
  2769--2794, 2007.

\bibitem[Tj{\o}stheim et~al.(2022)Tj{\o}stheim, Otneim, and
  Stove]{tjostheim2022}
D.~Tj{\o}stheim, H.~Otneim, and B.~Stove.
\newblock Statistical dependence: Beyond {P}earson's $\rho$.
\newblock \emph{Statistical Science}, 37\penalty0 (1):\penalty0 90--109, 2022.

\bibitem[Luke, Y.L.(1969)]{Luke1969}
Luke, Yudell L., and Yudell Leo Luke. 
\newblock The special functions and their approximations. 
\newblock Vol. 1. New York: Academic press, 1969.

\end{thebibliography}


\section*{Appendix}

\appendix
 \renewcommand{\thesection}{\Alph{section}}

 \section{$U$-statistics background}\label{sec:ustat}
	\noindent Let $X_{1},\ldots,X_{n}$ be independent and identically distributed (i.i.d.) random variables with distribution $F$. Let $h$ be a symmetric kernel of $m$ arguments. Then the $U$-statistic of order $m$ with kernel $h$ is defined as, 
	\begin{equation}
		U_n:={n\choose m}^{-1}\sum_{1\leq i_1<\cdots<i_{m}\leq n}h(X_{i_1},\ldots,X_{i_m}).
	\end{equation} 
	Suppose $\BE_{F} (h):=\BE_{F}[h(X_1, \ldots , X_m)]$ is finite. Then we can write 
	\begin{equation}
		U_n-\BE_{F} (h)={n\choose m}^{-1}\sum_{1\leq i_1<\cdots<i_{m}\leq n}h_m(X_{i_1},\ldots,X_{i_m}),
	\end{equation} 
	where
	$$h_m:=h-\BE_{F}(h),$$
	and successively for $1\leq d \leq m-1$,
	$$h_d(x_1,\ldots,x_d)=\BE_{F}\big[h_{d+1}(x_1,\ldots,x_d,X_{d+1})\big].$$
	Define the functions (projections) associate with $h$ or $U_n$ as (see \cite{serfling2009approximation}) 
	\begin{eqnarray}
		H_1(x_1)&:=&h_1(x_1) \label{firstproj}\\
		H_2(x_1,x_2)&:=&h_2(x_1,x_2)-H_1(x_1)-H_1(x_2)\nonumber\\
		&=&h_2(x_1,x_2)-h_1(x_1)-h_1(x_2)\nonumber\\
		H_3(x_1,x_2,x_3)&:=&h_3(x_1,x_2,x_3)-\sum_{1\leq i\leq 3}H_1(x_i)-\sum_{1\leq i<j\leq 3}H_2(x_i,x_j)\nonumber\\
		&\vdots&\nonumber\\
		H_m(x_1,\ldots,x_m)&:=&h_m(x_1,\ldots,x_m)-\sum_{1\leq i\leq m}H_1(x_i)-\sum_{1\leq i<j\leq m}H_2(x_i,x_j) \ldots\nonumber\\ &&  -\sum_{1\leq i_1<\cdots<i_{m-1}\leq m}H_{m-1}(x_i,\ldots,x_{i_{m-1}}).
	\end{eqnarray}
	Note that these are all symmetric kernels 
	with mean $0$. They also satisfy,
	\begin{equation}
		\BE_{F}[ H_2(x_1,X_2)]=0, \ldots , \BE_{F}[ H_m(x_1,\ldots,x_{m-1},X_m)]=0.\label{ortho}
	\end{equation}
	For $1 \leq d \leq m$ define,
	\begin{equation}
		S_{dn}:=\sum_{1\leq i_1<\cdots<i_{d}\leq n}H_d(X_{i_1},\ldots,X_{i_d}).
	\end{equation}
	If $\BE_{F}[h^2(X_1, \ldots , X_m) < \infty$, then the summands $H_d(\cdot)$ within the $S_{dn}, 1\leq d \leq m$, are all pairwise orthogonal. 
	\begin{lemma}
		Let $h=h(x_1, \ldots, x_m)$ be such that $\BE_{F}[h^2(X_1, \ldots , X_m)] < \infty$. Then,
		\begin{equation}
			U_n-\BE_{F} (h)=\sum_{d=1}^{m}{m \choose d}{n \choose d}^{-1}S_{dn}.\label{Hdecom}
		\end{equation}
		Suppose $R_n^{(c)}$ is defined by 
		\begin{equation}
			U_n-\BE_{F} (h)=\sum_{d=1}^{c}{m \choose d}{n \choose d}^{-1}S_{dn}+R_n^{(c)}, \ c\leq m.\label{Rn}
		\end{equation} 
		Then for every $c\leq m$, $R_n^{(c)}$ is a $U$-statistic with kernel $\sum_{j=c+1}^{m}H_j$, and $n^{c/2}R_n^{(c)} \rightarrow   0$ in probability.
	\end{lemma}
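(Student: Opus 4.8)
The plan is to establish the Hoeffding ($H$-) decomposition algebraically, identify the remainder as a genuine $U$-statistic, and then control its magnitude through a variance computation that exploits the orthogonality built into the projections $H_d$.

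First I would prove the pointwise kernel identity
$$h_m(x_1,\ldots,x_m)=\sum_{d=1}^{m}\sum_{1\leq i_1<\cdots<i_d\leq m}H_d(x_{i_1},\ldots,x_{i_d}),$$
which is nothing but the recursive definition of $H_d$ rearranged and read off at $d=m$. This follows by induction on $d$: the defining relation writes $h_d$ as $H_d$ plus the sum of all lower-order projections over sub-tuples, so that summing the projections over every subset reconstructs $h_d$, and in particular $h_m$. Next I would substitute this identity into $U_n-\BE_F(h)={n\choose m}^{-1}\sum_{1\leq k_1<\cdots<k_m\leq n}h_m(X_{k_1},\ldots,X_{k_m})$ and interchange the order of summation. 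For a fixed $d$-subset $\{l_1,\ldots,l_d\}\subseteq\{1,\ldots,n\}$, the term $H_d(X_{l_1},\ldots,X_{l_d})$ is picked up once for every $m$-subset containing it, i.e.\ exactly ${n-d\choose m-d}$ times. Collecting terms gives $U_n-\BE_F(h)={n\choose m}^{-1}\sum_{d=1}^m{n-d\choose m-d}S_{dn}$, and the elementary identity ${n\choose m}^{-1}{n-d\choose m-d}={m\choose d}{n\choose d}^{-1}$ yields (\ref{Hdecom}).

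The same bookkeeping, restricted to the orders $d\geq c+1$, shows that $R_n^{(c)}={n\choose m}^{-1}\sum_{1\leq k_1<\cdots<k_m\leq n}\psi_c(X_{k_1},\ldots,X_{k_m})$ with the symmetric kernel $\psi_c(x_1,\ldots,x_m)=\sum_{d=c+1}^{m}\sum_{1\leq i_1<\cdots<i_d\leq m}H_d(x_{i_1},\ldots,x_{i_d})$; that is, $R_n^{(c)}$ is a $U$-statistic of degree $m$ with kernel $\sum_{j=c+1}^{m}H_j$, as claimed. Finally, for the convergence I would compute $\BVar(R_n^{(c)})$. Because each $H_d$ is centered and completely degenerate (equation (\ref{ortho})), two evaluations $H_d(X_{i_1},\ldots,X_{i_d})$ and $H_d(X_{j_1},\ldots,X_{j_d})$ on distinct index sets are uncorrelated, and $H_d$, $H_{d'}$ are orthogonal whenever $d\neq d'$; hence $\BVar(S_{dn})={n\choose d}\sigma_d^2$ with $\sigma_d^2:=\BE_F[H_d^2]$, and all cross terms in $R_n^{(c)}$ vanish. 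This leaves $\BVar(R_n^{(c)})=\sum_{d=c+1}^{m}{m\choose d}^2{n\choose d}^{-1}\sigma_d^2$, whose leading contribution (from $d=c+1$) is of order $n^{-(c+1)}$. Since $\BE_F[R_n^{(c)}]=0$, we obtain $\BVar(n^{c/2}R_n^{(c)})=n^{c}\,O(n^{-(c+1)})=O(n^{-1})\to 0$, and Chebyshev's inequality delivers $n^{c/2}R_n^{(c)}\to 0$ in probability (indeed in $L^2$).

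The main obstacle is the orthogonality step: one must verify carefully, from the degeneracy relations (\ref{ortho}), that every off-diagonal covariance vanishes, both among evaluations of a fixed $H_d$ on different index sets and across projections of different orders. This is precisely what collapses the variance of $R_n^{(c)}$ to the single diagonal sum and forces the $O(n^{-(c+1)})$ rate; once it is in hand, the combinatorial and Chebyshev steps are routine.
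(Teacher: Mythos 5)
Your proof is correct, and it is the standard argument for the Hoeffding decomposition: expand $h_m$ as the sum of its projections $H_d$ over all sub-tuples, interchange the order of summation with the binomial count ${n-d\choose m-d}$, identify the tail $R_n^{(c)}$ as the $U$-statistic with kernel $\sum_{j>c}H_j$, and use complete degeneracy of each $H_d$ to collapse $\BVar(R_n^{(c)})$ to $\sum_{d>c}{m\choose d}^2{n\choose d}^{-1}\sigma_d^2=O(n^{-(c+1)})$, after which Chebyshev gives $n^{c/2}R_n^{(c)}\to0$ in probability. The paper states this lemma as background without proof (deferring to Serfling and Bose--Chatterjee), and the references prove it exactly as you do, so there is no divergence of method to report.
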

The representation given in \eqref{Hdecom} for $U$-statistics  	
is known as the \textbf{Hoeffding decomposition}.
	\begin{theorem}[UCLT]\label{UCLT}
		If $\BVar[h(X_1,\ldots,X_m)]<\infty$, then 
		$n^{1/2}(U_n-\BE_{F} h)\xrightarrow{D} N(0,m^2\delta_1)$, where $\delta_1=\BVar({H}_1(X_1)).$
	\end{theorem}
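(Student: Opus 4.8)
The plan is to reduce the statement to the classical i.i.d.\ central limit theorem via the Hoeffding decomposition \eqref{Hdecom}, whose two crucial features---orthogonality of the projection kernels and negligibility of the higher-order remainder---have already been recorded above.

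First I would invoke \eqref{Rn} at level $c=1$, which reads
\[ U_n - \BE_{F}(h) = m\, n^{-1} S_{1n} + R_n^{(1)}, \qquad S_{1n} = \sum_{i=1}^{n} H_1(X_i), \]
where $H_1(x_1)=h_1(x_1)$ is the centered first projection. Scaling by $n^{1/2}$ gives
\[ n^{1/2}\big(U_n - \BE_{F}(h)\big) = m\, n^{-1/2}\sum_{i=1}^{n} H_1(X_i) + n^{1/2} R_n^{(1)}. \]

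Then I would analyze the two pieces separately. The summands $H_1(X_i)$ are i.i.d.\ with mean $0$ and finite variance $\delta_1 = \BVar(H_1(X_1))$; the finiteness follows from $\BVar[h]<\infty$ via the conditional Jensen inequality, since $H_1$ is a conditional expectation of $h_m$. The Lindeberg--L\'{e}vy CLT then yields $n^{-1/2}\sum_{i=1}^{n} H_1(X_i)\xrightarrow{D} N(0,\delta_1)$, so the leading term converges to $N(0,m^2\delta_1)$. For the remainder, the cited lemma (with $c=1$) gives $n^{1/2} R_n^{(1)}\xrightarrow{P} 0$. An application of Slutsky's theorem combines the two and delivers the limit $N(0,m^2\delta_1)$.

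The only substantive work lies in the negligibility of the remainder, and it is exactly here that the orthogonality of the $H_d$ enters: since $\BVar(S_{dn}) = \binom{n}{d}\BVar(H_d)$, the $d$-th term of the decomposition carries variance of order $n^{-d}$, so after the $n^{1/2}$ scaling every term with $d\ge 2$ contributes variance $O(n^{1-d})\to 0$. I expect this variance bookkeeping---already packaged into \eqref{Rn}---to be the main obstacle in a self-contained argument; granting it, the theorem is an immediate consequence of the classical CLT together with Slutsky's theorem.
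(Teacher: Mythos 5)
Your argument is correct and is exactly the route the paper intends: it states Theorem \ref{UCLT} as a classical result without proof, but the Hoeffding decomposition \eqref{Hdecom} and the remainder bound $n^{c/2}R_n^{(c)}\xrightarrow{P}0$ are set up immediately beforehand precisely so that the theorem follows, as you show, from the $c=1$ case plus the Lindeberg--L\'{e}vy CLT and Slutsky. Your side remarks (finiteness of $\delta_1$ via conditional Jensen, and the $O(n^{1-d})$ variance of the order-$d$ terms via orthogonality) are the standard justifications and are accurate.
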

	If $\delta_1=0$, then $H_1\equiv 0$, and the $U$ statistic is called degenerate. See \cite{bose2018u}. In this case, the limit in Theorem \ref{UCLT} is 
	degenerate. 
	In such cases 
	$U_n$ can be normalised differently to obtain a non-degenerate limit. 
	To explain this, let 
	$$L^2(F):=\big\{h: \mathbb{R}\to \mathbb{R},\ \  \int h^2(x) dF(x) < \infty\big\},$$
	\begin{equation*}
		L^2(F\otimes F):=\{h:\mathbb{R} \times \mathbb{R}\rightarrow \mathbb{R}, \int\int h^2(x,y)dF(x)dF(y)<\infty \}.
	\end{equation*}
	Fix $h\in  L^2(F\otimes F).$
	Define the operator $T_{h}:L^2(F) \to L^2(F)$ as 
	$$T_{h}f(z) :=\int h(x, y) f(y) dF(y), \hspace{0.3cm}f\in L^2(F). $$
	Then there exists eigenvalues $\{\lambda_k\}$ and 
	a complete orthonormal set of  
	eigenfunctions $\{g_k\}\subset L^2(F)$ 
	such that;
	$$T_hg_k=\lambda_{k}g_k, \hspace{0.1cm} \forall k,$$
	$$\int{g}^{2}_k(x) dF(x)=1, \hspace{0.3cm} 	\int g_k(x)g_l(x)dF(z)=0, \hspace{0.2cm} \forall k \neq l,$$
	and
	\begin{equation}\label{eq:spectraldecom}
		h(x,y)=\sum_{k=1}^{\infty}\lambda_kg_k(x)g_k(y),
	\end{equation}
	where the equality (\ref{eq:spectraldecom}) is in $L^2$ sense.
	That is, if $X_1$, $X_2$ are i.i.d.~$F$ then as $n\rightarrow\infty$, $$\mathbb{E}[h(X_1,X_2)-\sum_{k=1}^{n}\lambda_kg_k(X_1)g_k(X_2)]^2\rightarrow 0. $$ 
	We will refer to this as the \textit{spectral decomposition} of $h$.
	We can now state the well-known result on degenerate $U$-statistics.  
	\begin{theorem}\label{theo:standard_degen_u}
		Suppose $U_n$ is a mean zero $U$-statistics with kernel $h$ of order $m$ such that $\BE_{F} [h^2(X_1, \ldots , X_m)] < \infty$ and $H_1\equiv 0$. Then 
		\begin{equation}U_n=\dfrac{m(m-1)}{2} \dfrac{1}{{n\choose 2}} \sum_{1\leq i_1 < i_2 \leq n} H_2(X_{i_{1}}, X_{i_{2}})+\epsilon_n,
		\end{equation}
		where $n \epsilon_n\xrightarrow{P} 0$. 
		Further, $nU_n\xrightarrow{D} {m\choose 2}\sum_{j=1}^\infty \lambda_j(C_j-1)$ , where $\{\lambda_i\}$ are the
		eigenvalues of $T_{H_{2}}$ and $\{	C_j\}$ are i.i.d.~chi-square random variables with one degree of freedom. 
	\end{theorem}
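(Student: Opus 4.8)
The plan is to prove the two assertions in sequence, first extracting the quadratic leading term and then reading off the limit law from it. To begin, I would invoke the Hoeffding decomposition \eqref{Hdecom}. Since $U_n$ is assumed mean zero we have $\BE_F(h)=0$, and since $H_1\equiv 0$ the first summand $S_{1n}$ vanishes identically; hence the leading surviving term is the one with $d=2$. Writing the remainder as in \eqref{Rn} with $c=2$ gives
\begin{equation*}
U_n={m\choose 2}{n\choose 2}^{-1}S_{2n}+R_n^{(2)}={m(m-1)\over 2}{n\choose 2}^{-1}\sum_{1\le i_1<i_2\le n}H_2(X_{i_1},X_{i_2})+R_n^{(2)}.
\end{equation*}
Setting $\epsilon_n:=R_n^{(2)}$ and applying the stated bound $n^{c/2}R_n^{(c)}\xrightarrow{P}0$ at $c=2$ (so $n^{c/2}=n$) yields $n\epsilon_n\xrightarrow{P}0$, which is exactly the first assertion.

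Next, writing $W_n:={n\choose 2}^{-1}\sum_{i_1<i_2}H_2(X_{i_1},X_{i_2})$, we have $nU_n={m\choose 2}\,nW_n+n\epsilon_n$, so by Slutsky it suffices to show $nW_n\xrightarrow{D}\sum_k\lambda_k(C_k-1)$. The statistic $W_n$ is a degenerate $U$-statistic of order two, degeneracy holding because $\BE_F[H_2(x_1,X_2)]=0$ by the orthogonality relations \eqref{ortho}. I would then express the sum over ordered pairs through the full double sum, $2\sum_{i_1<i_2}H_2=\sum_{i\ne j}H_2=\sum_{i,j}H_2-\sum_i H_2(X_i,X_i)$, so that $nW_n$ differs from $\tfrac1n\sum_{i\ne j}H_2(X_i,X_j)$ only by prefactors tending to $1$ and a diagonal term controlled by the law of large numbers.

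The heart of the argument is the spectral decomposition $H_2(x,y)=\sum_k\lambda_k g_k(x)g_k(y)$ supplied just before the statement, where $\{g_k\}$ is orthonormal in $L^2(F)$ and $\BE_F g_k=0$ for every eigenfunction attached to a nonzero eigenvalue (this mean-zero property is forced by degeneracy, since constants lie in the null space of the self-adjoint operator $T_{H_2}$). Substituting the truncated kernel into the double sum gives, for each fixed $K$,
\begin{equation*}
{1\over n}\sum_{i\ne j}\sum_{k\le K}\lambda_k g_k(X_i)g_k(X_j)=\sum_{k\le K}\lambda_k\Big[\big(n^{-1/2}\textstyle\sum_i g_k(X_i)\big)^2-n^{-1}\sum_i g_k(X_i)^2\Big].
\end{equation*}
By the multivariate central limit theorem the vector $\big(n^{-1/2}\sum_i g_k(X_i)\big)_{k\le K}$ converges to a standard Gaussian vector $(Z_1,\dots,Z_K)$, its coordinates independent by orthonormality, while the law of large numbers sends $n^{-1}\sum_i g_k(X_i)^2\to 1$; the continuous mapping theorem then produces the truncated limit $\sum_{k\le K}\lambda_k(Z_k^2-1)$, with each $C_k:=Z_k^2$ distributed as $\chi^2_1$.

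The main obstacle is passing from this finite-$K$ convergence to the full series, that is, justifying the exchange of the limits in $K$ and $n$. I would resolve this by the standard $L^2$-truncation estimate: because $H_2\in L^2(F\otimes F)$ one has $\sum_k\lambda_k^2=\BE_F[H_2^2(X_1,X_2)]<\infty$, and the tail contribution $\tfrac1n\sum_{i\ne j}\sum_{k>K}\lambda_k g_k(X_i)g_k(X_j)$, being itself a mean-zero degenerate quadratic form, has variance of order $\sum_{k>K}\lambda_k^2$ uniformly in $n$; likewise $\sum_{k>K}\lambda_k(C_k-1)$ is $L^2$-small. Invoking the usual approximation-in-distribution lemma (take $n\to\infty$ first, then $K\to\infty$, with the uniform tail bound controlling the error) then closes the argument, giving $nW_n\xrightarrow{D}\sum_k\lambda_k(C_k-1)$ and hence $nU_n\xrightarrow{D}{m\choose 2}\sum_k\lambda_k(C_k-1)$.
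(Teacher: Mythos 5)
Your argument is correct. Note, however, that the paper itself offers no proof of this statement: it is quoted in Appendix A as a standard background result on degenerate $U$-statistics, with the reader referred to the cited monographs (Serfling; Lee; Bose and Chatterjee). What you have written is essentially the classical textbook proof of that result --- Hoeffding decomposition to isolate the $d=2$ term, spectral expansion of $H_2$, multivariate CLT on the truncated eigenfunction sums (using that eigenfunctions for nonzero eigenvalues are centered, since constants lie in the null space of $T_{H_2}$), and a uniform-in-$n$ variance bound of order $\sum_{k>K}\lambda_k^2$ on the spectral tail to justify interchanging the limits in $K$ and $n$. All the steps, including the one genuinely delicate point (the limit interchange), are handled correctly, so the proposal is a complete and valid proof of the stated theorem.
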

\section{Computation of eigenvalues of the kernel $h_F$}\label{sec:ev}
The eigensystem of the kernel $h_F$ is the solution to the integral equation,
\begin{equation}
	\lambda g(z) = \mathbb{E}h_F(z,Z)g(Z).\label{inteq}
\end{equation}
In general, this equation does not admit a closed form solution. 
For the case of discrete and continuous $F$, this is reduced to a simpler problem.
\subsubsection*{The eigensystem in the discrete case:}
Let $X$ be a discrete random variable with $P(X=x_m)=p_m$ and assume without loss of generality that $x_m<x_{m+1}$ for all $m=1,\ldots,t$.
\begin{lemma}[\cite{bergsma2006new}]
	With $c_m=(x_m-x_{m-1})^{-1}$ the non-zero eigenvalues and eigenvectors of $h_F$ are the solutions to the equations;
	\begin{eqnarray}
		\begin{aligned}
			p_1g(x_1)&=\lambda c_2[g(x_1)-g(x_2)] \\
			p_tg(x_t)&=-\lambda c_t[g(x_{t-1})-g(x_t)]\\
			p_mg(x_m)&=-\lambda [c_m[g(x_{m-1})-(c_m+c_{m+1})g(x_i)+c_{m+1}g(x_{m+1})], \hspace{0.5cm}   2\leq m \leq t-1.
		\end{aligned} \label{Discrete}
	\end{eqnarray}
\end{lemma}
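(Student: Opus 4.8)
The plan is to exploit the piecewise-linear structure of $h_F(\cdot,x_j)$ in its first argument and to turn the integral eigen-equation \eqref{inteq} into a system of slope-jump (second-difference) relations at the atoms. Since the operator acts on $L^2(F)$, the equation only constrains $g$ at the support points, so \eqref{inteq} reads $\lambda g(x_i)=\psi(x_i)$ for all $i$, where
\[
\psi(z):=\sum_{m=1}^{t} h_F(z,x_m)\,g(x_m)\,p_m,\qquad h_F(z,x_m)=-\tfrac12\big(|z-x_m|-g_F(z)-g_F(x_m)+g(F)\big).
\]
The first observation is that $\psi$ is continuous and piecewise linear in $z$ with kinks only at $x_1,\dots,x_t$, because both $|z-x_m|$ and $g_F(z)=\sum_\ell|z-x_\ell|p_\ell$ are. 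For a non-zero eigenvalue $\lambda$ I would therefore \emph{define} the extension $\tilde g(z):=\lambda^{-1}\psi(z)$ on all of $\mathbb{R}$; it is piecewise linear, agrees with $g$ at every atom (so $\psi$ is unchanged if $g$ is replaced by $\tilde g$), and satisfies $\lambda\tilde g(z)=\psi(z)$ as genuine functions of $z$.

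Next I would record two structural facts. First, $\psi$ is constant on $(-\infty,x_1)$ and on $(x_t,\infty)$: for $z<x_1$ one has $|z-x_m|=x_m-z$ and $g_F(z)=\BE[X]-z$, so the $z$-dependence cancels in each $h_F(z,x_m)$, and symmetrically for $z>x_t$. Hence $\tilde g$ has zero slope outside $[x_1,x_t]$. Second, the constant function is an eigenfunction with eigenvalue $0$, since $\BE_Z h_F(z,Z)=0$ for every $z$; as $T_{h_F}$ is self-adjoint on $L^2(F)$, any eigenfunction with $\lambda\neq0$ is orthogonal to the constants, giving $G:=\sum_m g(x_m)p_m=\BE[g(X)]=0$.

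The heart of the argument is to take the distributional second derivative in $z$ of $\lambda\tilde g(z)=\psi(z)$. Using $\partial_z^2|z-x_j|=2\,\delta(z-x_j)$ and $g_F''(z)=2\sum_\ell p_\ell\,\delta(z-x_\ell)$ one gets $\partial_z^2 h_F(z,x_m)=-\delta(z-x_m)+\sum_\ell p_\ell\,\delta(z-x_\ell)$, so that
\[
\psi''(z)=\sum_{m=1}^{t}\delta(z-x_m)\,p_m\big(G-g(x_m)\big)=-\sum_{m=1}^{t}p_m\,g(x_m)\,\delta(z-x_m),
\]
the last step using $G=0$. On the other side, since $\tilde g$ is piecewise linear with vanishing outer slopes, $\tilde g''$ is the sum over $m$ of (slope-jump at $x_m$)$\,\cdot\,\delta(z-x_m)$, where the slope on $(x_{m-1},x_m)$ is $c_m\big(g(x_m)-g(x_{m-1})\big)$. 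Matching the coefficient of each $\delta(z-x_m)$ in $\lambda\tilde g''=\psi''$ then reproduces exactly \eqref{Discrete}: the endpoint cases $m=1,t$ come from the one-sided (flat) outer slopes, and the interior case $2\le m\le t-1$ from the two-sided slope-jump formula.

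I expect the main obstacle to be the bookkeeping at the two endpoints — rigorously establishing that $\tilde g$ is flat outside $[x_1,x_t]$, so that the jump at $x_1$ is $c_2\big(g(x_2)-g(x_1)\big)$ rather than a two-sided difference, and pinning down that it is precisely the orthogonality $G=0$ that cancels the spurious constant term in $\psi''$. The interior relations are routine once the distributional identity is in place. Finally I would note the converse: reversing the steps (integrating the matched point masses twice, with the flat boundary conditions) shows that any solution of \eqref{Discrete} with $\lambda\neq0$ reconstructs an eigenfunction of $h_F$, which I would state but not belabor.
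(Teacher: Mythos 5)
Your proposal is correct. Note that the paper itself gives no proof of this lemma --- it is quoted from Bergsma (2006) with only sign/typo corrections to the $C$ matrix --- so there is no in-paper argument to compare against; what you have written is a genuine, self-contained derivation. The key steps all check out: $\psi(z)=\sum_m h_F(z,x_m)g(x_m)p_m$ is indeed continuous and piecewise linear with kinks only at the atoms and is constant outside $[x_1,x_t]$ (the $z$-dependence of $|z-x_m|$ and of $g_F(z)$ cancels there); the orthogonality $G=\sum_m g(x_m)p_m=0$ for $\lambda\neq 0$ follows from $T_{h_F}\mathbf{1}=0$ and self-adjointness; the distributional identity $\partial_z^2 h_F(z,x_m)=-\delta(z-x_m)+\sum_\ell p_\ell\,\delta(z-x_\ell)$ is right; and matching point masses in $\lambda\tilde g''=\psi''$ reproduces exactly the three displayed relations (with the slope jump at $x_1$ being $c_2(g(x_2)-g(x_1))$ and at $x_t$ being $-c_t(g(x_t)-g(x_{t-1}))$, owing to the flat outer pieces). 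The only place you are a little quick is the converse: two integrations leave an additive constant, so from $\lambda\tilde g''=\psi''$ plus flat boundary slopes you only get $\lambda\tilde g=\psi+c$; you then need to kill $c$ by integrating against $F$, using $\sum_m\psi(x_m)p_m=0$ together with $G=0$ --- and in the converse direction $G=0$ is not given a priori but must be rederived by summing the difference equations (the slope jumps telescope to zero). These are one-line fixes, so the argument stands.
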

In matrix notation we must solve the generalized eigenvalue problem 
\begin{equation*}
	D_pg=\lambda C g,
\end{equation*}
where $D_p$ is a diagonal matrix with $p_m$ on the main diagonal, $g$ is the eigenvector with corresponding eigenvalue $\lambda$ and $C$ is a matrix of coefficients $c_m$ from the above system, i.e.,\\
\[
C=\begin{pmatrix}
	c_2 & -c_2 & 0 & 0   & \\
	-c_2 & (c_2+c_3) & -c_3 & 0 &\ldots \\
	0 & -c_3 & (c_3+c_4) & -c_4 \\
	0 & 0 & -c_4 & (c_4+c_5)\\
	&   &     &          & \ddots\\
	\vdots&  &  &  &  &  & (c_{t-1}+c_t) & -c_t\\
	&  &  &  &  &  &  -c_t &c_t \\
\end{pmatrix}
\]
The equation given in\cite{bergsma2006new} appear to have some typographical errors in the signs 
in his $C$ matrix and the corresponding difference equations. We have made the required corrections in the above expressions. 
\subsubsection*{The eigensystem in the continuous case:}
\begin{lemma}[\cite{ bergsma2006new}]
	Suppose $F$ is strictly increasing on the support of the probability distribution and $f$ is the derivative of $F$. Let $g$ be the eigenfunction corresponding to the integral equation \eqref{inteq}.
	Let $y(x)=g(F^{-1}(x))$ and suppose $y$ is twice differentiable. Then any eigenvalue $\lambda$ and its corresponding eigenvector $y_k(x)$  
	are solutions for the equation
	\begin{equation}
		\frac{d}{dx}f[F^{-1}(x)]y'(x)+\lambda^{-1}y(x)=0, \label{Diffeqn}
	\end{equation}
	subject to the condition
	\begin{equation*}
		f[F^{-1}(x)]y'(x)\rightarrow 0 \hspace{0.2cm} \text{as} \hspace{0.2cm} x\downarrow0 \hspace{0.2cm}\text{or} \hspace{0.2cm} x \uparrow 1.
	\end{equation*}
\end{lemma}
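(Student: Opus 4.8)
The plan is to substitute the explicit form of the kernel $h_F$ from \eqref{eq:hdef} into the integral equation \eqref{inteq} and reduce it, by repeated differentiation, to a second-order ODE in $z$, which the change of variable $x=F(z)$ then casts into the stated self-adjoint (Sturm--Liouville) form. First I would record the key structural fact that $\BE_F[h_F(z,Z)]=0$ for every $z$; this is immediate from \eqref{eq:hdef} together with $\BE_F[g_F(Z)]=g(F)$. It says the constant function lies in the kernel of the operator $T_{h_F}$, so $T_{h_F}1=0$. Since $h_F$ is symmetric, $T_{h_F}$ is self-adjoint on $L^2(F)$, whence any eigenfunction $g$ with eigenvalue $\lambda\neq 0$ is orthogonal to constants: from $\lambda\langle g,1\rangle=\langle T_{h_F}g,1\rangle=\langle g,T_{h_F}1\rangle=0$ we get $\int g(w)\,dF(w)=0$.

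Using this orthogonality, the $g_F(z)$ and $g(F)$ contributions in \eqref{inteq} drop out, and the equation collapses to
$$\lambda g(z)=-\tfrac12\int |z-w|\,g(w)\,dF(w)+C,$$
where $C=\tfrac12\int g_F(w)g(w)\,dF(w)$ is a constant. Next I would differentiate this identity twice in $z$. Splitting the integral $\int|z-w|g(w)\,dF(w)$ at $w=z$ and applying Leibniz's rule, the first derivative equals $\int_{-\infty}^{z}g(w)f(w)\,dw-\int_{z}^{\infty}g(w)f(w)\,dw=2\int_{-\infty}^{z}g(w)f(w)\,dw$, where I use $\int g\,dF=0$; a second differentiation gives $2g(z)f(z)$. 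Hence $\lambda g''(z)+f(z)g(z)=0$, i.e. $g''(z)=-\lambda^{-1}f(z)g(z)$.

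Finally I would change variables. With $x=F(z)$, $z=F^{-1}(x)$, and $y(x)=g(F^{-1}(x))$, the chain rule gives $y'(x)=g'(z)/f(z)$, so $f[F^{-1}(x)]\,y'(x)=g'(z)$, and therefore $\frac{d}{dx}\!\left(f[F^{-1}(x)]\,y'(x)\right)=g''(z)/f(z)=-\lambda^{-1}g(z)=-\lambda^{-1}y(x)$, which is exactly \eqref{Diffeqn}. For the boundary condition, the first-derivative computation above yields $g'(z)=-\lambda^{-1}\int_{-\infty}^{z}g(w)f(w)\,dw$; this tends to $0$ as $z$ approaches the lower end of the support (i.e. $x\downarrow 0$) and also as $z$ approaches the upper end ($x\uparrow 1$, using $\int g\,dF=0$). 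Since $f[F^{-1}(x)]\,y'(x)=g'(z)$, the stated limits follow.

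The main obstacle I anticipate is the rigorous justification of the two differentiations under the integral sign with the non-smooth kernel $|z-w|$, and the verification that the boundary terms genuinely vanish; this is where control of the tails of $g(w)f(w)$ is needed and where the hypotheses ($F$ strictly increasing with density $f$, $y$ twice differentiable) are actually used. By contrast, the orthogonality-to-constants step is the clean conceptual shortcut that eliminates the $g_F$ and $g(F)$ terms, and the change-of-variables bookkeeping—though it must be carried out carefully so that the equation lands precisely in self-adjoint form—is routine.
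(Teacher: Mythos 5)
Your proof is correct. Note that the paper itself states this lemma without proof, attributing it to Bergsma (2006), so there is no in-paper argument to compare against; your derivation is the standard one. The key steps all check out: $T_{h_F}1=0$ plus self-adjointness gives $\int g\,dF=0$ for $\lambda\neq 0$, which kills the $g_F(z)$ and $g(F)$ terms and reduces the eigenequation to $\lambda g(z)=-\tfrac12\int|z-w|g(w)\,dF(w)+C$; two differentiations give $\lambda g''=-fg$; and the substitution $x=F(z)$ yields $f[F^{-1}(x)]y'(x)=g'(z)$ and hence the Sturm--Liouville form \eqref{Diffeqn}, with the boundary condition following from $g'(z)=-\lambda^{-1}\int_{-\infty}^{z}g\,dF$ together with $\int g\,dF=0$. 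The only caveats are the ones you already flag (justifying differentiation under the integral sign, which Cauchy--Schwarz and the standing finite-variance assumption handle), so nothing is missing.
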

\subsubsection*{Discrete approximation of the continuous case:}
For most 
distributions the 
equation (\ref{Diffeqn}) does not have a closed from solution. 
In such cases we use a 
discrete approximation as follows.
Let $X$ be a continuous random variable $X$ with distribution function $F$. For any (large) positive integer $t$, define a discrete approximation $X^{(t)}$ to $X$ as
\begin{equation*}
	P(X^{(t)}={x_m}^{(t)})=p_m=\frac{1}{t},\ \ \text{where}\ \ {x_m}^{(t)}=F^{-1}\big(\frac{m-(1/2)}{t}\big), \ m=1, \ldots, t.
\end{equation*}
Let $F^{(t)}$ be the distribution function of $X^{(t)}$. Then the eigen pair for the kernel $h_{F^{(t)}}$, obtained by solving the 
(\ref{Discrete}) given in the discrete case with the coefficients $c_m={({x_m}^{(t)}-{x_{m-1}}^{(t)})}^{-1}$, serves as an approximate eigen pair for $h_F$.
\section{Six bivariate distributions} \label{sec:bvd}

Standard univariate distributions can be extended to multivariate distributions in multiple ways.
For illustration we have chosen six distributions. 
We present the specific bivariate extensions used in our simulations.
\vskip5pt

\noindent {\bf Normal}: This is simply 
the bivariate normal distribution 
with zero means, unit variances and correlation denoted by $\theta$.
\vskip5pt

\noindent {\bf Uniform}:  
A pair of correlated uniform random variables $(X, Y)$ is 
generated 
by starting with two independent uniform variables $U$ and $V$. 
Consider an association parameter $\theta\in [-1,\ 1]$. Note that $\theta = \pm1$ indicate perfect correlation. Thus in that case the uniform pair $(X, Y)$ equals $(U, 1-U)$ or $(U, U)$. In other cases we draw another random number, say $W$, from Beta$(\alpha, 1)$, where the shape parameter $\alpha$ equals
$$ \alpha = \frac{1}{2} \left[ \sqrt{\frac{49+\theta}{1+\theta}} -5  \right].$$
Then we define the dependent pair $(X, Y)$ by $X=U$ and 
$$Y = \left\{
\begin{array}{ll}
	|W-X| & \mbox{if} \, \, \, V < 1/2 \\
	1-|1-W-X| & \mbox{if} \, \, \, V \geq 1/2.
\end{array} \right.
$$

\noindent {\bf Exponential}: 
The dependent exponential pair $(X, Y)$ is obtained as follows. Let $X$ be Exponential(1) random variable. Let $U$ be a uniform(0,1) random variable. Then define $Y$ as
$$Y = \left\{
\begin{array}{ll}
	Exponential(1 + \theta X))) & \mbox{if} \, \, \, E/(E + G)) < U \\
	Exponential(2 + \theta X)))  & \mbox{if} \, \, \, E/(E + G)) \geq U,
\end{array} \right.
$$
where $E = ((1 - \theta + \theta X)/\exp(X))/(1 + \theta X)$ and $G = ((\theta + \theta^2 X)/\exp(X))/((1 + \theta X)^2).$
\vskip5pt

\noindent {\bf Laplace}: $Z = (X, Y)$  is said to be bivariate standard Laplace, if their joint density is as follows.

$$
f(z) = \frac{2}{2\pi {|\Sigma|}^{1/2}} \left\{\frac{\pi}{2 \sqrt{2 z^T \Sigma^{-1} z}}\right\}^{1/2} \exp\big(\sqrt{-2 z^T \Sigma^{-1} z} \big)
$$
where $\Sigma$  is the matrix with unit diagonals and $\theta$ as off-diagonals.
\vskip5pt

\noindent {\bf Logistic}: Let ${U, V}$ be two correlated uniform random variables as defined above. Define $X = \log(U) - \log(1-U)$ and $Y = \log(V) - \log(1-V)$. Then $(X,Y)$ has bivariate logistic distribution. 
\vskip5pt

\noindent {\bf Chi-square}: Let ${U, V}$ be bivariate mean zero normal rvs with unit variance and correlation $\theta$. Define $X = U^{2}$ and $Y = V^{2}$.
Clearly, each has $\chi^{2}$ distribution and have correlation $\theta^2$ and $(X,Y)$ has bivariate Chi-square distribution.

\end{document}